\documentclass[11pt]{amsart}

\usepackage{amsmath,amssymb}
\usepackage{mathrsfs} 
\usepackage{enumerate}
\usepackage{appendix}
\usepackage{wrapfig}

\usepackage{tablefootnote}

\usepackage{pgf,tikz}
\usetikzlibrary{arrows}
\usepackage[all]{xy}
\usepackage{appendix}

\usepackage{float}

\usepackage{array,booktabs,arydshln,xcolor}

\usetikzlibrary {positioning}
\usetikzlibrary{arrows}
\usetikzlibrary{positioning}
\usepackage{multirow}
\usepackage{tikz-cd}

    \usetikzlibrary{patterns.meta}
\tikzstyle{stuff_fill}=[rectangle,fill=white,minimum size=1em]

\usetikzlibrary{decorations.pathreplacing}

\usepackage{multicol}

\usepackage{geometry} 
\geometry{a4paper,left=32mm,right=32mm,top=35mm,bottom= 35mm}

\usepackage{tikz-cd}
\tikzcdset{arrow style=tikz,diagrams={>=stealth}}

\RequirePackage{hyperref}
\hypersetup{pdfpagemode={UseOutlines},
bookmarksopen=true,
bookmarksopenlevel=0,
hypertexnames=false,
colorlinks=true, 
citecolor=teal, 
linkcolor=blue, 
urlcolor=magenta, 
pdfstartview={FitV},
unicode,
breaklinks=true,
}

\usepackage{array,multirow}

\newtheorem{theorem}{Theorem}[section]
\newtheorem{theoremintro}{Theorem}
\newtheorem{corollaryintro}[theoremintro]{Corollary}
\newtheorem{propintro}[theoremintro]{Proposition}
\newtheorem{lemma}[theorem]{Lemma}
\newtheorem{proposition}[theorem]{Proposition}
\newtheorem{corollary}[theorem]{Corollary}

\newtheorem*{theorem*}{Theorem}
\newtheorem*{ques*}{Question}
\newtheorem*{prop*}{Proposition}

\theoremstyle{definition}
\newtheorem{definition}[theorem]{Definition}
\newtheorem{example}[theorem]{Example}

\newtheorem{ques}[theorem]{Question}

\newtheorem*{definition*}{Definition}

\theoremstyle{remark}
\newtheorem{remark}[theorem]{Remark}

\numberwithin{equation}{section}

\setcounter{tocdepth}{1}

\renewcommand{\thesubsection}{\arabic{section}.\Alph{subsection}}

\bibliographystyle{alpha}

\title[Lower bounds on Dehn functions and QI rigidity of SOL\textsubscript{5}]{Dehn functions: computations, lower bounds, and the quasiisometric rigidity of SOL\textsubscript{5}}

\author{Ido Grayevsky}
\address{School of Mathematics, University of Bristol, UK}
\email{grayevsky@gmail.com}

\author{Gabriel Pallier}
\address{Univ. Lille, CNRS, UMR 8524 - Laboratoire Paul Painlevé, F-59000 Lille, France}
\email{gabriel.pallier@univ-lille.fr}

\thanks{The first author was partially funded by Israel Science Foundation grants ISF 2919/19, ISF 2990/21, ISF 1577/23, and the Royal Society grant NIF\textbackslash R1\textbackslash 242363. The second author was funded by the DFG 281869850 (RTG 2229). We gratefully acknowledge the financial support from the ConYS programme of the Karlsruhe House for Young Scientists at the KIT, and from the PEPS JCJC programme of the INSMI. The authors thank the Isaac Newton Institute for Mathematical Sciences for the support and hospitality during the programme {\em Operators, Graphs, Groups} when work on this paper was undertaken. This work was supported by EPSRC, Grant number EP/Z000580/1.  }

\thanks{}

\subjclass[2020]{Primary 20F65, 20F69; Secondary 	22E25, 22E60, 20J06.}

\date{\today}
\dedicatory{}

\begin{document}

\begin{abstract}
We establish distortion estimates in completely solvable Lie groups, using a sublinear bilipschitz retraction constructed by Cornulier, and interpolating between two theorems of Osin.
This provides new lower bounds on Dehn functions. Our second main result is the quasiisometric rigidity of $\operatorname{Sol_5}$ and its lattices. Together with a theorem of Peng, a key tool for the rigidity is the complete list of Dehn functions and dimensions of asymptotic cones of all simply connected solvable Lie groups of exponential growth up to dimension $5$, which we compute using Cornulier and Tessera's results.
\end{abstract}
\maketitle

\tableofcontents

\section{Introduction}
This study is motivated by the quasiisometric classification and rigidity of connected Lie groups. By a result of \cite{CornulierCones11}, this problem amounts to the quasiisometric classification and rigidity in the class of completely solvable groups, which are the closed subgroups of the upper triangular real matrix groups\footnote{These are also called real-triangulable, or split-solvable, in the litterature.}. Our focus here is on Dehn functions, a prominent quasiisometric invariant. In~\cite{CoTesDehn}, Cornulier and Tessera develop a rich theory for computing Dehn functions of completely solvable groups. Their work is remarkable in its completeness: for example, it allows us to determine the Dehn functions of all completely solvable groups that are of exponential growth and dimension up to 5 (see Proposition~\ref{PropIntro: Dehn functions of dim 4 5}).

This paper has two main novel contributions. We develop a theoretical tool that allows one to obtain lower bounds on the Dehn functions using distortions in central extensions of completely solvable groups. This tool is based on a sublinear bilipschitz retract between a group $G$ and the largest nilpotent quotient of $\rho_1(G)$ (see Section~\ref{sec: completely solvable groups} for details).  Our second contribution is a quasiisometric rigidity result for the group $\operatorname{Sol}_5$, isometric to a particular horosphere in $\mathbb H^2 \times \mathbb H^2 \times \mathbb H^2$. We obtain it by combining our Dehn function computations with a theorem of Peng~\cite{PengCoarseI,PengCoarseII}. 

\begin{remark}
    In a recent and closely related paper~\cite{productSBE}, we give several more applications to our Dehn functions computations. In particular, we combine these computations with our results in~\cite{productSBE} in order to distinguish between several quasiisometry classes of completely solvable groups. 
\end{remark}

\subsection{Completely solvable groups, distortions and Dehn functions}

\begin{definition}[Distortion]
Let $G$ be a simply connected Lie group, and let $X$ be a nonzero element of its Lie algebra $\mathfrak g$.
    The distortion function of a one-parameter subgroup $L= \{\exp(tX)\}$ in $G$ is the growth type of the function
    \[ \Delta_L^G(r) = \sup \{t: \exp tX \in B_G(r)    \}. \]
\end{definition}

\begin{definition}[Central depth]
Given $X \in \mathfrak s$ a nonzero vector in a Lie algera $\mathfrak s$, we call central depth of $X$ and write  
    $c_X = \sup\{  j \in \mathbf Z_{\geqslant 1} \cup \{\infty \} \colon X\in C^j \mathfrak s \}$, where $C^1\mathfrak s = \mathfrak s$ and $C^{j+1}\mathfrak s = [\mathfrak s, C^j \mathfrak s]$ for all $j \geqslant 1$.
\end{definition}

The theorem below relates distortion and central depth, interpolating between the main result of \cite{Osindistort} (which corresponds to the case when the group $S$ in the statement is nilpotent and central depths are all finite) and that of \cite{OsinExprad}; we use both theorems of Osin in the proof.

\begin{theoremintro}[Evaluating distortion in completely solvable groups]\label{thm:distort}
    Let $S$ be a completely solvable group with Lie algebra $\mathfrak s$. 
    Let $X \in \mathfrak s$ be nonzero.
    Let $L$ be the one-parameter subgroup generated by $X$ in $S$.
    \begin{enumerate}
        \item
        If $c_X < \infty$, and if $X$ is in a Cartan subalgebra (e.g. if $X$ is a regular or central element) then 
        \[ \Delta_L^S(r) \asymp  r^{c_X}. \]
        \item \cite{OsinExprad}\footnote{By \cite{OsinExprad} the converse of (2) in Theorem~\ref{thm:distort} is also true.} If $c_X =\infty$ then $\Delta_L^S$ is exponential.
    \end{enumerate}
\end{theoremintro}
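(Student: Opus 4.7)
The plan is to establish part (1) of the theorem; part (2) is quoted directly from \cite{OsinExprad}. I will split the argument into matching lower and upper bounds on $\Delta_L^S(r)$. The lower bound is a commutator estimate that holds whenever $c_X < \infty$, without the Cartan hypothesis, while the upper bound crucially uses the sublinear bilipschitz retract $\Phi : S \to N$ constructed in Section~\ref{sec: completely solvable groups}, where $N$ is the largest nilpotent quotient of $\rho_1(S)$.

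For the lower bound $\Delta_L^S(r) \gtrsim r^{c_X}$, I would expand $X$ as a linear combination of iterated brackets of length $c_X$ of elements of $\mathfrak s$, possible since $X \in C^{c_X}\mathfrak s$. Iterating the Baker--Campbell--Hausdorff commutator identity $[\exp(sY),\exp(tZ)] = \exp(st[Y,Z] + \text{higher-bracket terms})$, I would assemble $\exp(r^{c_X} X)$ as a bounded product of exponentials of elements of $S$-length $O(r)$. The higher-order corrections are handled by induction on the central depth: corrections of strictly larger depth are absorbed recursively, and those that fall into $C^\infty \mathfrak s$ are exponentially distorted by part (2) and thus cost only $O(\log r) = o(r)$ in $S$-length. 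This yields $\exp(r^{c_X}X) \in B_S(Cr)$, and hence $\Delta_L^S(r) \geq C' r^{c_X}$.

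For the upper bound $\Delta_L^S(r) \lesssim r^{c_X}$, I would apply $\Phi$ to reduce to the nilpotent case. Two properties are needed: first, that $\Phi$ maps the one-parameter subgroup $L$ to a bounded neighbourhood of a one-parameter subgroup $\bar L = \exp(\mathbf R \bar X)$ in $N$; second, that the central depth of $\bar X$ in the Lie algebra of $N$ equals $c_X$. Granted these, Osin's nilpotent distortion theorem \cite{Osindistort} gives $\Delta_{\bar L}^N(r) \asymp r^{c_X}$, and the sublinear bilipschitz property of $\Phi$ transfers this to $\Delta_L^S(r) \lesssim r^{c_X}$, since the additive sublinear error $o(r)$ is negligible against the polynomial growth $r^{c_X}$.

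The main obstacle will be verifying the depth-preservation claim in the upper bound. The Lie algebra of $N$ is obtained from $\mathfrak s$ by unwinding the semisimple part of the $\mathfrak h$-adjoint action on the exponential radical $\mathfrak u$, so a Cartan element $X \in \mathfrak h$ descends faithfully to $N$. One must then track how the lower central series of $\mathfrak s$ restricted to $\mathfrak h$ matches that of the nilshadow in order to conclude $c_{\bar X} = c_X$. The Cartan hypothesis is essential here: without it, components of $X$ across the decomposition $\mathfrak s = \mathfrak h \ltimes \mathfrak u$ could be killed or mixed by the nilshadow construction, altering the depth.
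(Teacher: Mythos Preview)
Your overall plan is sound, but you have misplaced the role of the Cartan hypothesis, and this leads you to overcomplicate the upper bound while underestimating what is needed for the lower bound.

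For the upper bound $\Delta_L^S(r) \preccurlyeq r^{c_X}$, there is no need to route through $\rho_1(S)$: the quotient map $\pi\colon S \to H = S/R_{\exp}S$ is already a Lie group homomorphism and, by Cornulier's theorem, is $O(\log)$-Lipschitz. It sends $L$ \emph{exactly} onto the one-parameter subgroup generated by $\pi(X)$ in the nilpotent group $H$, and depth is preserved for free: since $R_{\exp}\mathfrak s = C^\infty\mathfrak s \subseteq C^{c_X+1}\mathfrak s$, one has $C^j(\mathfrak s/C^\infty\mathfrak s) = C^j\mathfrak s/C^\infty\mathfrak s$, hence $c_{\pi(X)} = c_X$. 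Osin's nilpotent distortion theorem then gives the bound directly. No Cartan hypothesis is required here, and this is precisely what the paper does.

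In the paper, the Cartan hypothesis enters only for the \emph{lower} bound $\Delta_L^S(r) \succcurlyeq r^{c_X}$: it guarantees that Cornulier's $O(\log)$-Lipschitz section $f\colon H \to S$ can be chosen with $f(\exp(t\pi(X))) = \exp(tX)$ for all $t$, which transports Osin's nilpotent lower bound on $d_H$ back to $S$. Your commutator/BCH approach is a genuinely different route to this lower bound; if it works, it would remove the Cartan hypothesis altogether, a question the paper explicitly leaves open. But the error analysis is more delicate than you suggest: in a non-nilpotent group, conjugation by $\exp(rY)$ can inflate norms exponentially, so the $C^\infty\mathfrak s$-component of the BCH remainder may have size $e^{O(r)}$ rather than polynomial in $r$. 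Exponential distortion of $R_{\exp}S$ does absorb such a term with $O(r)$ word-length cost, but you must argue this uniformly over $R_{\exp}S$ (as a distorted subgroup), not one one-parameter subgroup at a time, and the recursive absorption of the finitely many intermediate-depth terms also needs to be set up carefully.
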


Distortion estimates of central extensions are known to have consequences for lower bounds on the Dehn functions in certain classes of groups. We adapt this principle here for polynomial lower bounds in simply connected Lie groups and deduce from Theorem~\ref{thm:distort} the following:

\begin{corollaryintro}\label{corIntro: distorion central extensions}
    Let $G$ be a completely solvable group with Lie algebra $\mathfrak{\mathfrak g}$. 
    Let
    \[ 0 \to \mathbf R \to \mathfrak s \overset{\pi}{\longrightarrow} \mathfrak g \to 0\]
    be a central extension.
    Let $X$ be a generator of $\ker \pi$ and assume it has finite central depth. Then $n^{c_X} \preccurlyeq \delta_G(n)$.
\end{corollaryintro}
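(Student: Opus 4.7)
The plan is to pass to the simply connected Lie group $S$ with Lie algebra $\mathfrak{s}$, turn the central extension of Lie algebras into a central extension of Lie groups
\[ 1 \to \mathbf{R} \to S \overset{\pi}{\longrightarrow} G \to 1, \]
and then apply the classical Gersten-type principle that distortion of the center controls from below the Dehn function of the quotient. Since $\ker \pi \subset \mathfrak{s}$ is central, the generator $X$ lies in the center of $\mathfrak{s}$, and in particular in every Cartan subalgebra of $\mathfrak{s}$. Therefore the hypothesis of part~(1) of Theorem~\ref{thm:distort} is satisfied, yielding $\Delta_L^S(r) \asymp r^{c_X}$, where $L$ is the one-parameter subgroup generated by $X$.

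Next I would extract loops in $G$ whose area forces the desired lower bound. Concretely, pick a compact symmetric generating set $\Sigma_S$ of $S$ whose image $\Sigma_G = \pi(\Sigma_S)$ generates $G$. For each $r > 0$, choose $t_r \asymp r^{c_X}$ with $\exp(t_r X) \in B_S(r)$: by definition of $\Delta_L^S$ this element is represented by a word $w_r$ of length $O(r)$ in $\Sigma_S$. Its projection $\pi(w_r)$ is a null-homotopic word of length $O(r)$ in $\Sigma_G$. Lifting any combinatorial filling of $\pi(w_r)$ in $G$ back to $S$, the top boundary lies in $\ker \pi \cong \mathbf{R}$ at the element $\exp(t_rX)$; since $\ker\pi$ is central, each relator contributes a uniformly bounded shift along the center, so the number of relators needed is at least $ct_r \asymp r^{c_X}$ for some constant $c>0$. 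This gives $\delta_G(r) \succcurlyeq r^{c_X}$.

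The main obstacle is making this last argument work cleanly in the Lie group setting rather than for discrete presentations. I would handle it by fixing a compact presentation of $S$ (with finitely many ``relators'' taken to be short null-homotopic loops in $S$, in the sense of Riemannian or combinatorial Dehn function for simply connected Lie groups) and by noting that, since $\ker \pi$ is central, each such relator maps under the projection ``along the $X$-direction'' to a bounded real number; thus the total $X$-coordinate of the lifted filling is bounded by a constant times its area. This is essentially the observation in Baumslag--Miller--Short and Gersten adapted to the Lie setting, which has been used in this form by Cornulier--Tessera; one can also formulate it purely in terms of the central $2$-cocycle defining the extension, bounding its integral over a filling disk in $G$ by a constant times the area. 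Once this principle is in place, combining it with Theorem~\ref{thm:distort}(1) yields the claimed bound $n^{c_X} \preccurlyeq \delta_G(n)$.
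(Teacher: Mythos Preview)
Your proposal is correct and follows the same overall structure as the paper: apply Theorem~\ref{thm:distort}(1) (noting, as you do, that a central element lies in every Cartan subalgebra) to get $\Delta_L^S(r)\asymp r^{c_X}$, and then invoke the ``distortion of a central kernel bounds the Dehn function below'' principle.

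For the second step the paper takes the differential-forms route you mention at the end rather than the combinatorial lifting argument you lead with. Concretely, the paper writes the central extension via a left-invariant closed $2$-form $\omega=d\alpha$ on $G$, produces loops $\gamma_r$ of length $O(r)$ with $\int_{\gamma_r}\alpha\asymp r^{c_X}$ (these are the projections of short paths in $S$ joining $1$ to $\exp(t_r X)$), and applies Federer's Stokes theorem for Lipschitz chains: since $d\alpha$ is left-invariant it is bounded, so $r^{c_X}\preccurlyeq\int_{\Delta}d\alpha\leq L\cdot\operatorname{Area}(\Delta)$ for any Lipschitz filling $\Delta$. This sidesteps the issue you flag of making the combinatorial Gersten argument precise for compactly presented Lie groups; your ``bounded shift per relator'' heuristic is exactly the combinatorial shadow of ``$d\alpha$ has bounded pointwise norm''.
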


\begin{remark}
    In the notations of Corollary~\ref{corIntro: distorion central extensions}, if $c_X=\infty$ then $\delta_G(n)$ is at least exponential; this is a special case of~\cite[Theorem 11.C.1]{CoTesDehn}.
\end{remark}

\begin{example}\label{ExampleIntro: Abels group}(See Section \ref{app: Example}).
    Let $G$ be a central product of Abels' second group \cite[Example 5.7.4]{Abels87} and a model filiform nilpotent group of class $3$.
    By Corollary \ref{corIntro: distorion central extensions} the Dehn function of $G$ is at least cubic; precisely one can show that
    \[ n^3 \preccurlyeq \delta_G(n) \preccurlyeq n^4 \log^8 (n) .\]
    The upper bound is due to Cornulier-Tessera \cite{CoTesDehn}. 
\end{example}

The previously known lower bound is due to Cornulier-Tessera; after making some estimates explicit, it is $n^3/\log^8(n)$, as a consequence of the following theorem.

\begin{theorem}[After Cornulier and Tessera {\cite{CoTesDehn}}, see Section~\ref{sec: comparing lower bounds}]\label{Introthm: Dehn bounds via rho_1}\label{thm: Dehn bounds via rho_1}
    Let $G$ be a completely solvable group, $N$ its largest nilpotent quotient. 
Then 
\begin{enumerate}
    \item Either, $G$ has an exponential Dehn function
    \item Or, there exists $e$ (depending on G) such that 
    \begin{equation}
    \label{eq:est-dehn}
        \delta_N(r)/\log^e(r) \preceq \delta_G(r) \preceq  r\cdot  \widehat {\delta_N}(r) \cdot \log^e(r),
    \end{equation}
    where in the right inequality, $\widehat {\delta_N}$ is any regular function larger than $\delta_N$. (A function $f$ is regular if there exists $\alpha>1$ such that $\frac{f(x)}{x^\alpha}$ is non-decreasing on $\mathbf R_{\geqslant 0}$; in all cases where $\delta_N$ is known it is already regular and one may take $\widehat{\delta_N}=\delta_N$.)
\end{enumerate}
Moreover, if $c$ is the nilpotency class of $N$, then one can take $e=2(c+1)$.
\end{theorem}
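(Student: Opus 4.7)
The plan is to present this as a repackaging of the Cornulier--Tessera theory \cite{CoTesDehn}, extracting each inequality from a separate branch of their toolkit and tracking the logarithmic overhead explicitly to pin down $e=2(c+1)$. The starting point is the dichotomy already present in \cite{CoTesDehn}: either the exponential radical of $G$ fails to be ``tame'' in their sense, in which case the Dehn function is exponential and we are in case (1), or it is tame and both a general upper bound and a general lower bound on $\delta_G$ apply, to be compared with $\delta_N$.

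For the upper bound, I would run the Cornulier--Tessera filling scheme relative to the projection $G\twoheadrightarrow N$. Given a loop $\gamma$ in $G$ of length $r$, first push it to a loop $\bar\gamma$ in $N$ and fill it there at cost $\widehat{\delta_N}(r)$; then lift this filling back to $G$ using a sublinear bilipschitz section of the quotient map $G \to N$ produced by the retraction mentioned in the introduction. The cost of the lift is an additional factor $r\cdot \log^e(r)$: the $r$ comes from length of the section over each 2-cell, and the $\log^e(r)$ from iteratively pushing fillings through central kernels in the descending central series of $N$, each step applying Theorem~\ref{thm:distort} and contributing $\log^{c+1}$ powers.

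For the lower bound, the idea is to exploit the sublinear bilipschitz retract $\phi \colon G \to N$ (using Cornulier's construction referred to in the abstract and in Section~\ref{sec: completely solvable groups}) which has logarithmic sublinear defect in $G$. Take a loop $\gamma$ in $N$ of length $\asymp r$ achieving $\delta_N(r)$, lift to a loop $\tilde\gamma$ in $G$ of comparable length, and consider an optimal filling of $\tilde\gamma$ in $G$ with $\delta_G(r)$ cells. Applying $\phi$ cell by cell yields a filling in $N$ of a loop at Hausdorff distance $O(\log r)$ from $\gamma$; each 2-cell of diameter $O(1)$ in $G$ maps to a cycle of diameter $O(\log r)$ in $N$, which one fills in $N$ at cost at most $(\log r)^{c+1}$ because $N$ is nilpotent of class $c$. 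Combining and closing up $\gamma$ against the perturbed loop at additional cost $r\cdot\log r$ yields $\delta_N(r) \preceq \delta_G(r)\cdot \log^e(r)$, i.e.\ the desired lower bound.

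The hard part, and where the proof needs care, is the exponent $e=2(c+1)$. On the lower-bound side one must verify that Cornulier's retract has genuinely logarithmic defect (not merely sublinear) and that filling cycles of diameter $\rho$ in $N$ costs at most $\rho^{c+1}$; on the upper-bound side one must control the product of the filling length and the per-cell lift cost when threading fillings through successive central extensions. The factor of two essentially records that logarithms appear both in the horizontal direction (cell diameter in $N$) and the vertical direction (distortion in the central extensions), and matching this bookkeeping against the precise statements in \cite{CoTesDehn} is the main task for Section~\ref{sec: comparing lower bounds}.
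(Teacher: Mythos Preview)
Your approach diverges from the paper's in a way that creates a real gap in the upper bound. The paper does not compare $G$ and $N$ directly; it factors through $G_1 := \rho_1(G)$. The logarithmic overhead comes \emph{entirely} from the $O(\log)$-bilipschitz equivalence between $G$ and $G_1$ (via Lemma~\ref{lem: Dehn bounds in terms of SBE-constants} and Corollary~\ref{cor: logSBE implies log distortion in Dehn functions}), while the comparison between $G_1$ and $N$ involves no logarithms at all: the lower bound $\delta_N \preccurlyeq \delta_{G_1}$ holds because $N$ is an honest Lie-group retract of $G_1$ (Proposition~\ref{prop: lower bound on retracts}), and the upper bound $\delta_{G_1}(n) \preccurlyeq n\cdot\widehat{\delta_N}(n)$ is imported as a black box from \cite[Theorem 10.H.1]{CoTesDehn}, after checking (Lemma~\ref{lemma: C_1 automatic generalized standard solvable}) that every group of class $(\mathcal C_1)$ is generalized standard solvable.

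Your direct lifting scheme for the upper bound does not go through. The section $f\colon N \to G$ produced by Cornulier's retract lands in a slice of $G$, so a filling of $\bar\gamma$ in $N$ lifts via $f$ to a disk bounding $f(\bar\gamma)$, not the original loop $\gamma$. The discrepancy $\gamma \cdot f(\bar\gamma)^{-1}$ lies essentially in the exponential-radical direction and there is no a priori reason it is cheap to fill in $G$; controlling this is exactly the content of \cite[Theorem 10.H.1]{CoTesDehn}, and it is where the extra factor of $r$ (not a power of $\log$) in the upper bound of \eqref{eq:est-dehn} comes from. Your appeal to Theorem~\ref{thm:distort} here is also misplaced: that result is about distortion of one-parameter subgroups, not about filling costs. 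Finally, your explanation of $e=2(c+1)$ does not match the actual mechanism: the factor of two arises from the shape of the SBE transfer estimate in Corollary~\ref{cor: logSBE implies log distortion in Dehn functions}, applied \emph{once} to the pair $(G,G_1)$, with $c+1$ entering as the degree of a filling pair for the nilpotent side --- not from separate ``horizontal'' and ``vertical'' logarithmic contributions accumulated along a central series.
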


As Example \ref{ExampleIntro: Abels group} shows, Corollary~\ref{corIntro: distorion central extensions} sometimes allows to get rid of the power of $\log$ factor in the leftmost term of \eqref{eq:est-dehn}.

\begin{remark}
    Cornulier and Tessera's estimates on the Dehn function are sharper than \eqref{eq:est-dehn} in some subclasses of completely solvable groups; see \cite{CoTesDehn} and Section \ref{sec: comparing lower bounds}. 
\end{remark}

\subsection{Quasiisometric rigidity of $\operatorname{Sol}_5$}
The group $\operatorname{Sol}_5$ (also named $G_{5,33}^{-1,-1}$ in \cite{Mubarakzyanov}), is the semidirect product $\mathbf R^2 \ltimes \mathbf R^3$ with diagonal action of the $\mathbf R^2$ torus, such that the three weights $\varpi_1,\varpi_2,\varpi_3 \in \operatorname{Hom}(\mathbf R^2, \mathbf R)$ are linearly independent and sum to zero. Among all the left-invariant metrics on this group, one makes it isometric to a hypersurface 
\[ \mathcal H = \{ (z_1,z_2,z_3) \in \mathbb H^2 \times \mathbb H^2 \times \mathbb H^2 \colon b_1(z_1) + b_2(z_2)+b_3(z_3) =0 \}  \]
where for $i=1,2,3$, $b_i$ is a horofunction only depending on the projection to the $i$\textsuperscript{th} factor. As such, it is a higher-rank generalization of the three-dimensional group $\mathrm{Sol}_3$ (or SOL) which admits the same description as a horosphere in $\mathbb H^2 \times \mathbb H^2$.

\begin{theoremintro}[{Propositions~\ref{prop:qi-rigidity}--\ref{prop:qi-rididity-C0}}]
\label{prop:qi-rigidity-intro}
The following hold: 
\begin{enumerate}
\item 
Let $G$ be a group of class $(\mathcal C_0)$, quasiisometric to $\operatorname{Sol}_5$. Then $G$ is isomorphic as Lie group to $\operatorname{Sol}_5$.
    \item 
    Let $\Gamma$ be a finitely generated group quasiisometric to $\operatorname{Sol}_5$. Then there is a finite-index subgroup $\Gamma_0$ in $\Gamma$ and a homomorphism $\Gamma_0 \to \operatorname{Sol}_5$ with finite kernel and whose image is a lattice in $\operatorname{Sol}_5$.
\end{enumerate}

\end{theoremintro}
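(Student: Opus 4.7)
My plan is to first establish part (1), using the Dehn function and the dimension of the asymptotic cone as quasiisometry invariants in order to isolate $\operatorname{Sol}_5$ among the dimension-$5$ completely solvable Lie groups of exponential growth, and then to deduce part (2) by combining part (1) with Peng's quasiisometric rigidity theorem~\cite{PengCoarseI,PengCoarseII}.

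\textbf{Part (1).} Let $G$ be of class $(\mathcal{C}_0)$ and quasiisometric to $\operatorname{Sol}_5$. I would first reduce to the completely solvable case via~\cite{CornulierCones11}, which associates to $G$ a completely solvable companion in the same quasiisometry class; hence we may assume $G$ itself is completely solvable. Quasiisometry preserves exponential growth and the topological dimension of the asymptotic cone, and the latter equals the dimension for a simply connected completely solvable Lie group of exponential growth, so $G$ is of dimension $5$ and exponential growth. I would then invoke Proposition~\ref{PropIntro: Dehn functions of dim 4 5}, which lists the Dehn function of every completely solvable Lie group of exponential growth and dimension at most $5$, and check that $\operatorname{Sol}_5$ is the unique entry in that list whose Dehn function \emph{and} asymptotic cone structure match those of $\operatorname{Sol}_5$ itself, forcing $G \cong \operatorname{Sol}_5$ as a Lie group.

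\textbf{Part (2).} Let $\Gamma$ be a finitely generated group quasiisometric to $\operatorname{Sol}_5$. The group $\operatorname{Sol}_5 = \mathbf R^2 \ltimes \mathbf R^3$, whose rank-$2$ abelian factor acts with three real, pairwise non-proportional weights summing to zero, satisfies the hypotheses of Peng's theorem. Applied to the canonical quasi-action of $\Gamma$ on $\operatorname{Sol}_5$, Peng's theorem produces a uniform family of standard almost-isometries; passing to a finite-index subgroup $\Gamma_0$ and modding out a finite kernel, one extracts a homomorphism from $\Gamma_0$ into $\operatorname{Isom}(\operatorname{Sol}_5)$ whose image acts cocompactly. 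A standard centralizer argument then yields the desired homomorphism into $\operatorname{Sol}_5$ itself with finite kernel and lattice image.

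\textbf{Main obstacle.} The hardest step is the uniqueness in (1): pinning down $\operatorname{Sol}_5$ unambiguously among the dimension-$5$ completely solvable Lie groups by coarse invariants alone. The Dehn function alone is unlikely to distinguish every pair on the Mubarakzyanov list, so I expect to combine it with the topological dimension of the asymptotic cone and finer data (such as the maximal dimension of an isometrically embedded flat, or the structure of the cone's bi-Lipschitz type). On the rigidity side, descending from Peng's structural statement to a homomorphism into $\operatorname{Sol}_5$ itself --- rather than into a compact extension or the full isometry group --- is routine but requires tracking the automorphism group carefully.
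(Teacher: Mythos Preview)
Your overall strategy is close to the paper's, but there is a genuine gap in Part~(1) and a difference in route in Part~(2).

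\textbf{The gap in Part~(1): unimodularity.} Cone dimension together with the Dehn function, even supplemented by the homotopy type of the asymptotic cone, does \emph{not} isolate $\operatorname{Sol}_5$. Within the family $G_{5,33}^{\alpha,\beta}$ alone, every group with $\alpha,\beta<0$ has cone dimension~$2$, quadratic Dehn function, and asymptotic cone bilipschitz to the same space $\mathbf T^3_{D(3)}$ (with nontrivial $\pi_2$). So your proposed invariants leave an entire continuous family undistinguished from $\operatorname{Sol}_5 = G_{5,33}^{-1,-1}$. The paper breaks this by bringing in \emph{unimodularity}: $\operatorname{Sol}_5$ is amenable and unimodular, hence geometrically amenable, and geometric amenability is a quasiisometry invariant; since for completely solvable groups geometric amenability is equivalent to unimodularity, any $G$ in $(\mathcal C_0)$ quasiisometric to $\operatorname{Sol}_5$ must be unimodular. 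This cuts the candidate list (dimension~$5$, cone dimension~$2$) down to seven groups, all but $\operatorname{Sol}_5$ having exponential Dehn function. Without this step your argument cannot close. (Incidentally, your claim that the topological dimension of the asymptotic cone equals the dimension of the group is false --- it equals the cone dimension, here~$2$; to recover $\dim G = 5$ one uses that the asymptotic Assouad--Nagata dimension equals the Lie group dimension for simply connected solvable groups.)

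\textbf{Part~(2): a different and more indirect route in the paper.} Rather than extracting a homomorphism directly from Peng's description of self-quasiisometries, the paper uses only the corollary that any finitely generated group quasiisometric to a group of class~$(\mathcal P)$ is virtually polycyclic. One then invokes the classical fact that a polycyclic group has a finite-index subgroup mapping with finite kernel onto a uniform lattice in some simply connected solvable Lie group~$G$; since~$G$ has a lattice it is unimodular, and since~$G$ is quasiisometric to $\operatorname{Sol}_5$ the same exhaustion argument as above forces $G\cong\operatorname{Sol}_5$. This avoids any delicate descent from $\operatorname{Isom}(\operatorname{Sol}_5)$ to $\operatorname{Sol}_5$ and reuses the Part~(1) computation verbatim. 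Your direct approach via almost-isometries may well be workable, but the ``standard centralizer argument'' you allude to is not entirely routine here and would need to be spelled out.
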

 \begin{remark}\label{rem:statement-of-qi-rigidity-intro}
    Passing to the finite index subgroup $\Gamma_0$ of $\Gamma$ is necessary in the quasiisometric rigidity statement  (2) of Theorem~\ref{prop:qi-rigidity-intro} above. See Remark~\ref{Rem: finite-index-subgroup}.
    \end{remark}

    The distinctive properties of $\operatorname{Sol}_5$ used in the proof of Theorem~\ref{prop:qi-rigidity-intro} is that, unlike all the other completely solvable Lie groups of the same dimension and cone-dimension, it is unimodular and has an exactly quadratic Dehn function. The fact that its Dehn function is quadratic was proved by Dru\c{t}u \cite[Theorem 1.1]{DrutuFillingSol} and Leuzinger-Pittet \cite[Corollary 2.1]{LeuzingerPittetQuadratic}. The fact that it is the only such group is a consequence of our computations of Dehn functions, which we sum up in the following proposition. 

\begin{propintro}\label{PropIntro: Dehn functions of dim 4 5}
    The simply connected solvable Lie groups of dimension less or equal five and exponential growth have Dehn function of growth type $n$, $n^2$, $n^3$, $n^4$ or $\exp(n)$ exactly, all given for the groups of dimension less than $5$ in Table~\ref{tab:groupsless5prop} and for the groups of dimension $5$ in Table~\ref{tab:Dehn-functions-5}.
    Moreover, for all these groups, either the Dehn function is exponential, or it is equal to that of the largest nilpotent quotient.
\end{propintro}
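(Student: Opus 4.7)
The plan is to reduce to a finite case analysis via the classification of solvable Lie algebras of dimension at most $5$, and then to apply Cornulier--Tessera's machinery together with Corollary~\ref{corIntro: distorion central extensions} to each case. First I would invoke~\cite{CornulierCones11} to replace each simply connected solvable Lie group $G$ of exponential growth by its completely solvable companion $G'$, which is quasiisometric and hence has the same Dehn function and the same largest nilpotent quotient $N$. This reduces the statement to a finite list of completely solvable Lie algebras of dimension $\leqslant 5$ with exponential growth, which can be extracted from Mubarakzyanov's classification (already used in the paper to name $\operatorname{Sol}_5$). The dimension $\leqslant 4$ case is short and essentially classical, so the bulk of the work is the $5$-dimensional list summarised in Table~\ref{tab:Dehn-functions-5}.

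For each group in the list I would proceed as follows. Compute $N=G/[\mathfrak g,\mathfrak g]^{\mathrm{nil}}$ via its Lie algebra, and test whether $G$ has a vector of infinite central depth in some central extension; by Theorem~\ref{thm:distort}(2) and~\cite[Theorem 11.C.1]{CoTesDehn} this is exactly the case where $\delta_G$ is exponential, which leads to the class $\exp(n)$ entries in the tables. In the remaining (``standard solvable'', or more precisely $(\mathcal C_0)$ or tame) cases, apply Theorem~\ref{Introthm: Dehn bounds via rho_1} to bracket $\delta_G$ between $\delta_N/\log^e$ and $r \cdot \delta_N \cdot \log^e$. Since $N$ has dimension at most $5$ and nilpotency class at most $4$, its Dehn function is known to be one of $n$, $n^2$, $n^3$, or $n^4$ (these are the Heisenberg, filiform and related cases, where $\delta_N$ is a monomial and in particular regular), and one may take $\widehat{\delta_N} = \delta_N$.

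The remaining task is to remove the polylogarithmic slack in Theorem~\ref{Introthm: Dehn bounds via rho_1} and turn the estimate into the equality $\delta_G \asymp \delta_N$. The upper bound will follow by exhibiting a matching polynomial upper bound on a case-by-case basis: either $G$ itself is $SOL$-like with a quadratic Dehn function (as with $\operatorname{Sol}_5$ by Dru\c{t}u and Leuzinger--Pittet), or a specialised estimate in~\cite{CoTesDehn} (e.g.\ for 2-tame groups or for central products with filiform factors) removes the $r \cdot \log^e$ overhead. The lower bound is where Corollary~\ref{corIntro: distorion central extensions} does the work: in each case where $\delta_N \asymp n^c$, one identifies a central extension $0 \to \mathbf R \to \mathfrak s \to \mathfrak g \to 0$ whose kernel generator $X$ has central depth exactly $c$, yielding $n^c \preccurlyeq \delta_G(n)$. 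This matches the monomial upper bound and simultaneously shows the ``moreover'' part $\delta_G \asymp \delta_N$.

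Once the two tables are populated, the conclusion is just the observation that every entry in the non-exponential column is a $\delta_N$ with $N$ nilpotent of dimension $\leqslant 5$, hence of growth type $n$, $n^2$, $n^3$, or $n^4$. The main obstacle I expect is the bookkeeping in the middle step: verifying, for each of the few tens of $5$-dimensional completely solvable Lie algebras, the precise structure of $\mathfrak g/[\mathfrak g,\mathfrak g]^{\mathrm{nil}}$ and finding, when needed, a central extension witnessing the sharp lower bound via Corollary~\ref{corIntro: distorion central extensions}. The groups of class $(\mathcal C_0)$ that are not $2$-tame and those that fall just outside the scope of the sharper estimates in~\cite{CoTesDehn} are the delicate ones, and for these the distortion lower bound from Theorem~\ref{thm:distort}(1) is essential to close the gap against the matching upper bound.
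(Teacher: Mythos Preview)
Your overall plan---reduce to completely solvable groups via $\rho_0$, enumerate them using Mubarakzyanov's classification, and run a case analysis with Cornulier--Tessera's machinery---is exactly what the paper does in Appendix~\ref{appendix: copmutations}. But two parts of your execution diverge from the paper in ways that matter.

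First, your criterion for the exponential case is not the right one. Having a central extension whose kernel has infinite central depth corresponds (for groups in $(\mathcal C_1)$) to the $2$-homological obstruction; but Theorem~E of \cite{CoTesDehn} characterises exponential Dehn function as the presence of \emph{either} the SOL obstruction \emph{or} the $2$-homological obstruction, and these are not a priori equivalent. In the paper's tables, almost every exponential entry is detected by the SOL obstruction (quasi-opposite principal weights), not via central depth; the observation that the two obstructions happen to coincide in dimension $\leqslant 5$ is made only \emph{after} the computations, not used as an input. Your phrasing ``SOL-like with a quadratic Dehn function'' later on also conflates the SOL obstruction (which forces exponential) with groups such as $\operatorname{Sol}_5$ (which has quadratic Dehn function and no SOL obstruction).

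Second, you route the polynomial cases through Theorem~\ref{Introthm: Dehn bounds via rho_1} and then propose to kill the polylogarithmic slack uniformly with Corollary~\ref{corIntro: distorion central extensions} on the lower side. The paper does not do this: it applies the sharp criteria from~\cite{CoTesDehn} \emph{directly} to each group, avoiding the $\log$ factors altogether. Concretely: the linear entries come from hyperbolicity; the quadratic ones from the Azencott--Wilson criterion or the vanishing of the zero-weight Killing module (Theorem~F of~\cite{CoTesDehn}), with the lower bound being simply ``not hyperbolic'' (cone dimension $>1$); and only the six cubic/quartic entries ($G_{4,3}$, $G_{5,8}^{\gamma>0}$, $G_{5,10}$, $G_{5,22}$, $G_{5,29}$, $G_{5,38}$) use the distorted-central-extension lower bound, paired with the generalised-tame upper bound (Theorem~6.E.2 of~\cite{CoTesDehn}). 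Your plan to produce a depth-$c$ central extension for every group with $\delta_N\asymp n^c$ is extra work that the paper avoids, and for $c=1$ it gives nothing.
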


Our computations leading to Proposition~\ref{PropIntro: Dehn functions of dim 4 5} use Cornulier and Tessera's theorems in~\cite{CoTesDehn}. 

In relation with Theorem~\ref{Introthm: Dehn bounds via rho_1}, Proposition \ref{PropIntro: Dehn functions of dim 4 5} leads to ask whether the Dehn function of a completely solvable group is always either exponential or equal to that of its largest nilpotent quotient. According to Cornulier and Tessera (work in progress, \cite{CTSecondAbels}), Abels' second group does not satisfy this simple guess: its Dehn function is strictly faster than quadratic, although at most cubic.

More generally, we can reframe this into the language of isoperimetric spectrums. Let us introduce two sets of real numbers:
\[ \mathsf{LIP} = \{ \alpha \in [1,+\infty): \exists G\, \text{connected Lie}, \lim_{n \to + \infty} \frac{\log \delta_G(n)}{\log n}=\alpha \}, \]
\[ \mathsf{NIP_{\mathbf R}} = \{ \alpha \in [1,+\infty): \exists N\, \text{connected Lie nilpotent}, \lim_{n \to + \infty} \frac{\log \delta_N(n)}{\log n}=\alpha \} \]
and note that
\[ 
\mathbf N_{\geq 1} \subseteq \mathsf{NIP}_{\mathbf R} \subseteq \mathsf{LIP} \subseteq \{ 1 \} \cup [2,+\infty). 
\]

\begin{ques}
    Are there equalities in the chain of inclusions above?
\end{ques}

The discussion above on a possible improvement of Theorem~\ref{Introthm: Dehn bounds via rho_1} is now related to knowing whether there is equality in the second inclusion. Ultimately, 
this question is about how diverse the Dehn functions of nilpotent and solvable Lie groups can be, and if there is a difference. For a comparison it is known that the closure of $\mathsf{IP}$, the isoperimetric spectrum of finitely presented groups is equal to $\{ 1 \} \cup [2,+\infty)$ \cite{BBIsop}.

 \begin{remark}
     Notice that multiplicative differences of power of logarithms factors in the Dehn function are not seen in the isoperimetric spectrums.  Combining Theorem~\ref{th:Cornulier-thm} with Corollary~\ref{cor: logSBE and poly Dehn implies subexponential dehn} we see that the set $\mathsf{LIP}$ does not change if one considers, instead of all connected Lie groups, only Lie groups of the class $\mathcal{C}_1$ (see Section~\ref{Sec: class C1} for details). 
 \end{remark}
Finally, our computations raise interest in two families of groups with parameters. These families exhibit interesting behavior, which we present in Section~\ref{sec: particular families}. In particular we discuss their quasiisometric invariants, geometry, and what is missing in order to achieve quasiisometric classification within these families. 

\subsection{Organization of the paper}

Section~\ref{sec: completely solvable groups} introduces the theory of completely solvable Lie groups and the significance of sublinear bilipschitz equivalence (SBE) to this theory. Section~\ref{sec:distortion-dehn} is dedicated to the proof of the lower bounds estimates of Dehn functions. In Section~\ref{sec:proof-D} we prove the distortion estimates stated in Theorem~\ref{thm:distort}, and in Section \ref{sec: distortion in Extension to Dehn} we apply them to obtain the lower bounds. Section~\ref{sec: comparing lower bounds} compares our results with other lower bound methods. In Section~\ref{app: Example} we elaborate on Example~\ref{ExampleIntro: Abels group}, in which our tool improves on previously known techniques.

Sections~\ref{sec: computations and contribution} and~\ref{sec:qi-rigid} contain the concrete contribution towards the quasiisometric classification of low dimensional completely solvable Lie groups. Building on our Dehn function computations,  Section~\ref{sec: particular families} elaborates on particular families of completely solvable Lie groups which exhibit interesting behavior. In Section~\ref{sec:qi-rigid} we prove the quasiisometric rigidity of the group $G_{5,33}^{-1,-1}$ (Theorem~\ref{prop:qi-rigidity-intro}), building on the work of Peng~\cite{PengCoarseI},~\cite{PengCoarseII} for the part concerning finitely generated groups.

The Dehn function computations are presented in Appendix~\ref{appendix: copmutations}, where we list the groups up to dimension $5$ and compute their image by $\rho_1$ (Tables~\ref{tab:scr_expgrowth-dim4} and~\ref{tab:scr_expgrowth}) as well as their Dehn functions (Tables~\ref{tab:groupsless5prop} and ~\ref{tab:Dehn-functions-5}). Our computations are based on a list of criteria, mostly following~\cite{CoTesDehn}, which are explained in~Section~\ref{sec: Dehn functions}. Finally in Section~\ref{sec: computation examples} we give plenty of examples and explain how to preform the computations.

\subsection{Acknowledgements}
We thank Yves Cornulier for a useful discussion and pointing us to Theorems 6.E.2 and 10.H.1 in \cite{CoTesDehn}. The first author would like to thank Yair Glasner, Ori Parzanchevsky and Shai Evra for their interest and support. 
The second author would like to thank Claudio Llosa Isenrich for a useful discussion.

\section{Completely solvable groups}
\label{sec: completely solvable groups}

A simply connected solvable Lie group $G$ is quasiisometric to a completely solvable group $\rho_0(G)$, called the trigshadow of $G$~\cite{CornulierDimCone}. There are several possible, equivalent definitions for $\rho_0(G)$. Below we give that of Jablonski, building on Gordon and Wilson.

\begin{proposition}[{\cite[\S4.1 and \S4.2]{JabMax}, after \cite{GordonWilson}}]
    Let $G$ be a simply connected solvable Lie group. There exists a (possibly non-unique) left-invariant metric  $g_{\operatorname{max}}$ on $G$ whose isometry group contains a transitive completely solvable group $G_0$. Moreover, the group $G_0$ obtained in this way is unique up to isomorphism and it does not depend on $g_{\max}$.
\end{proposition}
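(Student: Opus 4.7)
The plan is to combine the Gordon--Wilson structure theorem for isometry groups of solvmanifolds with an intrinsic Lie-algebraic construction that identifies $G_0$ as the real shadow of $G$. For existence, I would start from an arbitrary left-invariant metric $g$ on $G$. By Gordon--Wilson, the connected component of the isometry group $\mathrm{Isom}^0(G,g)$ decomposes as a semidirect product $K \ltimes \widetilde{G}$, where $K$ is the compact isotropy and $\widetilde{G}$ is a simply connected solvable Lie subgroup acting simply transitively; $\widetilde{G}$ need not equal $G$ but is a ``modification'' of it. A suitable choice of inner product on $\mathfrak{g}$---one adapted to the real Jordan decomposition of $\mathrm{ad}(X)$ as $X$ ranges over a Cartan-type complement of the nilradical---produces a metric $g_{\max}$ for which the compact and completely solvable components of the adjoint action are decoupled, so that $\mathrm{Isom}^0(G, g_{\max})$ contains a completely solvable subgroup $G_0$ acting transitively. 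This is the place where Jablonski's modification theorem does the heavy lifting.

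For the uniqueness and independence parts, the decisive observation is that $G_0$ admits a canonical description purely at the level of the Lie algebra $\mathfrak{g}$, independent of any Riemannian datum. Concretely, write $\mathfrak{g} = \mathfrak{n} \rtimes \mathfrak{a}$ with $\mathfrak{n}$ the nilradical, and for $X \in \mathfrak{a}$ decompose $\mathrm{ad}(X)|_{\mathfrak{n}} = R_X + N_X + K_X$ into commuting real-semisimple, nilpotent, and skew-symmetric (compact) parts. Define a new bracket on the underlying vector space of $\mathfrak{g}$ by replacing each $\mathrm{ad}(X)|_{\mathfrak{n}}$ by $R_X + N_X$; this yields a completely solvable Lie algebra $\mathfrak{g}_0$, and I would take $G_0$ to be its simply connected Lie group. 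Because this construction depends only on the abstract Lie algebra structure of $\mathfrak{g}$, any completely solvable transitive subgroup produced by the Riemannian argument above can be matched to this canonical $G_0$ up to isomorphism, settling both claims at once.

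The matching step is the main obstacle: one must show that the simply transitive completely solvable subgroup coming out of the Gordon--Wilson/Jablonski analysis really is isomorphic to the Lie group of $\mathfrak{g}_0$. The cleanest route is to verify that, for the distinguished inner product defining $g_{\max}$, the skew-symmetric operators $K_X$ span a compact abelian subalgebra of $\mathfrak{so}(\mathfrak{g})$ that commutes with the $R_X + N_X$; this compact piece can then be ``absorbed'' into $K$ and stripped from the transitive group, leaving exactly $G_0$. One must also check that the modified bracket satisfies the Jacobi identity, which reduces to the commutation of $R_X, N_X$ with $K_Y$ for $X, Y \in \mathfrak{a}$---a consequence of the functoriality of the real Jordan decomposition together with the solvability of $\mathfrak{g}$.
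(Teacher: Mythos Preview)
The paper does not prove this proposition at all: it is stated with a citation to Jablonski and Gordon--Wilson and then used as a black box. So there is nothing to compare your argument against.

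That said, your sketch is in the right spirit but has a genuine structural gap. You write $\mathfrak g = \mathfrak n \rtimes \mathfrak a$ with $\mathfrak n$ the nilradical and then modify the bracket by stripping the compact part of $\operatorname{ad}(X)|_{\mathfrak n}$ for $X \in \mathfrak a$. But a solvable Lie algebra need not split over its nilradical; the paper itself records examples in dimension~5 ($\mathfrak g_{5,38}$, $\mathfrak g_{5,39}$) where no such complement $\mathfrak a$ exists. The correct construction of the real shadow does not use such a splitting: one takes the additive Jordan decomposition of $\operatorname{ad}(X)$ for \emph{every} $X\in\mathfrak g$, observes that the imaginary-semisimple part $X\mapsto K_X$ is itself a linear map into $\operatorname{Der}(\mathfrak g)$ (this is what requires care), and defines the new bracket by $[X,Y]_0 = [X,Y] - K_X(Y) + K_Y(X)$. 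Verifying that this is a Lie bracket and that the result is completely solvable is the substance of the Gordon--Wilson/Jablonski argument, and it does not reduce to the commutation statement you wrote down.

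A second, smaller issue: your existence argument says a ``suitable choice of inner product'' decouples the compact and completely solvable parts, but you do not specify it, and in fact one must first pass from $G$ to a group in its isometry class before such a decoupling is available; Jablonski's contribution is precisely to organise this passage. As written, your plan names the right ingredients but does not carry out either the construction or the matching.
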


\begin{definition}
    Let $G$ be a simply connected solvable Lie group. We define $\rho_0(G)$ as $G_0$. We say that a group is in the class $(\mathcal C_0)$ if $G = \rho_0(G)$, that is, if $G$ is completely solvable.
    \end{definition}

It is clear that the groups $G$ and $G_0$ are quasiisometric, being closed co-compact subgroups of the isometry group $\widehat G$ of $g_{\operatorname{max}}$. They are commable in the terminology of \cite{CornulierQIHLC}. The role of the group $\widehat G$ is played by the group denoted $G_3$ in Cornulier's treatment (\cite[Lemme 1.3]{Cornulieraspects}, summarizing \cite{CornulierDimCone}).

\begin{definition}
Let $G$ be a group in the class $(\mathcal C_0)$.
The exponential radical $\operatorname{R}_{\exp} G$ of $G$ is the smallest normal subgroup $N$ of $G$ such that $G/N$ is nilpotent.
\end{definition}
The exponential radical was named by Osin \cite{OsinExprad} as it is the subgroup of exponentially distorted elements in $G$ (together with $1$).
We call $\dim G/\operatorname{R}_{\exp} G$ the rank of $G$.
If $\widehat G$ is a real semisimple Lie group with trivial center, writing an Iwasawa decomposition $\widehat G = KAN$ and setting $G=AN$, we recover that the real rank of $\widehat G$ is the rank of $G$. More generally, the rank as defined here is still the dimension of one (or any) Cartan subgroup of $G$.

\subsection{Standard solvable groups}
When the group splits over its exponential radical there are some concrete advantages. In particular, Cornulier and Tessera prove that the following property has many implications for upper bounds on Dehn functions. 

\begin{definition}[After Cornulier and Tessera, {\cite[Definition 1.2]{CoTesDehn}}]
    Let $G$ be a group in $(\mathcal C_0)$, $\mathfrak n = \operatorname{R}_{\exp} \mathfrak g$. We say that $G$ is standard solvable if its exponential radical splits, the quotient $A=G/\operatorname{R}_{\exp} G$ is abelian, and the action of $\mathfrak a$ on $\mathfrak n / [\mathfrak n, \mathfrak n]$ has a trivial kernel.
\end{definition}

The conditions in the definition above are easier to verify than those which appear in~\cite{CoTesDehn}, and it is not entirely obvious why the two definitions are equivalent. Especially, in \cite{CoTesDehn} it is asked that the action of $\mathfrak{a}$ on every proper quotient of $\mathfrak n/[\mathfrak n, \mathfrak n]$ does not admit zero as a nontrivial weight, and $\mathfrak{n}$ is not a-priori required to be the exponential radical.
The following proposition establishes the equivalence of the two definitions (for real Lie groups; the original definition applies in a wider setting including non-Archimedean Lie groups as well), 

\begin{proposition}
\label{prop: UA standard solvable can take u to be ExpRad}
    Let $G$ be a completely solvable group. Assume that $G$ splits as
    $U\rtimes A$, where $A$ is abelian, $U$ is nilpotent, and the action of $A$ on $U/[U,U]$ has no fixed point. Then
    \[ U = [G,G] = \operatorname{R}_{\exp} G. \]
    Moreover, the action of $A$ on any non-trivial quotient of $U/[U,U]$ has no fixed point, hence $G$ is standard solvable in the sense of~\cite{CoTesDehn}.
\end{proposition}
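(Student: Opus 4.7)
The plan is to work entirely at the Lie algebra level. Write $\mathfrak{g} = \mathfrak{u} \rtimes \mathfrak{a}$ and let $\mathfrak{v}$ denote the Lie algebra of $\operatorname{R}_{\exp} G$. Since $G$ is completely solvable and $\mathfrak{a}$ is abelian, the action of $\mathfrak{a}$ on $V := \mathfrak{u}/[\mathfrak{u},\mathfrak{u}]$ is trigonalizable with real weights in $\operatorname{Hom}(\mathfrak{a},\mathbf{R})$, and the no-fixed-point hypothesis is equivalent to saying that $0$ is not among these weights.

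To obtain the first equality $\mathfrak{u} = [\mathfrak{g},\mathfrak{g}]$, the inclusion $\supseteq$ is clear since $\mathfrak{g}/\mathfrak{u}$ is abelian. For $\subseteq$, each nonzero-weight subspace of $V$ lies in the image of $\operatorname{ad}(a)$ for any $a \in \mathfrak{a}$ on which that weight does not vanish; absence of a zero weight then yields $V = \operatorname{im}([\mathfrak{a},\mathfrak{u}] \to V)$, so $\mathfrak{u} = [\mathfrak{a},\mathfrak{u}] + [\mathfrak{u},\mathfrak{u}] \subseteq [\mathfrak{g},\mathfrak{g}]$.

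The equality $\mathfrak{u} = \mathfrak{v}$ is the crux. The inclusion $\mathfrak{v} \subseteq \mathfrak{u}$ holds because $\mathfrak{g}/\mathfrak{u} \cong \mathfrak{a}$ is nilpotent. For the reverse, $\mathfrak{v}$ is an ideal of $\mathfrak{g}$ and $\mathfrak{g}/\mathfrak{v}$ is nilpotent, so $\operatorname{ad}(a)$ acts nilpotently on $\mathfrak{u}/\mathfrak{v}$ for every $a \in \mathfrak{a}$; hence only $0$ appears as a weight of $\mathfrak{a}$ on $\mathfrak{u}/\mathfrak{v}$. On the further quotient $\mathfrak{u}/(\mathfrak{v}+[\mathfrak{u},\mathfrak{u}])$ the weights are therefore both all zero (inherited from $\mathfrak{u}/\mathfrak{v}$) and, being a quotient of $V$, all nonzero, so this quotient vanishes and $\mathfrak{u} = \mathfrak{v} + [\mathfrak{u},\mathfrak{u}]$. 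Since $\mathfrak{v}$ is an ideal of $\mathfrak{u}$, bracketing with $\mathfrak{u}$ yields $[\mathfrak{u},\mathfrak{u}] \subseteq \mathfrak{v} + [\mathfrak{u},[\mathfrak{u},\mathfrak{u}]]$, and inductively $\mathfrak{u} \subseteq \mathfrak{v} + C^k \mathfrak{u}$ for every $k$; nilpotency of $\mathfrak{u}$ then forces $\mathfrak{u} = \mathfrak{v}$.

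Finally, for the ``moreover'' statement, functoriality of the generalized weight decomposition---available because the $\mathfrak{a}$-representation on $V$ is trigonalizable---guarantees that the weights appearing on any quotient of $V$ form a subset of those on $V$. In particular, any nonzero quotient of $V$ still has no zero weight, hence no fixed vector, which is the Cornulier--Tessera form of standard solvability. The main technical delicacy I anticipate is in the third paragraph: justifying that nilpotency of $\mathfrak{g}/\mathfrak{v}$ forces the action of $\mathfrak{a}$ on $\mathfrak{u}/\mathfrak{v}$ to be nilpotent, and then propagating the abelianization-level equality $\mathfrak{u} = \mathfrak{v} + [\mathfrak{u},\mathfrak{u}]$ up the lower central series to the equality of ideals $\mathfrak{u} = \mathfrak{v}$.
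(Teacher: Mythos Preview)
Your proof is correct. The argument for $\mathfrak u=[\mathfrak g,\mathfrak g]$ and the ``moreover'' clause are essentially identical to the paper's. The one genuine difference is in how you obtain $\mathfrak u=\operatorname{R}_{\exp}\mathfrak g$. The paper observes that the very computation you used to show $\mathfrak u\subseteq[\mathfrak g,\mathfrak g]$ actually gives $[\mathfrak a,\mathfrak u]+[\mathfrak u,\mathfrak u]=\mathfrak u$, i.e.\ $[\mathfrak g,\mathfrak u]=\mathfrak u$; hence $C^3\mathfrak g=[\mathfrak g,\mathfrak u]=\mathfrak u$ and inductively $C^k\mathfrak g=\mathfrak u$ for all $k\geqslant 2$, so the exponential radical---the stable term of the lower central series---equals $\mathfrak u$ in one line. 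Your route instead invokes the minimal-ideal-with-nilpotent-quotient characterization of $\mathfrak v$, compares weights on $\mathfrak u/(\mathfrak v+[\mathfrak u,\mathfrak u])$, and then runs a separate induction down the lower central series of $\mathfrak u$. This is sound, but the ``technical delicacy'' you flag in your third paragraph simply does not arise in the paper's argument: since $[\mathfrak g,\mathfrak u]=\mathfrak u$ is already established, there is no need to analyse $\mathfrak u/\mathfrak v$ at all.
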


\begin{proof}
    With notation as before, consider 
    \[ \mathfrak g = \mathfrak u \rtimes \mathfrak a, \]
    and denote $\rho: \mathfrak a \to \mathfrak{gl}(\mathfrak u/[\mathfrak u, \mathfrak u])$.
    In order to prove that $\mathfrak{u}\subset [\mathfrak{g},\mathfrak{g}]$, it is enough to prove that $\rho(\mathfrak a)(\mathfrak u/[\mathfrak u, \mathfrak u]) = \mathfrak u/[\mathfrak u, \mathfrak u]$.
    Indeed, if for $u \in \mathfrak  u$, we can write
    \[ u = [a, u'] + w\] where $u' \in \mathfrak u$ and $w \in [\mathfrak u, \mathfrak u] \subseteq [\mathfrak g, \mathfrak g]$, then $u$ is the linear combination of Lie brackets in $\mathfrak g$, so $u\in [\mathfrak g,\mathfrak g]$.
    Let us now prove the claim.
    Let $\mathfrak v = \mathfrak u/[\mathfrak u, \mathfrak u]$ and let $v \in \mathfrak v$. Write 
    $v = v_1 + \cdots + v_s$, where $v_{i} \in V^{\lambda_i}$, $\lambda_i \in \operatorname{Hom}(\mathfrak a, \mathbf R)$ nonzero. It is sufficient to prove that $v_i$ is in the image of $\mathfrak \rho(\mathfrak a)$ for every $i=1,\ldots,s$. Choosing $a_i\in \mathfrak a$ such that $\lambda_i(a_i) \neq 0$, we find that $\rho(a_i)_{\mid V^{\lambda_i}}$ has nonzero diagonal entries, hence it is surjective, so that $v_i \in \rho(a_i)(\mathfrak u/[\mathfrak u,\mathfrak u])$. The fact that $\rho(\mathfrak{a})(\mathfrak u/[\mathfrak u,\mathfrak u])=\mathfrak u/[\mathfrak u,\mathfrak u]$ also implies that in any non-trivial quotients of $\mathfrak u/[\mathfrak u,\mathfrak u]$, the zero weight of the $\mathfrak{a}$-action is trivial. 

    We proved that $\mathfrak u \subseteq [\mathfrak g, \mathfrak g]$. The converse containment follows from the fact that $\mathfrak g/\mathfrak u = \mathfrak a$ is abelian. Now, $C^3 \mathfrak g = [\mathfrak g, \mathfrak g]$ because of the following series of equalities 
    \begin{align*}
        [\mathfrak g, \mathfrak u] = [\mathfrak a + \mathfrak u, \mathfrak u] = [\mathfrak a, \mathfrak u] + [\mathfrak u, \mathfrak u] = \mathfrak u.
    \end{align*}
    and in view of the fact that $\operatorname{R}_{\exp} \mathfrak g$ is the limit of the central series, $\mathfrak u = \operatorname{R}_{\exp} {\mathfrak g}$.
\end{proof}

We can check that all the groups of class $(\mathcal C_0)$ of dimension less than $5$, save for one, are standard solvable (See Table~\ref{tab:groupsless5prop}).
The only exception is the group $G_{4,3}$, since $G_{4,3} /\operatorname{R}_{\exp} G_{4,3}$ is non-abelian. There are many non-standard solvable groups of dimension 5 and class $(\mathcal C_0)$ - see Appendix~\ref{sec: standard solvability}.

\subsection{The class $\mathcal{C}_1$ and sublinear bilipschitz equivalence}\label{Sec: class C1}
It may happen that $G\in \mathcal{C}_0$ does not split over its exponential radical. Cornulier~\cite{CornulierCones11} proved that nonetheless, such $G$ is always coarsely geometrically related to a group $\rho_1(G)$ which does. More precisely, there is always a is sublinear bilipschitz equivalence between $G$ and a group $\rho_1(G)$, where the latter splits over its exponential radical, and moreover the action of $\rho_1(G)/\operatorname{R}_{\exp} \rho_1(G)$ on $\operatorname{R}_{\exp} \rho_1(G)$ is $\mathbf{R}$-diagonalizable. This is the content of the definitions and theorems below.

\begin{definition}
Let $G$ be a completely solvable Lie group with exponential radical $N$.
Say that $G$ is in $(\mathcal C_1)$ if the extension
$1 \to N \to G \to G/N \to 1$ splits and the action of $G/N$ on $N$ is $\mathbf R$-diagonalizable.
\end{definition}

\begin{definition}
    Let $G$ be a completely solvable group with $N = \operatorname{R}_{\exp} G$, and set $H = G/N$.
Decompose $\phi = \operatorname{ad}\colon \mathfrak g \to \operatorname{Der}(\mathfrak n)$ into
\[ \phi = \phi_\delta + \phi_\nu \]
where $\phi_\delta$ is $\mathbf R$-diagonalisable and $\phi_\nu$ is nilpotent \cite{BbkiCartanAlg}.
Note that $\phi_{\delta}$ is zero when restricted to $\mathfrak n$, so that it is well-defined on $\mathfrak h$.
Let $\rho_1(G)$ be $N \rtimes H$, where $\mathfrak h$ acts on $\mathfrak n$ through $\phi_\delta$. We also write $\rho_1(\mathfrak g)$ for $\operatorname{Lie}(\rho_1(G))$.
\end{definition}

Let $X$ and $Y$ be  metric spaces. After fixing $x_0 \in X$ and $y_0 \in Y$, we denote by $\vert \cdot \vert$ the distance to the respective basepoints in $X$ and $Y$. We denote $\vert x_1\vert\vee \vert x_2\vert:=\max\{\vert x_1\vert,\vert x_2\vert\}$. 

Let $u \colon \mathbf R_{\geqslant 0} \to \mathbf R_{\geqslant 1}$ be a sublinear function, that is, 
\[ \lim_{r \to + \infty} \frac{u(r)}{r} =0. \]

\begin{definition}
Let $X$, $Y$, $x_0$, $y_0$ and $u$ be as above. Let $L \geqslant 1$.
    We say that $f\colon X \to Y$ is 
    \begin{itemize}
        \item $(L,u,x_0)$-Lipschitz if for every $x,x' \in X$,
    $d\big(f(x),f(x')\big) \leqslant Ld(x,x') + u (\vert x \vert \vee \vert x' \vert)   $
    \item $(L,u,x_0)$-expansive 
    if $L^{-1}d(x,x') - u (\vert x \vert \vee \vert x' \vert) \leqslant d\big(f(x), f(x')\big)$
    \item $(u,y_0)$-surjective if for every $y$ in $Y$, there is 
    $x \in X$ such that $d\big(y,f(x)\big) \leqslant u(\vert y \vert)$.
    \end{itemize}

\end{definition}

We say that $f$ is a $(L,u)$-bilipschitz embedding if it is $(L,cu,x_0)$-Lipschitz and $(L,cu,x_0)$-expansive for some $c\geqslant 0$.
If $f$ is additionally $(u,y_0)$-surjective for some $y_0$, then for all $y'_0 \in Y$ there is $c'>0$ such that it is $(c'u, y'_0)$-surjective; in this case, we say that $f$ realizes a $\big(L,O(u)\big)$-bilipschitz equivalence, or for short, a $O(u)$-bilipschitz equivalence between $X$ and $Y$. 
When no reference is made to $L$ and $u$ we will call a $(L,u)$-bilipschitz embedding a sublinear bilipschitz embedding.

\begin{theorem}[{Cornulier, \cite{CornulierCones11}}]
\label{th:Cornulier-thm}
Let $G$ be a completely solvable group, and let $H = G/\operatorname{R}_{\exp} G$.
Then
\begin{enumerate}[{\rm (1)}]
\item $G$ and $\rho_1(G)$ are $O(\log)$-bilipschitz equivalent.
    \item $H$ is a $O(\log)$-Lipschitz retract of $G$, more precisely: \label{item:cornulier-retract}
    \begin{enumerate}
        \item $\pi: G \to H$ is $O(\log)$-Lipschitz;
        \item \label{item:cornulier-retract-precise}
        Let $X$ be a nonzero vector in a Cartan subalgebra of $\mathfrak g$. Then there exists $f: H \to G$ (depending on $X$) which is $O(\log)$-Lipschitz and such that 
\begin{enumerate}
\setcounter{enumi}{2}
    \item 
    $\pi \circ f$ is $O(\log)$-close to the identity of $H$.
    \item \label{item:cornulier-retract-implicit}
$f \circ \pi (\exp (tX) ) = \exp(tX)$ for all $t$ in $\mathbf R$.
\end{enumerate}
    \end{enumerate}
\end{enumerate}

\begin{proof}
    Part (1) is stated by Cornulier \cite{CornulierCones11}.
    Parts (2a) and (2bi) express that $\pi\colon G \to H$ is a retract in the category of $O(\log)$-Lipschitz maps, which is also stated in \cite[Example 2.6]{cornulier2017sublinear} and the content of the proof can be found \cite[Theorem 4.4]{CornulierCones11}, where in the notation of \cite{CornulierCones11}, $f$ is the map $\psi^{-1}_{\mid V}$ (before the statement of Theorem 4.4). See also the few lines before  \cite[Lemma 5.2]{CornulierDimCone} where the map $f$ that we need is named $\psi$.
    To check part (2bii) we have to specify the construction of $f$; for this we refer to some parts of Cornulier's proof in \cite{CornulierCones11}. In Cornulier's construction, $\pi(X)$ is identified with an element $\xi$ in a subspace of $V$, the complement of the Lie algebra of $\mathfrak h\cap \operatorname{R}_{\exp} \mathfrak g$ in $\mathfrak h$ where $\mathfrak h$ can be taken to be any Cartan subalgebra of $\mathfrak g$, and then, $f \circ \pi(\exp(tX)) = \exp_G(t\xi)$. Since in our assumption, $X$ lies in a Cartan subalgebra $\mathfrak h$ of $X$, we can take the Cartan subalgebra in the construction of $f$ to be $\mathfrak h$, and then, with this choice, take $\xi$ to be equal to $X$. In this way, $f \circ \pi(\exp(tX))=\exp(tX)$ for all $t \in \mathbf R$.
\end{proof}
\end{theorem}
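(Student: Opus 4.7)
The plan is to treat the three claims in sequence, leveraging a common structural ingredient: the Jordan-type decomposition $\phi = \phi_\delta + \phi_\nu$ of the adjoint action of a Cartan subalgebra $\mathfrak h \subset \mathfrak g$ on $\mathfrak n = \operatorname{R}_{\exp}\mathfrak g$. The unifying heuristic is that nilpotent perturbations of the group law, sitting on top of $\mathbf R$-diagonalizable directions that already distort $\mathfrak n$ exponentially, only contribute logarithmically to the metric.

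For part (1), I would fix a Cartan subalgebra $\mathfrak h$ and a vector-space complement $\mathfrak h'$ to $\mathfrak h \cap \mathfrak n$ in $\mathfrak h$, then use exponential coordinates of the second kind $(n,h) \mapsto \exp(n)\exp(h)$ to realize both $G$ and $\rho_1(G)$ as the same analytic manifold $\mathfrak n \times \mathfrak h'$. The candidate SBE is the identity in these coordinates. The core estimate is to compare the two left-invariant Riemannian distances: the only difference between the group laws is encoded by $e^{t\phi_\nu(h)}$ versus the identity, so, for $\|h\|\asymp \|\log r\|$, the discrepancy between the push-forwards of the same tangent vector is polynomial in $\|h\|$, hence logarithmic in $r$. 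This propagates to the requisite $O(\log)$ control of the identity map by integrating along paths.

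For part (2a), the $O(\log)$-Lipschitz bound on $\pi$ is essentially a quotient statement: if $g_1,g_2\in G$ project to the same element of $H$ they differ by an element of $N$, which is exponentially distorted, so the $G$-distance bounds the $N$-distance from above logarithmically; conversely a geodesic in $G$ projects to a path in $H$, giving the upper bound on $d_H(\pi g_1,\pi g_2)$. For the section $f$ in (2b), I would pick a vector space splitting $\sigma : \mathfrak h/(\mathfrak h \cap \mathfrak n) \hookrightarrow \mathfrak h$ of the projection, view this as a splitting $H \to G$ through the exponential, and define $f$ on $\exp \bar Y$ by $\exp \sigma(\bar Y)$. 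To enforce (2b.iv), the key flexibility is the freedom in the choice of Cartan subalgebra: since $X$ is assumed to lie in some $\mathfrak h$, take that $\mathfrak h$ for the construction and pick $\sigma$ so that $\sigma\big(\pi(X)\big) = X$; then $f \circ \pi(\exp tX) = \exp(tX)$ holds by linearity. The $O(\log)$-Lipschitz property of $f$ and (2b.iii) follow from the same logarithmic comparison as in part (1), applied on the Cartan side rather than the radical side.

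The main obstacle will be part (1): turning the heuristic ``nilpotent errors are logarithmic'' into a uniform sublinear estimate valid at all base points, not just near the identity. This requires decomposing $\mathfrak n$ into generalized weight spaces for the action of $\mathfrak h$, tracking how $\phi_\nu$ mixes these spaces, and checking that the resulting non-$\mathbf R$-diagonalizable contributions get absorbed by the exponential stretching of $\mathfrak n$ along the Cartan direction. This is precisely the technical heart of Cornulier's construction in \cite{CornulierCones11}, and I would follow that approach rather than reinvent it.
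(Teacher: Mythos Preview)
Your proposal is correct and aligns with the paper's approach. The paper's own proof is almost entirely bibliographic: it attributes (1), (2a), and (2b.i) to specific places in Cornulier's work and only adds one substantive observation, namely that for (2b.ii) one may choose the Cartan subalgebra in Cornulier's construction to be the one containing $X$, so that the lift $\xi$ of $\pi(X)$ can be taken equal to $X$ itself. You make exactly this observation (your ``key flexibility is the freedom in the choice of Cartan subalgebra''), and your sketch of the underlying mechanism for (1) and (2a)---the Jordan decomposition $\phi=\phi_\delta+\phi_\nu$, exponential coordinates of the second kind, and the logarithmic cost of the nilpotent part $\phi_\nu$ against the exponential distortion coming from $\phi_\delta$---is precisely Cornulier's argument in \cite{CornulierCones11}, which you correctly identify as the source rather than something to redo.
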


\section{Distortion in completely solvable groups and Dehn function estimates}
\label{sec:distortion-dehn}

The theory of Dehn function and filling pairs is well known for finitely generated groups, and generalizes naturally for compactly presented groups. We refer to \cite{CoTesDehn} for the basic definitions and a detailed exposition of this subject in the context of compactly presented groups. 

Given two functions $f,g$ from $\mathbf R_{\geqslant 0}$ or $\mathbf Z_{\geqslant 0}$ to itself, we will write $f(r) \preccurlyeq g(r)$ if there is some constant $C>0$ such that $f(r) \preccurlyeq Cg(Cr+C)+Cr+C$. We also write $f(r) \asymp g(r)$ if $f(r) \preccurlyeq g(r)$ and $g(r) \preccurlyeq f(r)$.

\subsection{Proof of Proposition \ref{thm:distort}}
\label{sec:proof-D}

We now proceed with the proof of Proposition \ref{thm:distort}.
We first recall the setting.
    Let $G$ be a completely solvable group, let $H = G/\operatorname{R}_{\exp} G$ and let $\pi: G \to H$ be the projection. Let $X \in \mathfrak g \setminus \{0 \}$ be an element in a Cartan subalgebra, $L$ be the one-parameter subgroup generated by $X$, and let $c_X$ be its central depth.

Our goal is to evaluate the distortion of the subgroup generated by $X$.

    In the case where $c_X =\infty$, $X$ is in $\operatorname{R}_{\exp} G$, and the conclusion follows directly from \cite{OsinExprad}.

    In the case $c_X \neq \infty$, $\pi(L)$ is polynomially distorted with degree $c_X$ in $H$ by \cite{Osindistort}.
    This means that there exists a constant $M \geqslant 1$ so that for $t$ large enough,
    \begin{equation*}
        \frac{1}{M}t^{1/c_X} \leqslant d_H(\exp(t\pi(X)), 1) \leqslant Mt^{1/c_X}
    \end{equation*}
    (here we still denote $\pi$ the map $\operatorname{Lie}(\pi)$ for convenience).
    Now, by Cornulier's theorem \ref{th:Cornulier-thm},  the map $\pi\colon G \to H$  is a $O(\log)$-retract.
    This implies, on the one hand, that $\pi$ is $O(\log)$-Lipschitz. So, for some $\lambda >0$ and $c \geqslant 0$,
    $d_H(\exp(t\pi(X),1) \leqslant \lambda d_G(\exp(tX),1) +c \log t$.
    So $ d_G(\exp(tX),1) \geqslant \frac{1}{\lambda M} t^{1/c_X} - \frac{c}{\lambda} \log t,  $
    and then for $t$ large enough,
    \begin{equation}
        d_G(\exp(tX),1) \geqslant \frac{t^{1/c_X}}{2\lambda M}
    \end{equation}
    Then, for $r>0$ large enough,
    \begin{equation*}
        \sup \{t: \exp tX \in B_G(r) \} \leqslant  (2\lambda M)^{c_X} r^{c_X}.
    \end{equation*}
Thus $\Delta_L^G(r) \preccurlyeq r^{c_X}$.
On the other hand, using part (2b) in Theorem~\ref{th:Cornulier-thm}, there exists a $O(\log)$-lipschitz map $f\colon H \to G$ such that $f(\exp(t\pi(X))) = \exp(tX)$. Taking larger constants $\lambda$ and $c$ if needed, we have that
\begin{align*}
d_G(\exp(tX),1) & \leqslant \lambda d_H(\exp(t\pi(X)),1) + c\log d_H(\exp(t\pi(X)),1) \\
& \leqslant \lambda M t^{1/c_X} + \frac{c}{c_X} \log t,
\end{align*}
so that $\lambda M \Delta_L^G(r)^{1/c_X} + \frac{c}{c_X} \log \Delta_L^G(r) \geqslant r$, and then, $\Delta_L^G(r) \succcurlyeq r^{c_X}$.

\begin{remark}
    For our use of Theorem~\ref{thm:distort}, namely in Proposition~\ref{prop:dehn-lower-lie} below, the element $X$ will always lie in the centre of $\mathfrak{g}$, therefore will always lie in a Cartan subalgebra. In particular the assumption that $X$ lies in a Cartan subalgebra of $\mathfrak g$ does not impose any restrictions to us later on. We do not know whether this assumption is necessary in the statement of Theorem~\ref{thm:distort}.
\end{remark}

    \subsection{From the distortion in an extension to the Dehn function}\label{sec: distortion in Extension to Dehn}

\begin{proposition}
\label{prop:dehn-lower-lie}
    Let $G$ be a simply connected solvable Lie group.
    Let $\omega \in Z^2(G,\mathbf R)$; assume that in the central extension 
    \[ 1 \to \mathbf R \overset{\iota}{\longrightarrow} \widetilde G \overset{\pi}{\to} G \to 1 \]
    associated to $\omega$, the subgroup $L=\iota(\mathbf R)$ is distorted, and $\Delta_L^{\widetilde G}(n) \succcurlyeq n^k$.
    Then the Dehn function of $G$ has growth type at least $n \mapsto n^k$.
\end{proposition}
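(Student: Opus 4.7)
My proof proposal follows the classical ``central extension trick'' for producing Dehn function lower bounds, going back to work of Gersten and Bridson and adapted to the compactly presented setting in \cite{CoTesDehn}.

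The plan is to fix a compact symmetric generating set $S$ of $G$ and lift it to a compact generating set $\widetilde S$ of $\widetilde G$ (also including a small generator of the central $\mathbf R$). The distortion assumption gives, for each integer $n$, a real number $t_n$ with $\vert t_n\vert \succcurlyeq n^k$ such that $\exp(t_n X) \in B_{\widetilde G}(n)$, where $X$ generates $\ker \pi$. In other words, one can write $\exp(t_n X)$ as a word $\widetilde w_n$ of length at most $n$ in $\widetilde S$. Project to $G$: the word $w_n := \pi(\widetilde w_n)$ has length at most $n$ in the generators $S$, and since $\pi(\exp(t_nX))=1_G$, it represents the identity in $G$. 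Hence it admits a filling in the Cayley $2$-complex of $G$ with area at most $\delta_G(n)$.

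The heart of the argument is to show that the ``holonomy'' $t_n$ picked up when lifting $w_n$ to $\widetilde G$ is bounded by a constant times this filling area, so that $\delta_G(n) \succcurlyeq t_n \succcurlyeq n^k$. To see this I would represent the extension class by a smooth left-invariant closed $2$-form $\omega$ on $G$ (an element of $H^2(\mathfrak g,\mathbf R)$), or alternatively by a compactly supported Borel $2$-cocycle; then for any loop $\gamma$ based at $1$ in $G$ and any disk $D$ filling $\gamma$, the endpoint of the horizontal lift of $\gamma$ starting at $1_{\widetilde G}$ equals $\exp(cX)$ with $c = \int_D \omega$. Since $\omega$ is bounded on a compact fundamental domain for the relators in a compact presentation of $G$, each $2$-cell of a combinatorial filling contributes at most a constant to $\int_D \omega$, whence $\vert c\vert \leqslant C\cdot \mathrm{Area}(D)$. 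Applying this to $\gamma = w_n$ and $c = t_n$ gives $\vert t_n\vert \leqslant C\,\delta_G(n)$, which rearranges to $\delta_G(n) \succcurlyeq n^k$.

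The main technical obstacle is translating between the smooth and combinatorial pictures: ensuring that a van Kampen diagram in the Cayley $2$-complex of a compactly presented Lie group can be realised (or compared to) a piecewise-smooth disk on which one can honestly apply Stokes, and that the bound ``$\omega$ on one relator is bounded by a constant'' holds uniformly. I would handle this by choosing a compact presentation whose relators are loops of bounded diameter in $G$ (which exist for any simply connected Lie group) and invoking the fact, developed in \cite{CoTesDehn} (see the discussion of central extensions and Dehn functions there), that the pairing between the extension class and a filling is well-defined and bounded by the combinatorial area up to a multiplicative constant depending only on the chosen cocycle and presentation. Modulo this standard but nontrivial translation, the proof reduces to the two-line computation outlined above.
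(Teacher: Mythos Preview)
Your proposal is correct and follows the same overall strategy as the paper---the classical holonomy argument via a central extension. The execution differs in a way that is worth noting: the paper stays entirely in the Riemannian setting. It first proves a lemma on one-forms $\alpha$ with left-invariant $d\alpha$, using Federer's Stokes theorem for Lipschitz chains to bound $\int_\Delta d\alpha$ by a constant times $\operatorname{Area}(\Delta)$ for any Lipschitz disk $\Delta$; this gives a lower bound on the Riemannian filling function, and then the equivalence $\operatorname{Fill}_G \asymp \delta_G$ from \cite[Proposition~2.C]{CoTesDehn} is invoked only at the very end. The identity you call ``$c=\int_D \omega$'' is obtained directly from \cite[Lemma~3.1]{GMLIP} applied to the projected path. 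This route completely sidesteps the smooth-versus-combinatorial translation that you correctly identify as the main technical obstacle in your version: there is no need to realise a van Kampen diagram as a piecewise-smooth disk, nor to bound $\omega$ cell-by-cell over a compact presentation.
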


The main ingredient is the following lemma.

\begin{lemma}
\label{prop:dehn-lower-lie-d-bounded}
    Let $G$ be a simply connected solvable Lie group. Equip $G$ with a left-invariant Riemannian metric.
    Let $\alpha \in \Omega^1(G,\mathbf R)$ be a smooth one-form; assume that
    \begin{itemize}
        \item $d\alpha$ is left-invariant, and
        \item 
        There exists a family $(\gamma_n)_{n \geqslant 1}$ of piecewise smooth loops, with $\operatorname{length}(\gamma_n) \leqslant cn$ and $\int_{\gamma_n} \alpha \geqslant c'n^k$ for some positive constants $c,c'$.
    \end{itemize}
    Then the Dehn function of $G$ has growth type at least $n \mapsto n^k$.
\end{lemma}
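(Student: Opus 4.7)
The plan is to use Stokes' theorem to convert the lower bound on $\int_{\gamma_n} \alpha$ into a lower bound on the Riemannian filling area of $\gamma_n$, and then to transfer that estimate to the Dehn function via the standard equivalence between the Riemannian filling function and the combinatorial Dehn function for simply connected Lie groups.

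First, since $G$ is simply connected, I would fill each loop $\gamma_n$ by a piecewise smooth (or Lipschitz) disk $D_n \colon D^2 \to G$. By Stokes' theorem applied to $D_n$,
\[ \int_{D_n} d\alpha \;=\; \int_{\gamma_n} \alpha \;\geqslant\; c' n^k. \]
Next, I would exploit the assumption that $d\alpha$ is left-invariant: combined with the left-invariance of the Riemannian metric, this forces the pointwise norm of $d\alpha$ with respect to the induced inner product on $\Lambda^2 T^\ast G$ to be a constant $K<\infty$. Therefore
\[ c' n^k \;\leqslant\; \left\lvert \int_{D_n} d\alpha \right\rvert \;\leqslant\; K \cdot \operatorname{Area}(D_n), \]
so every piecewise smooth filling $D_n$ of $\gamma_n$ has area at least $(c'/K)\, n^k$. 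Taking the infimum over fillings yields the same lower bound on the Riemannian filling area of $\gamma_n$.

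Combining with $\operatorname{length}(\gamma_n) \leqslant cn$, I conclude that the Riemannian filling function of $G$ grows at least like $n^k$. The lemma then follows from the standard equivalence between the Riemannian filling function and the combinatorial Dehn function for simply connected Lie groups, as recorded in \cite{CoTesDehn}.

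The only subtleties I anticipate lie in the technical justification of Stokes' theorem for merely piecewise smooth (or Lipschitz) fillings, and in ensuring that one may work with loops based at a fixed point: both points are handled by standard approximation together with the left-invariance of $d\alpha$, which makes $\int_{\gamma} \alpha$ invariant under left translation of the loop $\gamma$ so that each $\gamma_n$ may be assumed based at the identity.
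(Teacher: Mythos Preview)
Your proposal is correct and follows essentially the same route as the paper: fill $\gamma_n$ by a Lipschitz disk, apply Stokes to get $\int_{\Delta_n} d\alpha \geqslant c' n^k$, bound this integral by the comass of the left-invariant form $d\alpha$ times the area, and then invoke the equivalence of the Riemannian filling function with the Dehn function from \cite{CoTesDehn}. The only point the paper makes more precise is the justification of Stokes' theorem for Lipschitz fillings, where it cites Federer's version for integral Lipschitz chains and flat cochains; your side remark about basing the loops at the identity is correct but not needed for the argument.
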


\begin{proof}
    We will prove that the filling area $\operatorname{Fill}(\gamma_n)$ of $\gamma_n$ is larger or equal than a constant times $n^k$. For every $n \geqslant 1$ let $\Delta_n$ be a Lipschitz disk in $G$ such that $\partial \Delta_n = \gamma_n$, by which we mean that $\Delta\colon B^2 \to G$ is a Lipschitz embedding of the Euclidean $2$-disk $B^2$ into $G$ such that $\Delta_{n \mid S^1}$ is a reparametrization of $\gamma_n$. $\Delta$ defines an integral Lipschitz chain in $G$, as defined in \cite[2.11]{Federer1974RealFC}. On the other hand, since it is smooth and has bounded exterior derivative, $\alpha$ represents a flat cochain as defined in \cite[4.6]{Federer1974RealFC}. By Federer's version of Stokes' theorem \cite[6.2]{Federer1974RealFC}, 
    \begin{equation}
    \label{eq:stokes}
        \int_{\Delta_n} d\alpha = \int_{\gamma_n} \alpha \geqslant c'n^k .
    \end{equation}
    Now $d\alpha$ is left-invariant, so there is a constant $L>0$ (namely, the point-wise comass norm of $d\alpha$ with respect to the Riemannian metric) such that
    \begin{equation}
    \label{eq:bounding-integral-by-area}
    \left\vert \int_{\Delta_n} d\alpha \right\vert \leqslant L \operatorname{Area}(\Delta_n) =: L \int_{B^2} \vert \Lambda^2 d\Delta (x) \vert dx   
    \end{equation}
    for all $n$. Combining \eqref{eq:stokes} and \eqref{eq:bounding-integral-by-area} yields 
    \begin{equation}
        \operatorname{Area}(\Delta_n) \geqslant \frac{c'}{L} n^k
    \end{equation}
    for all $n$.
    Since $\operatorname{length}(\gamma_n) \leqslant cn$, this finishes the proof that the filling area of $G$, defined by
    \[ \operatorname{Fill}_G(r) = \sup_{\gamma\colon S^1 \to G,\ \operatorname{length}(\gamma) \leqslant r} \inf \{ \operatorname{Area}(\Delta) \colon  \partial \Delta = \gamma  \} \]
    is at least of growth type $n \mapsto n^k$.
    Now $\delta_G(n) \succeq \operatorname{Fill}(n)$ by \cite[Proposition 2.C.1]{CoTesDehn}.\footnote{Actually \cite{CoTesDehn} gives the stronger result that $\delta_G(n) \asymp \operatorname{Fill}_G(n)$, however, the converse inequality is much more involved.}
\end{proof}

\begin{proof}[Proof of Proposition~\ref{prop:dehn-lower-lie} using Lemma~\ref{prop:dehn-lower-lie-d-bounded}]
    Let $G$ and $\omega \in Z^2(G,\mathbf R)$ be as in the statement of Proposition~\ref{prop:dehn-lower-lie}, and let $\alpha$ be a one-form on $G$ such that $d\alpha = \omega$. 
    For all $n$, let $\widetilde \gamma_n$ be a piecewise $C^1$ loop in $\widetilde G$ from $1$ to $\iota(n^k)$. By the assumption on the distortion, we can assume that there is a constant $c>0$ such that $\operatorname{length}(\widetilde \gamma_n) \leqslant cn$. We now let $\gamma_n$ be the projection of $\widetilde \gamma_n$ in $G$. This is a piecewise $C^1$-loop, with length $\leqslant cn$. We now claim that $\int_{\gamma_n} \alpha = n^k$. This follows from \cite[Lemma 3.1]{GMLIP}; there, the Lemma is stated for simply connected nilpotent Lie groups, but the nilpotency assumption is actually not used; the lemma holds for simply connected Lie groups.
\end{proof}

\begin{remark}
    Exponentially distorted central extensions also yield lower bounds on the Dehn function. For a group $G$ of class $(\mathcal C_1)$, the existence of an exponentially distorted central extension is equivalent to the $2$-homological obstruction of Cornulier and Tessera that we will discuss in the next section; see \cite[11.E]{CoTesDehn} on this equivalence.
\end{remark}

\begin{remark}
     Proposition~\ref{prop:dehn-lower-lie} is very close to \cite[Proposition 3.7]{GMLIP}.
     It is slightly stronger, even when restricted to nilpotent groups, since in \cite[Proposition 3.7]{GMLIP} there is the additional assumption that the nilpotent group $G$ should be of nilpotency class $k-1$. The difference comes from the different versions of Stokes theorem used. When $G$ is nilpotent and under the additional assumption that it has a lattice $\Gamma$, the result of Proposition \ref{prop:dehn-lower-lie} can be obtained by combinatorial arguments considering central extensions of $\Gamma$ instead of $G$; this method does not require any assumption on the nilpotency class of $G$ and $\Gamma$. It is described already in \cite{BW97}.
\end{remark}

\subsection{Example}\label{sec: example of central extension}

There are two completely solvable groups of dimension 4 and cone dimension 3. These are $G_{4,3}$, which we define below, and $A_{2} \times \mathbf R^2$. We will prove that the Dehn function of $G_{4,3}$ is cubic, while the Dehn function of $A_2 \times \mathbf R^2$ is quadratic. 

The Lie algebra of $G_{4,3}$ has a basis $(e_1, e_2, e_3, e_4)$ in which the nonzero Lie brackets are
$
[e_4,e_1]  = e_1$ and $
[e_4,e_3]  = e_2$.
(Our $e_4$ is the opposite of the corresponding notation in \cite{PateraZassenhaus}, for convenience; the others are the same.)
The derived subalgebra is the abelian ideal generated by $e_1$ and $e_2$. The next term in the central series (and exponential radical) is $\mathbf R e_1$, and the center is $\mathbf R e_2$.
Consider the dual basis $(\omega_1, \ldots, \omega_4)$ to $(e_1, \ldots e_4)$. The $2$-form $\omega_4 \wedge \omega_2$ is closed, since 
\[ d (\omega_4 \wedge \omega_2) = d\omega_4 \wedge \omega_2 - \omega_4 \wedge d\omega_2 = - \omega_4 \wedge \omega_3 \wedge \omega_4 = 0.\]
It is not exact, since $Z^2(\mathfrak g_{4,3}, \mathbf R)$ is spanned by $\omega_1 \wedge \omega_4$ and $\omega_3 \wedge \omega_4$.
Hence, there is a nontrivial central extension 
\[ 1 \to \mathbf R \to G \to G_{4,3} \to 1. \]
where $G$ is a $5$-dimensional, completely solvable group (this is $G_{5,10}$ on Table~\ref{tab:scr_expgrowth}). Moreover, the kernel of this extension is  cubically distorted, as we explain now. Let us write $\widetilde e_i$ such that $\widetilde e_5$ generates the kernel of the central extension, and $\widetilde e_i$ projects to $e_i$ in $\mathfrak g_{4,3}$ for $1\leqslant i \leqslant 4$. $\widetilde e_5$ lies in the third term of the descending central series of $\mathfrak g$, so that we can apply Proposition \ref{thm:distort}.

\subsection{Comparison with other known bounds on the Dehn functions}\label{sec: comparing lower bounds}

Theorem~\ref{Introthm: Dehn bounds via rho_1} is essentially proven by Cornulier and Tessera. Since they do not state it in this way, we will provide some explanations on  how to deduce it from \cite{CoTesDehn} using their tools. The main ingredient is \cite[Theorem 10.H.1]{CoTesDehn}.
The theorem essentially states that for a Lie group $G$, either the Dehn function is exponential, or it is well estimated (with error terms) by the Dehn function of the largest nilpotent quotient of a completely solvable group quasiisometric to $G$.

We present the ingredients and then assemble the proof. 

\subsubsection{Dehn functions and $O(\log)$-bilipschitz equivalence}
The Dehn functions of two groups that are $O(log)$-bilipschitz equivalent are equal up to a factor of a power of $\log$. We will use it for the pair $G$ and $\rho_1(G)$. The following statements are essentially~\cite[Corollary 3.C.2]{CoTesDehn}, formulated in a slightly more general way. We omit the proof, which is identical. 

\begin{lemma}\label{lem: Dehn bounds in terms of SBE-constants}
    Let $G$ and $H$ be two locally compact compactly presented groups, with filling pairs $(f_G,g_G)$ and $(f_H,g_H)$ respectively. Assume there is an $(L,u)$-bilipschitz equivalence $\phi:H\rightarrow G$. Then we have the following:
    \begin{enumerate}

   \item $f_H(n)\preccurlyeq f_G\big(nu(n)\big)\cdot f_H\circ u\circ g_G\big(nu(n)\big)$.
   
    \item  $g_H(n)\preccurlyeq g_H\circ u\circ g_G\big(nu(n)\big)+g_G\big(nu(n)\big)$.
    \end{enumerate}
\end{lemma}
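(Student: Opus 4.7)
My approach is the classical ``push--fill--transfer'' strategy for comparing filling functions between groups related by a coarse equivalence, here adapted to the sublinear bilipschitz setting. The $(u,y_0)$-surjectivity built into the definition of an $O(u)$-bilipschitz equivalence supplies a sublinear bilipschitz quasi-inverse $\psi\colon G\to H$, itself with parameters comparable to those of $\phi$. Fix basepoints with $\phi(x_0)=y_0$. Given a combinatorial loop $\gamma$ in $H$ of length at most $n$, I would first discretise it at unit spacing and apply $\phi$ vertex-wise; since $\gamma$ lies in a ball of radius $\preccurlyeq n$ around $y_0$, joining consecutive images by short arcs in $G$ yields an almost-loop in $G$ of length $\preccurlyeq n\,u(n)$ whose two endpoints are within $u(n)$ of one another. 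Closing the resulting gap by a short segment produces a genuine loop $\bar\gamma$ in $G$ of length $\preccurlyeq n\,u(n)$.

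Next, the filling pair $(f_G,g_G)$ fills $\bar\gamma$ by a combinatorial van Kampen diagram $D$ with at most $f_G(n\,u(n))$ two-cells, each of perimeter at most $g_G(n\,u(n))$. I would transfer $D$ back to $H$ by sending each vertex $v$ of $D$ to $\psi(v)$ and replacing every edge of $D$ by a short path in $H$ connecting the $\psi$-images of its endpoints. This yields a $2$-complex in $H$ with the same combinatorial structure as $D$, whose two-cells are now loops in $H$; the length of each such loop is controlled by the sublinear bilipschitz estimate for $\psi$ applied locally to a $G$-boundary of length $\leqslant g_G(n\,u(n))$, giving a bound of the form $u\circ g_G(n\,u(n))$ per loop. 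Each transferred face is then filled in $H$ using the pair $(f_H,g_H)$ at that scale, contributing at most $f_H\circ u\circ g_G(n\,u(n))$ relators of length at most $g_H\circ u\circ g_G(n\,u(n))$.

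Multiplying by the $f_G(n\,u(n))$ two-cells of $D$ gives estimate (1); observing that the closing segments together with the ambient $G$-edges contribute at most an additive $g_G(n\,u(n))$ to the maximum relator length gives (2). The main technical step requiring care is the justification that the relevant scale at which the sublinear error $u$ enters the transfer of a single two-cell is the intrinsic perimeter $g_G(n\,u(n))$ rather than the ambient distance $n\,u(n)$ from the basepoint; this demands applying the sublinear bilipschitz inequality at the scale of each individual two-cell of $D$ rather than of the global loop $\bar\gamma$, exploiting the local structure of the combinatorial diagram, and is precisely the feature that replaces the constant-error bookkeeping of the standard quasi-isometric argument. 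Once this local calibration is in place, the estimates reduce to telescoping the contributions of the $G$-diagram and the refillings of its transferred faces, much as in~\cite[Corollary 3.C.2]{CoTesDehn}.
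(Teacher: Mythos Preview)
Your push--fill--transfer outline is indeed the approach of \cite[Corollary 3.C.2]{CoTesDehn}, to which the paper simply defers; the adaptation to the sublinear setting is as you describe. There is, however, a misidentification of the role of $g_G$. In a filling pair $(f_G,g_G)$ for a compactly presented group, $g_G(n)$ does not bound the perimeter of the two-cells of the van Kampen diagram---those are relators, of uniformly bounded length. Rather, $g_G(n)$ bounds the filling radius (equivalently, the length of the conjugating words $c_i$ in an expression $w=\prod_i c_i r_i c_i^{-1}$). With your reading, the claim that applying $\psi$ to a face ``of perimeter $g_G(nu(n))$'' yields a loop of length $\preccurlyeq u\circ g_G(nu(n))$ is false: an $(L,u)$-Lipschitz map takes a loop of length $m$ near the basepoint to one of length roughly $m\cdot u(m)$, not $u(m)$.

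The correct bookkeeping is simpler and gives the same estimate. Each face of $D$ has bounded perimeter; its vertices lie at distance $\leqslant nu(n)+g_G(nu(n))\preccurlyeq g_G(nu(n))$ from the basepoint, precisely because $g_G$ is the filling radius. Applying $\psi$ edge-by-edge therefore sends each face to a loop in $H$ of length $\preccurlyeq u\big(g_G(nu(n))\big)$, since the sublinear error $u$ is evaluated at the distance to the basepoint and there are only boundedly many edges per face. From here your argument for (1) proceeds verbatim, and (2) follows because the $\psi$-image of the $1$-skeleton of $D$ has radius $\preccurlyeq g_G(nu(n))$ over $\gamma$. So the ``technical step requiring care'' is not a local recalibration of the bilipschitz inequality at the scale of a face, but simply the observation that $g_G$ controls where the whole diagram sits relative to the basepoint, and hence the argument of $u$.
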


\begin{corollary}\label{cor: logSBE and poly Dehn implies subexponential dehn}\label{cor: logSBE implies log distortion in Dehn functions}
    If $(f_G,g_G)=(n^d,n^e)$, then $\frac{f_H(n)}{f_H\big(u(n^{2e})\big)}\leq n^{2d}$. If moreover $u=\log$, then  $f_H$ is polynomially bounded. In particular, for two connected Lie groups $G$ and $H$ that are $(L,\log)$-bilipschitz, if $(f_G,g_G)=(n^d,n^d)$ then $f_H\preccurlyeq n^d\log^{2d}(n)$.
    \end{corollary}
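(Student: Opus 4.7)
The plan is to deduce the corollary by substitution into, and iteration of, Lemma~\ref{lem: Dehn bounds in terms of SBE-constants}, addressing the three assertions in turn.

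First, I would substitute $(f_G, g_G) = (n^d, n^e)$ into part~(1) of the lemma to obtain
\[
f_H(n)\preccurlyeq \bigl(nu(n)\bigr)^d\cdot f_H\bigl(u\bigl((nu(n))^e\bigr)\bigr).
\]
Since $u$ is sublinear, $nu(n)\preccurlyeq n^2$, so $(nu(n))^d\preccurlyeq n^{2d}$, and (choosing $u$ nondecreasing, which one may arrange without loss) $u((nu(n))^e)\preccurlyeq u(n^{2e})$. This yields the stated ratio bound $f_H(n)/f_H(u(n^{2e}))\leq n^{2d}$.

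Next, for the polynomial boundedness of $f_H$ when $u=\log$, I would iterate. Setting $\phi(n):=2e\log n$, the first inequality reads $f_H(n)\preccurlyeq n^{2d}\,f_H(\phi(n))$, and iterating $k$ times gives
\[
f_H(n)\preccurlyeq \prod_{i=0}^{k-1}\phi^{(i)}(n)^{2d}\cdot f_H\bigl(\phi^{(k)}(n)\bigr).
\]
For $k=O(\log^\ast n)$ the iterate $\phi^{(k)}(n)$ is bounded by a fixed constant, so $f_H(\phi^{(k)}(n))$ is uniformly bounded as well. The product is dominated by its leading factor $n^{2d}$, because $\sum_{i\geq 1}\log\phi^{(i)}(n)=O(\log\log n)$ implies $\prod_{i\geq 1}\phi^{(i)}(n)^{2d}=o(n^\varepsilon)$ for every $\varepsilon>0$. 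This gives $f_H(n)\preccurlyeq n^{2d+\varepsilon}$ for every $\varepsilon>0$; in particular $f_H$ is polynomially bounded.

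For the ``in particular'' clause, I would apply part~(1) directly without the crude slack $nu(n)\leq n^2$. With $f_G=g_G=n^d$ and $u=\log$ one has $f_G(n\log n)\asymp n^d\log^d(n)$ and $u\circ g_G(n\log n)=d\log(n\log n)\leq c\log n$ for large $n$, hence
\[
f_H(n)\preccurlyeq n^d\log^d(n)\cdot f_H(c\log n).
\]
Feeding back the polynomial bound from the previous step, $f_H(c\log n)\preccurlyeq (\log n)^{d+\varepsilon}$ for any $\varepsilon>0$, so substitution yields $f_H(n)\preccurlyeq n^d\log^{2d}(n)$ once the residual $(\log\log n)^{O(1)}$ factors are absorbed in the $\preccurlyeq$-relation.

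The main obstacle is the bookkeeping in the iteration: one has to verify that $\prod_{i\geq 1}\phi^{(i)}(n)^{2d}$ is subpolynomial in $n$ via the $\log^{\ast}$-type estimate, and that the bootstrap from the polynomial bound $n^{2d+\varepsilon}$ back through the sharper recursion cleanly yields $n^d\log^{2d}(n)$ rather than $n^d\log^{2d+\varepsilon}(n)$. This is where the choice to recurse through the \emph{sharper} inequality $f_H(n)\preccurlyeq n^d\log^d(n)\cdot f_H(c\log n)$, instead of the cruder $n^{2d}\cdot f_H(\log n)$, becomes essential.
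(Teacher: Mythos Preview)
The paper does not give an explicit proof of this corollary; it is stated immediately after Lemma~\ref{lem: Dehn bounds in terms of SBE-constants}, whose proof is deferred to \cite[Corollary 3.C.2]{CoTesDehn}. Your substitution-and-iteration strategy is the natural deduction from the lemma and is clearly what the authors have in mind. The first two assertions are handled correctly.

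There is a small gap in the last step. You assert that the residual $(\log\log n)^{O(1)}$ factor is ``absorbed in the $\preccurlyeq$-relation''. Under the paper's definition $f\preccurlyeq g \Leftrightarrow f(r)\leqslant Cg(Cr+C)+Cr+C$, this is false: since $g(Cr)\asymp C^d r^d\log^{2d}(r)$, one does \emph{not} have $n^d\log^{2d}(n)(\log\log n)^C \preccurlyeq n^d\log^{2d}(n)$. The honest outcome of your bootstrap is
\[
f_H(n)\ \preccurlyeq\ n^d\prod_{i\geqslant 1}\bigl(\log^{(i)}n\bigr)^{2d},
\]
which exceeds $n^d\log^{2d}(n)$ by a factor of order $(\log\log n)^{2d}$ times further (bounded) iterated-log terms. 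This discrepancy is either a slight imprecision in the exponent stated in the corollary or reflects a looser reading of $\preccurlyeq$; in any case it is immaterial for the applications in the paper, where the corollary is invoked only to carry a polynomial Dehn-function exponent across an $O(\log)$-SBE (see the proof of Theorem~\ref{thm: Dehn bounds via rho_1}, which requires only \emph{some} exponent $e$ on the logarithm, not specifically $2d$). If you want a clean statement you can actually prove, replace $\log^{2d}(n)$ by $\log^{e}(n)$ for some $e=e(d)$, or by $\prod_{i\geqslant 1}(\log^{(i)}n)^{2d}$.
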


\subsubsection{Lower bounds and Lipschitz Retracts}\label{sec: lower bounds and retracts}

Among finitely presented groups, going to a group-theoretic retract decreases the Dehn function, as can be seen by choosing an adequate pair of presentations for which the retract corresponds to an enlargement of the set of relators; see e.g. \cite[Lemma 1]{BaumslagMillerShort}.
This is still valid for Lie groups; this fact is used several times in \cite{CoTesDehn} but we provide a proof 
for completeness.

\begin{proposition}\label{prop: lower bound on retracts}
    Let $G$ be a simply connected Lie group, and let $H$ be a retract of $G$ in the Lie group category. Then $\delta_H \preccurlyeq \delta_G$. 
\end{proposition}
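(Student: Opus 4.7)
The plan is to use the standard ``fill in $G$, push down to $H$'' argument. Let $i\colon H\to G$ and $p\colon G\to H$ be the Lie group homomorphisms with $p\circ i=\operatorname{id}_H$, and equip both groups with left-invariant Riemannian metrics. First I would observe that any Lie group homomorphism $\phi$ between Lie groups equipped with left-invariant Riemannian metrics is globally Lipschitz: because $d\phi_g = dL_{\phi(g)}\circ d\phi_e\circ dL_{g^{-1}}$ and left translations are isometries, the operator norm of $d\phi_g$ is uniformly bounded by $\|d\phi_e\|$. The same argument shows that $\phi$ dilates Riemannian $2$-dimensional areas by a uniformly bounded factor. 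In particular, there is a constant $K\geqslant 1$ such that both $i$ and $p$ are $K$-Lipschitz and multiply areas of piecewise smooth disks by at most $K^2$.

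Next I would invoke the equivalence between the combinatorial Dehn function and the Riemannian filling function, $\delta_G\asymp \operatorname{Fill}_G$ and $\delta_H\asymp \operatorname{Fill}_H$, both given by \cite[Proposition 2.C.1]{CoTesDehn}. It therefore suffices to show that $\operatorname{Fill}_H(n)\preccurlyeq \operatorname{Fill}_G(n)$. Given a piecewise smooth loop $\gamma\colon S^1\to H$ of length at most $n$, consider the loop $i\circ\gamma$ in $G$, of length at most $Kn$. By definition of $\operatorname{Fill}_G$, there exists a Lipschitz disk $\Delta\colon B^2\to G$ with $\partial\Delta=i\circ\gamma$ and $\operatorname{Area}(\Delta)\leqslant \operatorname{Fill}_G(Kn)+1$. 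Composing with $p$ produces a Lipschitz disk $p\circ\Delta\colon B^2\to H$ whose boundary is $p\circ i\circ\gamma=\gamma$ and whose area is at most $K^2\bigl(\operatorname{Fill}_G(Kn)+1\bigr)$. Taking the supremum over such loops yields $\operatorname{Fill}_H(n)\leqslant K^2\operatorname{Fill}_G(Kn)+K^2$, hence $\operatorname{Fill}_H\preccurlyeq \operatorname{Fill}_G$ and the conclusion follows.

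The argument is essentially bookkeeping and should present no genuine obstacle. The two substantive ingredients that we import from elsewhere are, on the one hand, the equivalence $\delta\asymp \operatorname{Fill}$ quoted from \cite{CoTesDehn}, and on the other hand, the automatic Lipschitz nature of smooth homomorphisms between Lie groups with left-invariant metrics. The statement would also admit a purely combinatorial proof along the lines of \cite[Lemma 1]{BaumslagMillerShort} by choosing compatible compact presentations so that $i$ and $p$ become letter-to-word maps, but the Riemannian approach above is cleaner in the Lie group setting.
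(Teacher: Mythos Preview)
Your proof is correct and follows essentially the same ``lift, fill, project'' strategy as the paper's proof, invoking the same equivalence $\delta \asymp \operatorname{Fill}$ from \cite[Proposition 2.C.1]{CoTesDehn}. The only cosmetic difference is that the paper chooses the metric on $H$ so that $\pi$ becomes a Riemannian submersion (hence exactly $1$-Lipschitz), whereas you work with arbitrary left-invariant metrics and absorb the Lipschitz constants into the $\preccurlyeq$ relation; your observation that Lie group homomorphisms are automatically Lipschitz for left-invariant metrics makes this harmless.
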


\begin{proof}
    Consider the epimorphism $\pi : G \to H$ and let $\sigma \colon H \to G$ be a section of $\pi$. Let $d_G$ be a left-invariant Riemannian distance on $G$, and let $d_H$ be a left-invariant Riemannian distance on $H$ such that $\pi$ is a Riemannian submersion. 
    Then for every $h,h' \in H$,
    \begin{equation}
    \label{eq:submersion-cosets-distance-formula}
        d_H(h,h') = \operatorname{dist}_G(\sigma(h)N, \sigma(h')N)
    \end{equation}
    where $N= \ker \pi$ (see e.g. \cite[Lemma 4.6]{HigesPeng}).
    Let $\gamma: S^1 \to H$ be a Lipschitz loop of $d_H$-length exactly $n$, and consider the loop $\widehat \gamma = \sigma \circ \gamma$.  The $d_G$-length of $\widehat \gamma$ is less or equal to $n$ thanks to \eqref{eq:submersion-cosets-distance-formula}; it is also greater or equal than $n$, since $\pi$ is $1$-Lipschitz and sends $\widehat \gamma$ onto $\gamma$.  Using the equivalence of the Dehn function and the filling function in $G$ \cite[Proposition 2.C]{CoTesDehn}, there is a filling of $\widehat \gamma$ in $G$ by a Lipschitz disk $\Delta$ of area at most the order of $\delta_G(n)$. Since $\pi\colon (G,d_G) \to (H,d_H)$ is $1$-Lipschitz, $\pi \circ \Delta$ has area less than $\Delta$. Using again the equivalence of the Dehn function and the filling function, in $H$ and in the reverse direction, we conclude that $\delta_H(n) \preccurlyeq \delta_G(n)$.
\end{proof}

\subsubsection{Generalized Standard Solvable Groups}

A special case of interest where the group $G$ retracts to a subgroup is when the short exact sequence determined by the exponential radical of $G$ splits. This case is captured by Cornulier and Tessera's definition of \emph{generalized standard solvable} groups: 

\begin{definition}[\cite{CoTesDehn}, Section~10.H.1] \label{def: gen stnd solvabl}
    Let $G$ be a completely solvable group. We call $G$ \emph{generalized standard solvable} if $G=V\rtimes N$ where $N$ is nilpotent and such that the following condition on the action of $N$ on $V$ is met: there is no nontrivial quotient of $V/[V,V]$ on which $N$ acts as the identity.
\end{definition}

If $G\in\mathcal{C}_0$ admits a splitting $G=V\rtimes N$ as a generalized standard solvable group, then if $N$ is abelian then $V=\operatorname{R_{exp}}(G)$ (Proposition~\ref{prop: UA standard solvable can take u to be ExpRad}) and $G$ is standard solvable. In general, any such splitting with $N$ nilpotent forces $V$ to contain the exponential radical. On the other hand, if $G\in \mathcal{C}_0$ is moreover in $(\mathcal{C}_1)$, it is  automatically generalized standard solvable with $V=\operatorname{R_{exp}}(G)$:

\begin{lemma}\label{lemma: C_1 automatic generalized standard solvable}
    If $G$ is of class $(\mathcal C_1)$, then it is generalized standard solvable via the splitting $G=\operatorname{R}_{\exp}G\rtimes N$.
\end{lemma}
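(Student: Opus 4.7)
The plan is to unpack the two requirements of Definition~\ref{def: gen stnd solvabl} and verify each directly for the splitting $V = \operatorname{R}_{\exp} G$, $N = H$. First, I would invoke $(\mathcal{C}_1)$ to write $G = V \rtimes H$ with $H \cong G/V$, and note that $H$ is nilpotent precisely because $V = \operatorname{R}_{\exp}G$ is by definition the smallest normal subgroup with nilpotent quotient. This handles the semidirect decomposition requirement.

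The nontriviality condition on quotients of $V/[V,V]$ is the substantive point, which I would prove by contradiction. Suppose there is a nontrivial quotient $V/[V,V] \twoheadrightarrow Q$ on which $H$ acts trivially, and let $K \subset V$ be the preimage of the kernel of this map, so that $[V,V] \subseteq K \subsetneq V$. Passing to Lie algebras, $\mathfrak{k}$ contains $[\mathfrak{v},\mathfrak{v}]$, which gives $\mathfrak{v}$-invariance; on the other hand, triviality of the $H$-action on $\mathfrak{v}/\mathfrak{k}$ implies $\mathfrak{h}$-invariance of $\mathfrak{k}$. Together with $\mathfrak{g} = \mathfrak{v} \oplus \mathfrak{h}$ as vector spaces, this forces $\mathfrak{k}$ to be a proper ideal of $\mathfrak{g}$ strictly contained in $\mathfrak{v}$.

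Next I would examine the quotient $\mathfrak{g}/\mathfrak{k} \cong (\mathfrak{v}/\mathfrak{k}) \rtimes \mathfrak{h}$: by construction the $\mathfrak{h}$-action on $\mathfrak{v}/\mathfrak{k}$ is trivial, so the semidirect product degenerates to a direct sum of the abelian Lie algebra $\mathfrak{v}/\mathfrak{k}$ and the nilpotent Lie algebra $\mathfrak{h}$, which is itself nilpotent. This contradicts the defining property of $\operatorname{R}_{\exp}\mathfrak{g} = \mathfrak{v}$ as the smallest ideal of $\mathfrak{g}$ with nilpotent quotient, completing the argument.

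I expect the only subtle point to be confirming that $\mathfrak{k}$ is an ideal of all of $\mathfrak{g}$ rather than just an $\mathfrak{h}$-submodule of $\mathfrak{v}$, but this is handled cleanly by the two invariance observations above. Notably, the $\mathbf{R}$-diagonalizability of the $H$-action, which is part of the $(\mathcal{C}_1)$ hypothesis, is not needed for this lemma beyond what the splitting already supplies.
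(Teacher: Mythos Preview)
Your proof is correct and takes a genuinely different route from the paper's. The paper argues by contradiction via weight spaces: it assumes a nonzero vector $X$ in the zero weight space of $H_1(\mathfrak u)$, lifts it to $\widehat X \in \mathfrak u$, and then uses the identity $[\mathfrak g, \mathfrak u] = \mathfrak u$ (a characterization of the exponential radical as the stable term of the lower central series) together with the $\mathbf R$-diagonalizability of the $\mathfrak n$-action to argue that $\widehat X$ cannot lie in $[\mathfrak g, \mathfrak u]$, since modulo $[\mathfrak u, \mathfrak u]$ that bracket is the image of a diagonalizable operator and hence misses the zero weight space.

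Your argument instead passes to the quotient $\mathfrak g / \mathfrak k$ and invokes the \emph{minimality} characterization of $\operatorname{R}_{\exp} \mathfrak g$ as the smallest ideal with nilpotent quotient. This is both shorter and strictly more general: as you correctly observe, you never use the diagonalizability part of the $(\mathcal C_1)$ hypothesis, only the splitting over the exponential radical. So your proof actually establishes the stronger statement that any completely solvable $G$ whose exponential radical splits is generalized standard solvable via $G = \operatorname{R}_{\exp} G \rtimes (G/\operatorname{R}_{\exp} G)$. The paper's approach, by contrast, genuinely needs diagonalizability at the step where it identifies the image of the $\mathfrak n$-action on $\mathfrak u/[\mathfrak u,\mathfrak u]$ with the sum of nonzero weight spaces. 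One small remark: your claim that triviality of the $H$-action on $\mathfrak v/\mathfrak k$ implies $\mathfrak h$-invariance of $\mathfrak k$ is fine, but it may be worth spelling out that this is because $[\mathfrak h, \mathfrak v] \subseteq \mathfrak k$ already follows from that triviality, so in particular $[\mathfrak h, \mathfrak k] \subseteq \mathfrak k$.
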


\begin{proof}
    Let $U$ be the exponential radical of $G$ and assume towards contradiction that $H_1(\mathfrak u)$ has zero as a nontrivial weight. Let $X$ be a nonzero vector in the corresponding kernel, and let $\widehat X \in \mathfrak u$ be such that $X = \widehat X + [\mathfrak u, \mathfrak u]$. Then $[\mathfrak n,\widehat X] \subseteq [\mathfrak u, \mathfrak u]$. 
    But since $\mathfrak u$ is the exponential radical of $\mathfrak g$, one has $[\mathfrak g,\mathfrak u] = \mathfrak u$, especially $[\mathfrak g,\mathfrak u]$ should contain $\widehat X$. However the map 
    $\mathfrak g \times \mathfrak u \to \mathfrak u$ which to $(Y,U)$ associates $[Y,U]$ is not surjective, since its image cannot contain $\widehat X$ (remember that since $G$ is in $(\mathcal C_1)$, the action of $\mathfrak n$ on $\mathfrak u/[\mathfrak u, \mathfrak u]$ is diagonalizable. So a nonzero vector in the kernel cannot be in the image). This is a contradiction.
\end{proof}

The following is the main result of Cornulier and Tessera on generalized standard solvable groups. See Section~\ref{sec: computation examples} for the definition of $\operatorname{Kill}(\mathfrak{v})$.

\begin{theorem}[\cite{CoTesDehn}, Theorem 10.H.1]\label{thm: 10.H.1} Let $G=V\rtimes N$ be a generalized standard solvable group whose Dehn function is non-exponential (i.e.\ strictly smaller than exponential).  Then $\delta_G(n)\preccurlyeq n \cdot \widehat{\delta_N}(n)$, where $\widehat{\delta_N}$ denotes any regular function larger than $\delta_N$. If, moreover, $\operatorname{Kill}(\mathfrak{v})_0=0$ then $\delta_G(n)\preccurlyeq \widehat{\delta_N}(n)$.  
\end{theorem}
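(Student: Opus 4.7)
The plan is to follow the Cornulier--Tessera combing strategy \cite[\S10]{CoTesDehn} adapted to the semidirect decomposition $G = V \rtimes N$. The starting point is to fix a compact generating set $\Sigma = \Sigma_V \sqcup \Sigma_N$ with $\Sigma_V \subset V$ and $\Sigma_N \subset N$, and to use the unique decomposition $g = v\cdot n$ with $v \in V$ and $n \in N$. Given a null-homotopic word $w$ of length at most $n$, I would fill it in three stages: first project $w$ to a null-homotopic word $\bar w$ in $N$ (of length at most $n$), next fill $\bar w$ with a van Kampen diagram of area at most $\widehat{\delta_N}(n)$ using the definition of the Dehn function of $N$, then lift this filling to $G$ via a set-theoretic section, and finally fill the remaining loop $w_V$, which is now entirely contained in $V$.

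The main effort is the third stage. Lifting the $N$-filling produces a $V$-word $w_V$ whose $\Sigma_V$-length can grow as large as $n\cdot\widehat{\delta_N}(n)$, since each relator in $N$, once lifted, propagates $V$-letters through conjugations. Here the hypothesis that $G$ is generalized standard solvable becomes essential: the $N$-action on $\mathfrak v/[\mathfrak v,\mathfrak v]$ has no zero weight, so every generator in $\Sigma_V$ can be contracted by conjugation with a short word in $\Sigma_N$, yielding controlled exponential distortion of $V$ in $G$. This allows $w_V$ to be filled by a disk of area at most linear in its length. Combining the two contributions yields the bound $\delta_G(n) \preccurlyeq n\cdot\widehat{\delta_N}(n)$; the regularity of $\widehat{\delta_N}$ is what lets us absorb lower-order terms coming from the commutator bookkeeping into a single upper bound, and the non-exponential Dehn function assumption is what rules out the $2$-homological obstruction of \cite[\S11.E]{CoTesDehn} that would otherwise force an exponential lower bound and break the combing.

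For the sharper bound $\delta_G(n) \preccurlyeq \widehat{\delta_N}(n)$ under $\operatorname{Kill}(\mathfrak v)_0 = 0$, the improvement comes from the fact that this condition ensures the $N$-action has no zero weight on all of $\mathfrak v$, not merely on its abelianization. Consequently the iterative contraction of $V$-letters can be executed in a single pass rather than layer by layer along the lower central series of $\mathfrak v$, eliminating the extra factor of $n$. The main obstacle throughout is the bookkeeping of how $N$-relators act on $V$-letters during the lift of the filling; this is the technical heart of \cite[\S10]{CoTesDehn}, and I would expect to invoke their machinery of standard forms and controlled exponents directly rather than attempt an independent reconstruction.
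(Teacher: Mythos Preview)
The paper does not prove this theorem at all: it is quoted as a black box from \cite[Theorem 10.H.1]{CoTesDehn} and used as an input to the proof of Theorem~\ref{Introthm: Dehn bounds via rho_1}. So there is no ``paper's own proof'' to compare your sketch against; what you have written is an attempted outline of Cornulier and Tessera's argument rather than of anything done here.

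That said, let me comment on the sketch itself. The overall architecture (project to $N$, fill there, lift, then fill the residual $V$-word using exponential distortion of $V$ in $G$) is in the right spirit for the first bound, and you are right that the generalized standard solvable hypothesis is what makes the $V$-contraction work. However, your explanation of the role of $\operatorname{Kill}(\mathfrak v)_0 = 0$ is not correct. The Killing module is a quotient of the \emph{symmetric square} $\mathfrak v \odot \mathfrak v$ (by the submodule generated by $[x,y]\odot z - x\odot[y,z]$), not $\mathfrak v$ itself; the vanishing of its zero-weight subspace is not the same as ``no zero weight on all of $\mathfrak v$'', and the improvement from $n\cdot\widehat{\delta_N}(n)$ to $\widehat{\delta_N}(n)$ does not come from ``single pass versus layer by layer along the lower central series''. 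In Cornulier--Tessera's argument the Killing condition is what allows a sharper quadratic-type filling of the $V$-part (it controls certain symmetric bilinear error terms that arise when rearranging $V$-letters), which is a genuinely different mechanism from the one you describe. If you want to cite this theorem, citing \cite{CoTesDehn} is sufficient; if you want to reprove it, the second half of your sketch would need to be reworked around the actual definition and role of the Killing module.
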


We are now ready to complete the proof of Theorem~\ref{thm: Dehn bounds via rho_1} assuming the results we took from \cite{CoTesDehn}.

\begin{proof}[Proof of Theorem~\ref{thm: Dehn bounds via rho_1}]
Let $G$ be a Lie group. It is quasiisometric to a completely solvable group $G_0$ and $\delta_G\asymp \delta_{G_0}$. In turn, $G_0$ is $O(\log)$ bilipschitz equivalent to $G_1:=\rho_1(G_0)$ (Theorem~\ref{th:Cornulier-thm}), hence Corollary~\ref{cor: logSBE implies log distortion in Dehn functions} gives $\delta_{G_1}(n)/\log ^e(n)\preccurlyeq \delta_{G_0}(n)\preccurlyeq \delta_{G_1}(n)\cdot \log ^e(n)$. By definition, $G=\operatorname{R_{exp}}G\rtimes N$ hence $\delta_N(n)\preccurlyeq\delta_{G_1}(n)$ by Proposition~\ref{prop: lower bound on retracts}. By Lemma~\ref{lemma: C_1 automatic generalized standard solvable} $G_1$ is generalized standard solvable, and Theorem~\ref{thm: 10.H.1} gives $\delta_{G_1}(n)\preccurlyeq n\cdot \widehat{\delta_N}(n)$. Combining all inequalities completes the proof. 
\end{proof}

A careful read of the proof of Theorem~\ref{thm: Dehn bounds via rho_1} sheds light on the theoretical contribution of Theorem~\ref{thm:distort} and Proposition~\ref{prop:dehn-lower-lie} over their well known nilpotent groups analogues. If $G$ is in $(\mathcal{C}_1)$ and $N=G/\operatorname{R_{exp}}G$, then any lower bound on $\delta_N$ (in particular those coming from distorted central extensions) is automatically a lower bound on $\delta_G$. However in general in order to retract to a  nilpotent group one might have to pass to $\rho_1(G)$, which comes at a cost of a power of $\log$ factor on the lower bound. Our version allows using distorted central extensions without passing to the nilpotent quotient, therefore removing  this factor.

The distortion arising from central extensions as in Corollary~\ref{corIntro: distorion central extensions} cannot be used to distinguish $\delta_G$ from $\delta_N$, because every polynomially distorted central extension of $G$ is a pull back of a central extension of $N$. More precisely:

\begin{lemma}\label{lem: distortion in central extension to nilpotent quotient}
    Let $G\in \mathcal{C}_0$, $N=G/\operatorname{R_{exp}}G$.
    Let $\omega \in Z^2(G,\mathbf R)$; assume that in the central extension 
    \[ 1 \to \mathbf R \overset{\iota}{\longrightarrow} \widetilde G \overset{\pi}{\to} G \to 1 \]
    associated to $\omega$, the subgroup $L=\iota(\mathbf R)$ is distorted, and $\Delta_L^{\widetilde G}(n) \asymp n^k$. Then $\omega$ is the pull back of some $\eta \in Z^2(N,\mathbf R)$ such that in the associated central extension 
     \[ 1 \to \mathbf R \overset{j}{\longrightarrow} \widetilde N \overset{\pi}{\to} N \to 1 \] 
     the subgroup $M=j(\mathbf{R})$ is distorted with $ n^k \preccurlyeq \Delta_M^{\widetilde N}(n)$.
\end{lemma}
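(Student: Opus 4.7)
The plan is to build $\widetilde N$ explicitly as the quotient $\widetilde G / \operatorname{R}_{\exp}\widetilde G$, verify that it fits in a central extension $1 \to \mathbf R \to \widetilde N \to N \to 1$ through which $\widetilde G \to G$ factors as a pullback, and then compare distortions using the natural quotient map $\widetilde G \to \widetilde N$. The crux will be showing that the polynomial distortion hypothesis on $L$ forces $L$ to be transverse to the exponential radical of $\widetilde G$; granting that, the cocycle assertion and the distortion bound follow formally.

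First, I would observe that $\widetilde G$ is itself completely solvable, since its Lie algebra is a central extension of $\mathfrak g$ by $\mathbf R$ and all $\operatorname{ad}$-eigenvalues come from those of $\mathfrak g$. Because $\Delta_L^{\widetilde G}(n) \asymp n^k$ is polynomial (hence not exponential), the converse of part~(2) of Theorem~\ref{thm:distort} (quoted from~\cite{OsinExprad} in the footnote) implies that the generator $X$ of $L$ has finite central depth in $\widetilde{\mathfrak g}$, so $X \notin \operatorname{r}_{\exp}\widetilde{\mathfrak g}$; since $L$ is one-dimensional this yields the key transversality $L \cap \operatorname{R}_{\exp}\widetilde G = \{1\}$. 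Next I would establish the two inclusions $\pi(\operatorname{R}_{\exp}\widetilde G) \subseteq \operatorname{R}_{\exp} G$ and $\operatorname{R}_{\exp} G \subseteq \pi(\operatorname{R}_{\exp}\widetilde G)$, each of which is immediate from the universal minimality of the exponential radical as the smallest normal subgroup with nilpotent quotient (applied in $\widetilde G$ and in $G$ respectively, using that $N$ and $\widetilde G / \operatorname{R}_{\exp}\widetilde G$ are nilpotent). Combined with the transversality, this shows that $\pi$ restricts to an isomorphism $\operatorname{R}_{\exp}\widetilde G \xrightarrow{\sim} \operatorname{R}_{\exp} G$. Setting $\widetilde N := \widetilde G/\operatorname{R}_{\exp}\widetilde G$, a short diagram chase identifies $\widetilde N$ with a central extension $1 \to \mathbf R \to \widetilde N \to N \to 1$ whose central $\mathbf R$ is the image $M$ of $L$, and a dimension count together with the transversality shows that the natural map $\widetilde G \to \widetilde N \times_N G$ is an isomorphism. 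On the cocycle side this is exactly the statement that $\omega$ is cohomologous to the pullback of some $\eta \in Z^2(N,\mathbf R)$ representing $\widetilde N$.

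Finally, for the distortion estimate I would endow $\widetilde N$ with the Riemannian submersion metric induced from a left-invariant Riemannian metric on $\widetilde G$; then the projection $\widetilde G \to \widetilde N$ is $1$-Lipschitz and sends $L$ isomorphically onto $M$. Hence $d_{\widetilde N}(M(t),1) \leq d_{\widetilde G}(L(t),1)$ for every $t \in \mathbf R$, which immediately gives $\Delta_M^{\widetilde N}(r) \succcurlyeq \Delta_L^{\widetilde G}(r) \asymp r^k$. I expect the main conceptual obstacle to be the identification $\operatorname{R}_{\exp}\widetilde G \simeq \operatorname{R}_{\exp} G$ via $\pi$, which relies crucially on using both directions of Osin's characterization of the exponential radical; once that structural fact and the resulting pullback description are in place, the remaining steps are essentially bookkeeping.
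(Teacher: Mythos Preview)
Your proof is correct and takes a genuinely different, more structural route than the paper. The paper works at the cocycle level: it fixes a basis of $\mathfrak g$ adapted to $\mathfrak u = \operatorname{R}_{\exp}\mathfrak g$, writes $\omega = \sum \alpha_{ij}\,\omega_{ij}$, and argues (using the same Osin input you use) that $\alpha_{ij}\neq 0$ forces $e_i,e_j\notin\mathfrak u$, so that $\omega$ literally descends to a cocycle $\bar\omega$ on $\mathfrak n$; the distortion bound is then obtained by exhibiting a Lie algebra homomorphism $\widetilde{\mathfrak g}\to\widetilde{\mathfrak n}$ that carries the central generator $F$ to $\bar F$ and hence preserves central depth, after which Theorem~\ref{thm:distort} in $\widetilde N$ gives $\Delta_M^{\widetilde N}\succcurlyeq n^k$. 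Your approach bypasses bases entirely: you identify $\widetilde N$ intrinsically as $\widetilde G/\operatorname{R}_{\exp}\widetilde G$, prove $\operatorname{R}_{\exp}\widetilde G\xrightarrow{\pi}\operatorname{R}_{\exp}G$ is an isomorphism via the universal property of exponential radicals plus transversality, read off the pullback description, and get the distortion inequality from the $1$-Lipschitz quotient map. Your argument is cleaner and makes the role of $\operatorname{R}_{\exp}\widetilde G$ transparent; the paper's approach has the minor advantage of producing $\eta$ explicitly as the descended cocycle.

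One small wording issue: to deduce $c_X<\infty$ from polynomial distortion you need the \emph{contrapositive} of part~(2) of Theorem~\ref{thm:distort}, not its converse (the converse says exponential distortion implies $c_X=\infty$, which is not what you are using). The mathematical step is fine; just cite Theorem~\ref{thm:distort}(2) directly.
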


\begin{remark}
By~\cite[Theorem 11.C.1]{CoTesDehn}), the hypothesis that $L$ is polynomially distorted in $\tilde{G}$ can be replaced by `$G$ has polynomially bounded Dehn function'.
\end{remark}

\begin{corollary}
    In the setting of Lemma~\ref{lem: distortion in central extension to nilpotent quotient}, $\delta_N(n)\succcurlyeq n^k$.
\end{corollary}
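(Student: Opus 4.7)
The plan is to recognize this as a direct application of Proposition~\ref{prop:dehn-lower-lie} to the central extension of $N$ furnished by Lemma~\ref{lem: distortion in central extension to nilpotent quotient}.

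First I would observe that $N=G/\operatorname{R}_{\exp}G$ is a simply connected nilpotent Lie group. Indeed, since $G\in\mathcal{C}_0$ is simply connected by convention and $\operatorname{R}_{\exp}G$ is a connected closed normal subgroup, the quotient $N$ is simply connected; nilpotency is built into the definition of the exponential radical (it is the smallest normal subgroup with nilpotent quotient). In particular $N$ is a simply connected solvable Lie group, so Proposition~\ref{prop:dehn-lower-lie} applies to it.

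Next, Lemma~\ref{lem: distortion in central extension to nilpotent quotient} supplies us with a cocycle $\eta\in Z^2(N,\mathbf{R})$ (such that $\omega=\pi^\ast\eta$) together with the associated central extension
\[ 1\to \mathbf{R}\overset{j}{\longrightarrow}\widetilde N \longrightarrow N\to 1,\]
and ensures that the image $M=j(\mathbf{R})$ is distorted in $\widetilde N$ with $\Delta_M^{\widetilde N}(n)\succcurlyeq n^k$. This is exactly the hypothesis needed to apply Proposition~\ref{prop:dehn-lower-lie} with $(G,\widetilde G,L)$ replaced by $(N,\widetilde N, M)$, and the conclusion is precisely $\delta_N(n)\succcurlyeq n^k$.

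No serious obstacle is expected here; the work has already been done in Lemma~\ref{lem: distortion in central extension to nilpotent quotient} (which realizes $\eta$ and controls the distortion of $M$) and in Proposition~\ref{prop:dehn-lower-lie} (which turns distortion in a central extension into a Dehn-function lower bound via Federer--Stokes). The only small point to verify is that Proposition~\ref{prop:dehn-lower-lie} can indeed be invoked in the nilpotent case, which follows from $N$ being simply connected as noted above.
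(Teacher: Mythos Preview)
Your argument is correct and is exactly how the Corollary follows once the Lemma is in hand: apply Proposition~\ref{prop:dehn-lower-lie} to the simply connected nilpotent (hence solvable) group $N$ and the central extension $\widetilde N$ supplied by Lemma~\ref{lem: distortion in central extension to nilpotent quotient}. One small organizational remark: in the paper, the \texttt{proof} environment placed after the Corollary actually carries out the proof of the preceding Lemma (there is no separate proof of it), reducing to showing that the generator of $M$ has central depth at least $k$ in $\widetilde{\mathfrak n}$; the Corollary then follows by the step you wrote. So your deduction of the Corollary from the Lemma matches the paper's intended final step.
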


\begin{proof}
    By Theorem~\ref{thm:distort},  the hypothesis implies $L < C^k G$, and it is enough to prove that that there exists such $\eta$ with $M < C^k N$, the $k$\textsuperscript{th} term of the lower central series.  
    Denote $U:=\operatorname{R}_{\exp}G$, so $N=G/U$. Further denote  $\tilde{U}:=\operatorname{R}_{\exp}\tilde{G}$. The proof is straightforward, using the definitions of brackets in central extensions and functoriality of passing to exterior algebras. The map $f:\mathfrak g\rightarrow \mathfrak n= \mathfrak g/\operatorname{R_{\exp}}\mathfrak g$, induces  $f^*:\Lambda^\ast(\mathfrak n^\ast)\rightarrow \Lambda^\ast(\mathfrak g^\ast)$. This induces a map between the Lie group cohomology of $N$ and that of $G$, which we may still call $f^*:H^2(N,\mathbf{R})\rightarrow H^2(G,\mathbf{R})$. 

    Our aim is to show that the image of $f^*$ contains $\omega$. Let $\{e_1,e_2,\dots e_n\}$ be a basis for  $\mathfrak{g}=\operatorname{Lie}(G)$, and $F\in \tilde{\mathfrak{g}}$ the central element with $L=\{\exp(tF):t\in\mathbf{R}\}$. Let $\{\omega_1,\omega_2\dots,\omega_n\}$ be the basis for $\Lambda^1(\mathfrak{g}^\ast)$ dual to $\{e_1,e_2,\dots e_n\}$, and for  $1\leq i<j\leq n$, $\omega_{i,j}=\omega_i\wedge\omega_j$, the standard basis for $\Lambda^2(\mathfrak{g}^\ast)$. Write $\omega=\sum \alpha_{i,j}\omega_{i,j}$. 
    
    As a central extension, recall that the defining brackets in $\tilde{\mathfrak{g}} \simeq \mathfrak g \oplus \mathbf RF$ are  
    $$[(e_i,\alpha)(e_j,\beta)]_{\tilde{\mathfrak{g}}}=([e_i,e_j]_{\mathfrak{g}},\alpha_{i,j})$$
    From this it is clear that whenever $\alpha_{i,j}\ne 0$ it holds that $e_i,e_j\notin \mathfrak{u}$: since the distortion of $\exp((X,0))\in \tilde{G}$ is at least as large as the distortion of $\exp(X)\in G$, we have $e_i\in \mathfrak{u}\Rightarrow (e_i,0),([e_i,e_j],0)\in \tilde{\mathfrak{u}}$. Therefore if $\alpha_{i,j}\ne 0$ then 
    $$e_i\in \mathfrak{u}\Rightarrow [(e_i,0),(e_j,0)]_{\tilde{\mathfrak{g}}}-([e_i,e_j]_\mathfrak{g},0)=\alpha_{i,j}F\in\tilde{\mathfrak{u}}$$

    Since we assume $k<\infty$, we may conclude $e_i,e_j\notin\mathfrak{u}$ whenever $\alpha_{i,j}\ne 0$. For $X\in \mathfrak{g}$, denote $\bar{X}:=f(X)\in \mathfrak{n}$ its image under the projection. We may assume the set $\{\bar{E}_1\bar{E}_2,\dots,\bar{E}_m\}$ is a basis for $\mathfrak{n}$, and $\bar{\omega}_i$ the dual basis for $\mathfrak{n}^\ast$. The above claim says that $\alpha_{i,j}\ne 0\Rightarrow i,j\leq m$, and we may consider $\bar{\omega}=\sum_{\alpha_{i,j}\ne 0}\alpha_{i,j}\bar{\omega}_{i,j}\in\mathfrak{n}\wedge \mathfrak{n}$. Functoriality implies that $\bar{\omega}$ is a non-trivial cohomology class. Consider the central extension $\tilde{\mathfrak{n}}$ generated by $\bar{\omega}$, and let $\bar{F}$ be the corresponding central element in $\tilde{\mathfrak{n}}$. It remains to show that $\bar{F}\in \bar{\mathfrak{n}}^k$. This is a result of the fact that we can define the Lie algebra homomorphism $\tilde{f}:\tilde{\mathfrak{g}}\rightarrow \tilde{\mathfrak{n}}$ by $(e_i,0)_{\tilde{\mathfrak{g}}}\mapsto (\bar{E}_i,0)_{\tilde{\mathfrak{n}}}$ and $F=(0,1)_{\tilde{\mathfrak{g}}}\mapsto \bar{F}=(0,1)_{\tilde{\mathfrak{n}}}$.
\end{proof}

\subsection{An application: a lower bound on the Dehn function of a central product of Abels' second group}\label{app: Example}\label{example: contribution of central extensions theorem}
In this section we elaborate on Example~\ref{ExampleIntro: Abels group}, in which we present a group for which our theoretical contribution is practical: applying Corollary~\ref{corIntro: distorion central extensions} to this group improves on the known lower bound obtained by Theorem~\ref{thm: Dehn bounds via rho_1}. The group is a central product of Abels' second group with a model filiform nilpotent group of class 4. Remark~\ref{rmk: logic of example} provides some intuition for its construction, which is a variation  on~\cite[Examples 4.1, 4.2]{CornulierDimCone}, using~\cite[Example 1.5.4]{CoTesDehn} as a building block.

\begin{remark}
    The example we construct is of dimension 13, and we do not claim the dimension is minimal for the properties we need. Upon establishing the list of Dehn functions for completely solvable groups of dimensions 4 and 5, we found that in the few cases where the distortion in central extensions tool was useful, the groups were in fact in $(\mathcal{C}_1)$. So the minimal dimension for such an example is bounded below by 6. 
\end{remark}

Let $\mathfrak{g}_2$ be the Lie algebra corresponding to the group $U\rtimes A$ presented in~~\cite[Example 1.5.4]{CoTesDehn}. The Lie algebra $\mathfrak{u}=\operatorname{Lie}(U)$ is given by: 
\begin{align*}
    \mathfrak{u}:=&\langle X_1,X_2,X_3,X_4,X_5,X_6,X_9,X_{12}\rangle
    \end{align*}
    with the nonzero brackets
    \begin{align*}
&[X_1,X_2]=X_4,[X_1,X_3]=X_5, [X_2,X_3]=X_6,\\
&[X_1,X_6]=X_9, 
[X_3,X_4]=X_{12}, [X_2,X_5]=X_9+X_{12}.
\end{align*}
(The indices of the generators indeed skip 7, 8, 10 and 11: this is reminiscent to the fact that $\mathfrak{u}$ is the quotient of the free $3$-step nilpotent Lie algebra on $3$ generators $\{X_1,X_2,X_3\}$ by the ideal generated by $[X_i,[X_i,X_j]]$ for $i\ne j\in \{1,2,3\}$).

Let $\mathfrak{a}:=\operatorname{Lie}(A)$ be the abelian Lie algebra on two generators $\langle T_1,T_2\rangle$, and $\mathfrak{g}_2:=\mathfrak{u}\rtimes \mathfrak{a}$ with the following non-zero  brackets: 
\begin{align*}   
  &[T_1,X_1]=-X_1,[T_1,X_3]=X_3,[T_1,X_4]=-X_4,[T_1,X_6]=X_6,\\
  &[T_2,X_1]=-X_1,[T_2,X_2]=2X_2,[T_2,X_3]=-X_3,[T_2,X_4]=X_4,[T_2,X_5]=-2X_5,\\ &[T_2,X_6]=X_6
\end{align*}

Let $\mathfrak{fil} =\langle E_1,E_2,E_3,E_4\rangle$ be the 4-dimensional filiform algebra, with non-zero brackets $[E_1,E_2]=E_3,[E_1,E_3]=E_4$. Define $\mathfrak{g}_3:=\mathfrak{g}_2\times \mathfrak{fil}$. The centre of $\mathfrak{g}_3$ is the product of the centres of the factors, which is $\langle X_9,X_{12},E_4\rangle$. Let $\mathfrak{z}$ be the $1$-dimensional subspace generated by the diagonal of the centre $Z:=X_9+X_{12}+E_4$. 

Our example is the simply connected Lie group $G$ whose Lie algebra is $\mathfrak{g}:=\mathfrak{g}_3/\mathfrak{z}$. We can write it in the basis 
$$\langle X_1,X_2,X_3,X_4,X_5,X_6,X_9,X_{12},T_1,T_2,E_1,E_2,E_3\rangle,$$

with all non-zero brackets exactly as in $\mathfrak{g}_3$, except for $[E_1,E_3]=-X_9-X_{12}$.

The group $G$ admits the following properties: 

\begin{itemize}
    \item $\rho_0(G)=G$, i.e. $G$ is in $ (\mathcal{C}_0)$.
    \item  $U=\operatorname{R}_{\exp} G$ and the short exact sequence 
    \[ 1 \to U \to G \to G/U \to 1 \]
    does not split. In particular $G$ is not in $(\mathcal C_1)$.
    \item $G$ is not generalized standard solvable.
    \item $G$ does not have a nonabelian nilpotent retract. 
    \item $G$ has a polynomially bounded Dehn function. 
    \item $G$ admits a cubically distorted central extension. 
\end{itemize}

We supply short reasoning for the above claims. Due to the high dimension of this group, we do not give the details of the computations. The reader may consult Section~\ref{sec: computation examples} for the relevant techniques.

The nilradical of $\mathfrak{g}$ is $\mathfrak{u}+\mathfrak{fil}/\langle Z\rangle$, it splits with  complement $\mathfrak{a}$ acting with only real eigenvalues. Therefore $G$ is in $(\mathcal{C}_0)$. 

The  exponential radical is $\mathfrak{u}$. The quotient $\mathfrak{g}/\mathfrak{u}$ is $\mathbf{R} ^2\times \mathfrak{heis}$, where in the above basis $\mathfrak{heis}=\langle E_1,E_2,E_3\rangle/\mathfrak{u}$ is the Lie algebra of the $3$-dimensional real Heisenberg group with central element $E_3\cdot \mathfrak{u}$. If $\mathfrak{u}$ did split, we would have $\mathfrak{g}=\mathfrak{u}\rtimes \mathfrak{m}$ with $\mathfrak{m}$ isomorphic to $\mathbf{R}^2\times \mathfrak{heis}$ and its action on $\mathfrak{u}$ would factor through the quotient. In particular, the central element of $\mathfrak{heis}$ in $\mathfrak{m}$, which is central in $\mathfrak{m}$, would act trivially on $\mathfrak{u}$. Therefore the centre of $\mathfrak{g}$ would intersect $\mathfrak{m}$ nontrivially. It can be verified however that the centre of $\mathfrak{g}$ is exactly $\langle X_9,X_{12}\rangle\subset \mathfrak{u}$.  

If, towards contradiction, $G$ was generalized standard solvable with nilpotent quotient $N$, then $\mathfrak{n}:=\operatorname{Lie}(N)$ would have to be a quotient of $\mathfrak{m}=\mathbf{R}^2\times \mathfrak{heis}$ (recall $G/\operatorname{R_{exp}(G)}$ is the largest nilpotent quotient of $G$). Therefore if $\mathfrak{n}$ is nonabelian, it must contain $\mathfrak{heis}$ and the same argument as above yields a contradiction. This moreover proves that $G$ does not retract to a nonabelian nilpotent group.  
If on the other hand $N$ was abelian, then by Proposition~\ref{prop: UA standard solvable can take u to be ExpRad} $G$ would have to split over the exponential radical, which is not the case. So $G$ is not generalized standard solvable.  

One may check that $G$ does not admit the SOL or $2$-homological obstructions, and therefore has a polynomial Dehn function~\cite[Theorem E]{CoTesDehn}. 

It is easily observed that $G_3$ (the Lie group corresponding to $\mathfrak{g}_3$) is a central extension of $G$. The generator of the extension is $Z=X_9+X_{12}+E_4$, which is in $C^{3}\mathfrak{g}_3$ but not in $C^4\mathfrak{g}_3$, i.e. $c_{Z}=3$. By Theorem~\ref{thm:distort}, $\langle Z\rangle $ is $n^3$-distorted in $G_1:=\rho_1(G)$, hence by Corollary~\ref{corIntro: distorion central extensions}, the Dehn function of $G$ is bounded from below by $n^3$. A direct computation of the second cohomology group proves that we cannot improve this lower bound using distortion in other central extensions.

To the best of our knowledge, the sharpest evaluation of the Dehn function of $G$ prior to our work is given by Theorem~\ref{thm: Dehn bounds via rho_1}. Concretely, it is easy to check that $\rho_1(\mathfrak{g})=\mathfrak{u}\rtimes (\mathfrak{a}\times \mathfrak{heis})$, where the only difference in brackets from $\mathfrak{g}$ is that $[E_1,E_3]=0$. In particular, the action of $\mathfrak{a}\times \mathfrak{heis}$ on the exponential radical $\mathfrak{u}$ is the same as in $\mathfrak{g}$ and so the corresponding Lie group $G_1=U\rtimes (A\times \operatorname{Heis})$ is generalized standard solvable in the sense of~\cite[Section 10.H]{CoTesDehn}. Theorem~\ref{thm: Dehn bounds via rho_1} yields: 
\[ n^3/\log^e(n) \preceq \delta_G(n) \preceq  n^4\cdot \log^e(n). \]

where $e\leq 8$ is twice the bound on the exponent of $\delta_G$.

Finally, we remark that $\operatorname{Kill}(\mathfrak{u})_0\ne 0$, so the upper bound of $n^4$ cannot be improved to $n^3$ using only~\ref{thm: 10.H.1}. Moreover, $\rho_1(G)$ is not generalized tame as can be seen by drawing the weight diagram, and using~\cite[Proposition 4.B.5]{CoTesDehn}.

\begin{remark} \label{rmk: logic of example}
    The logic behind the construction is extracted from the discussion at the end of Section~\ref{sec: comparing lower bounds}. We are looking for a group with quite a few properties: it must be completely solvable, must not split with a nilpotent quotient (in particular it should not be generalized standard solvable, hence not in  $(\mathcal{C}_1)$), its Dehn function must be polynomial, and its distorted central extensions must be necessary for giving a lower bound on its Dehn function (so for example it must not admit a left-invariant nonpositively curved Riemannian metric). In low dimensions and for the groups of class $(\mathcal{C}_0)$, the property of not splitting over the exponential radical is the hardest to come by. Cornulier's construction~\cite[Examples 4.1, 4.2]{CornulierDimCone} give such groups, and hints at how to obtain them in general. We vary his building blocks in order to assure that the group admits the other desired properties; the challenge of escaping the SOL obstruction is the main reason we chose~\cite[Example 1.5.4]{CoTesDehn} as a building block for our example. 

\end{remark}
 
\section{The solvable Lie groups of low dimensions and their QI-invariants}\label{sec: computations and contribution}

In Appendix~\ref{appendix: copmutations} we list all indecomposible completely solvable groups of exponential growth of dimensions 4 and 5, and compute their cone dimension  and Dehn functions. The results are organized in tables: cone dimension in Tables~\ref{tab:scr_expgrowth-dim4},~\ref{tab:scr_expgrowth}, Dehn function in Tables~\ref{tab:groupsless5prop} and~\ref{tab:Dehn-functions-5}.

We use the results of our computations in the proof of the QI-rigidity of $\operatorname{Sol}_5$ (Theorem~\ref{prop:qi-rigidity-intro}, see proof in Section~\ref{sec:qi-rigid}). In a different paper~\cite[Section 5]{productSBE} we use them in a slightly different manner, namely in order to find groups that share the same cone dimension and Dehn function. As two prominent quasiisometric invariants, one may expect such a list could find more applications in the context of the quasiisometric classification, at least in low dimensions.

We do not introduce new methods: our computations are based mostly on Cornulier and Tessera~\cite{CoTesDehn}, who develop various criteria for estimating Dehn functions. Their work is remarkable in its completeness: for example, it allows us to determine all Dehn functions within the class of groups we considered. 

Still, the computations themselves are technically demanding and require familiarity with~\cite{CoTesDehn} as well as with other works. With the aim of making this paper as self-contained as possible, along with the results we supply in the appendix short descriptions and examples for each criterion. These should allow the readers to familiarize themselves with the relevant definitions and techniques, and to reproduce the computations.

\begin{remark}
    The Dehn functions for all solvable Lie groups of dimension up to $5$ and polynomial growth follow from those of nilpotent groups of these dimensions, which were completely determined by Pittet \cite[Proposition 7.1]{PITTET_1997}. We restrict our computations to the groups of exponential growth in this paper.
\end{remark}

\subsection{Some particular families}\label{sec: particular families}

We discuss in detail two particular families, to indicate where some progress would be needed to complete their quasiisometry classification. We will say that a given completely solvable Lie group $G$ is QI-rigid within $(\mathcal C_0)$ if any group in $(\mathcal C_0)$ quasiisometric to it is isomorphic to $G$, and that a class $\mathcal{G}$ of groups is QI-complete within $(\mathcal C_0)$ if any group in $(\mathcal C_0)$ quasiisometric to a group in $\mathcal G$ is isomorphic to a group in $\mathcal G$. 
All notions and notations in the discussion below could be found in Appendix~\ref{appendix: copmutations}. 
\subsubsection{$G_{4,8}$ and the $G_{4,9}^\beta$ family}

\begin{figure}
    
\begin{tikzpicture}[line cap=round,line join=round,>=angle 45,x=1.2cm,y=1.2cm]
\clip (-4.5,-1.1) rectangle (5,1);

\draw[->,color=black] (-4.5,0) -- (4.5,0);

\draw[line width= 2pt] (-4,0) -- (0,0);

\draw (4.5,0) node[right]{\scriptsize $\beta$};

\fill (0,0) circle(2pt) node[anchor=south east] {\scriptsize $0$};
\fill (-4,0) circle(2pt) node[anchor=south east] {\scriptsize $-1$};
\fill (-2,0) circle(2pt) node[anchor=south east] {\scriptsize $-\frac{1}{2}$} node[anchor=north]{\scriptsize QI to $\mathbf R^2 \rtimes \operatorname{SL}(2,\mathbf R)$ } ;;
\fill (4,0) circle(2pt) node[anchor=south west] {\scriptsize $1$} node[anchor=north]{\scriptsize QI to $\operatorname{SU}(2,1)$} ;
\draw (4,-0.5) node[anchor=center]{\scriptsize QI-rigid};
\path[draw,decorate,decoration=brace] (0,0.2) -- (4,0.2)
node[midway,above]{\footnotesize QI-classified for $0<\beta<1$ \cite{CarrascoSequeira}};

\draw (-2,-1) node[stuff_fill]{\scriptsize $\delta_G\asymp \exp$ (SOL obstruction holds)};
\draw (2,-1) node[stuff_fill]{\scriptsize $\delta_G$ linear};

\end{tikzpicture}
    \caption{Together with $G_{4,8}$, the groups in the $G_{4,9}^{\beta}$ family are determined by the parameter $\beta \in (-1,1]$. The three classes of groups defined by $\beta >0$, $\beta=0$ and $\beta<0$ are quasiisometrically distinct. The groups with $\beta <0$ (with exponential Dehn function) have not been classified up to quasiisometry so far.}
    \label{fig:the-g4-9-family}
\end{figure}
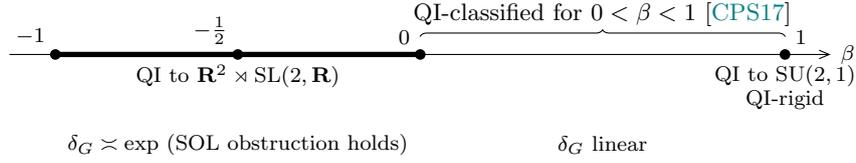

Together with $G_{4,8}$, the groups of the form $G_{4,9}^\beta$ may be represented on a line segment, so that the eigenvalues of $e_4$ acting on the abelianization of $\operatorname{R}_{\exp} \mathfrak g_{4,9}$ are $1$ and $\beta \in (-1,1]$ (see Figure~\ref{fig:the-g4-9-family}.) Note that $G_{4,8}$ is the limit case $\beta=-1$.
The group $G_{4,9}^{1}$ is the maximal completely solvable subgroup of $\operatorname{SU}(2,1)$; as such, it is QI-rigid within $(\mathcal C_0)$; See~\cite[Appendix A]{productSBE}. When $\beta >0$ the group $G_{4,9}^\beta$ is hyperbolic; the fact that the $\{G_{4,9}^\beta \colon \beta >0 \}$ is QI-complete within $(\mathcal C_0)$ can be deduced from \cite{KLDNG}, and the internal QI-classification is done by \cite{CarrascoSequeira}.
The group $G_{4,9}^0$ is not in $(\mathcal C_1)$ and its cone dimension is $2$. This is the only group in the family with this cone dimension.
When $\beta<0$ the cone dimension is again $1$ and the SOL obstruction holds, so that the Dehn function is exponential.

The group $G_{4,9}^{-1/2}$ is of particular interest, since it is quasiisometric to $G= \operatorname{SL}(2,\mathbf R) \ltimes \mathbf R^2$; precisely it is isomorphic to its subgroup $ANR$, where $R$ denotes the radical $\mathbf R^2$ and  $L=KAN$ denotes the Levi factor $\operatorname{SL}(2,\mathbf R)$ in the Levi decomposition $LR$ of $G$. De la Harpe \cite[IV.25.(viii)]{delaHarpeTopics} observed that the lattices in $G$ are nonuniform and asked whether there are finitely generated groups quasiisometric to $G$.

\begin{ques}
    Can one describe the space $\operatorname{QI}(G_{4,9}^\beta, G_{4,9}^{\beta'})$ for $\beta, \beta'<0$?
\end{ques}

In the special case $\beta=\beta'=-1/2$, this amounts to the knowledge of the group of self-quasiisometries of $G_{4,9}^{-1/2}$ and would likely shed light on de la Harpe's question mentionned above.

\subsubsection{The $G_{5,33}^{\alpha, \beta}$ family.}

\begin{figure}
    
\begin{tikzpicture}[line cap=round,line join=round,>=angle 45,x=1.2cm,y=1.2cm]
\clip (-4.5,-3) rectangle (4.5,3.5);

\draw[->,color=black] (-4.5,0) -- (2.5,0);
\draw[->,color=black] (0,-2.5) -- (0,2.5);

\draw[line width= 2pt] (-4.5,0) -- (0,0);
\draw[line width= 2pt] (0,-2.5) -- (0,0);

\draw (0,2.5) node[above]{\scriptsize $\beta$};
\draw (2.5,0) node[right]{\scriptsize $\alpha$};

\draw [ dash pattern = on 2pt off 2pt] (-1,2) -- (2,-1);

\draw (-2pt,2pt) -- (2pt,-2pt);
\draw (-2pt,-2pt) -- (2pt,2pt);

\fill (1,0) circle(2pt) node[anchor=north east] {\scriptsize $(1,0)$};

\draw (-2,0) node[stuff_fill]{\scriptsize $\delta_G\asymp \exp$};
\fill (-1,-1) circle(2pt) node[anchor=east]{\tiny $(-1,-1)$} ;
\draw (-1,-1.1) node[stuff_fill, anchor=north east] {\scriptsize (See Section \ref{sec:qi-rigid}) };

\draw (2.2,2.2) node[ stuff_fill]{\footnotesize QI-classified \cite{bourdon2023rhamlpcohomologyhigherrank}};
\draw [->] (2,2) -- (-0.5,1.5);
\draw [->] (2,2) -- (1.5,-0.5);

\fill (1,0) circle(2pt) node[anchor=north east] {\scriptsize $(1,0)$};
\fill (0,1) circle(2pt) node[anchor=south west] {\scriptsize $(0,1)$};

\draw (-2,0.5) node[stuff_fill]{\scriptsize $\delta_G(n) \asymp n^2$};

\draw (-2,-0.5) node{\scriptsize $\delta_G(n) \asymp n^2$};

\draw (2,-1.6) node[stuff_fill]{\scriptsize AW condition holds;};
\draw (2,-2) node[stuff_fill]{\scriptsize $\operatorname{Cone}_\omega G_{5,33}^{\alpha, \beta}$ contractible};

\draw (-2,-2) node{\scriptsize $\pi_2(\operatorname{Cone}_\omega G_{5,33}^{\alpha, \beta}) \neq 0$ \cite{Cornulieraspects}};

\end{tikzpicture}
    \caption{The groups in the $G_{5,33}^{\alpha, \beta}$ family are determined by the coordinates $(\alpha,\beta)$ of the third principal weight in the above weight diagram, so that one point in the plane represents a group; $G_{5,33}^{\alpha, \beta}$ and $G_{5,33}^{\alpha',\beta'}$ are isomorphic if and only if $\{\alpha, \beta\} = \{\alpha', \beta'\}$.  The group $G_{5,33}^{\alpha, \beta}$ with $\alpha = 0$ is decomposable.
    The groups in the three areas: the bottom-left quadrant, its boundary (bold-faced), and their complement (to the top and right) are quasiisometrically distinct.  $G_{5,33}^{-1,-1}$ is the group $\operatorname{Sol}_5$ discussed in Section~\ref{sec:qi-rigid}.}
    \label{fig:the-g5-33-family} 
\end{figure}
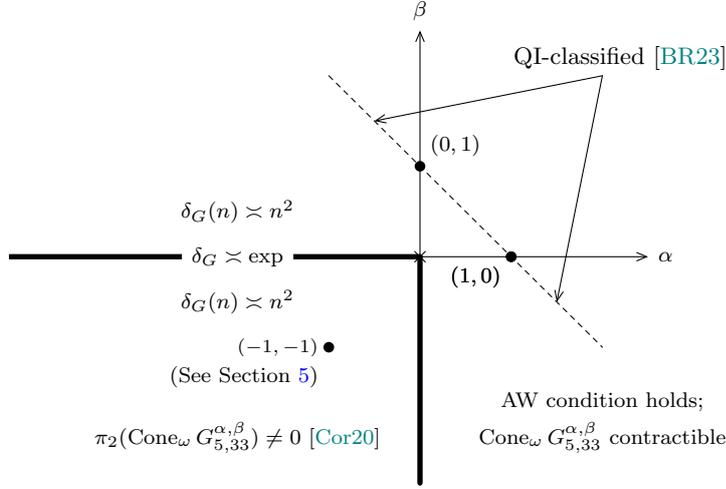

Groups of the form $G_{5,33}^{\alpha, \beta}$ may be represented on a plane, so that the three principal weights in the basis of $\operatorname{Hom}(\mathfrak a, \mathbf R)$ dual to $({e_4},{e_5})$ are $(1,0)$, $(0,1)$, and $(\alpha, \beta)$. See Figure \ref{fig:the-g5-33-family}.
This family is interesting because it exhibit various behaviours.

If $\alpha$ or $\beta$ is strictly positive, then $G_{5,33}^{\alpha, \beta}$ has the Azencott-Wilson criterion; as such, it has a quadratic Dehn function.
If moreover $\alpha + \beta =1$, then $G_{5,33}^{\alpha, \beta}$ contains $G_{4,5}^{1,1}$ (namely the $AN$ subgroup of $KAN=\operatorname{SO}(4,1)$) as a codimension $1$ subgroup. 
Bourdon and Rémy recently used this fact to compute critical exponents in $L^p$-cohomology for the groups within the line $\alpha+\beta=1$; they obtain that two such groups are quasiisometric if and only if they are isomorphic \cite[Theorem 1]{bourdon2023rhamlpcohomologyhigherrank}.

If $\alpha$ and $\beta$ are nonpositive, on the other hand, then $0$ lies in the convex hull of the set of principal weights, which changes drastically the geometry. If $\alpha$ or $\beta$ is zero, then $G_{5,33}^{\alpha, \beta}$ has the SOL obstruction (as $0$ lies in the segment between two principal weights), therefore its Dehn function is exponential. If $\alpha$ and $\beta$ are both negative, then the Dehn function is again quadratic, as we compute in Example~\ref{exm:computing-killing-g-5-33}. However, we can still distinguish the groups $G_{5,33}^{\alpha, \beta}$ with $\alpha, \beta<0$ from the other ones using the asymptotic cone: $\pi_2(\operatorname{Cone}_\omega G_{5,33}^{\alpha, \beta})$ is nontrivial for $\alpha, \beta<0$ (See \cite[p.9]{deCornulier2014aspects}; the asymptotic cone is $\mathbf T^3_{D(3)}$) while when $\alpha$ or $\beta$ are positive the asymptotic cone is contractible (since it is bilipschitz to a CAT(0) space). The group $G_{5,33}^{-1,-1}$ is the only unimodular group in the $G_{5,33}$ family; it is in Peng's class $(\mathcal P)$ (See Definition~\ref{def:peng-class}) and the description of its self-quasiisometries is given by \cite{PengCoarseI,PengCoarseII}. See Section~\ref{sec:qi-rigid} for more on Peng's class and the group $G_{5,33}^{-1,-1}$.

\section{Groups quasiisometric to \texorpdfstring{$\operatorname{Sol}_5:$ }: proof of Theorem~\ref{prop:qi-rigidity-intro}}
\label{sec:qi-rigid}

Eskin, Fisher and Whyte have conjectured that the class of virtually polycyclic groups should be quasiisometrically complete within finitely generated groups \cite[Conjecture 1.2]{EskinFisher}.
The first evidence for this came from the work of Shalom, who proved that any finitely generated group quasiisometric to a polycyclic group has a finite index subgroup with nonvanishing first Betti number \cite{ShalomHarmonic}.
Since every polycyclic group contains a finite-index subgroup which is a uniform lattice in a simply connected solvable Lie group, the quasiisometry classification conjecture for completely solvable Lie groups \cite[Conjecture 19.25]{CornulierQIHLC} would complement \cite[Conjecture 1.2]{EskinFisher} in that it would complete the internal QI-classification of polycyclic group. (Note that when we restrict attention to the smaller class of virtually nilpotent groups, we face a similar picture, but while the quasiisometric classification of simply connected nilpotent Lie groups is still open, Gromov's polynomial growth theorem \cite{GromovPolyG} can be taken as a replacement of the above conjecture of Eskin, Fisher and Whyte.)

Peng \cite{PengCoarseI, PengCoarseII} made significant progress towards both conjectures. To state her theorems we make the following definition. 

\begin{definition}
\label{def:peng-class}
    Let $G$ be a standard solvable group. We say that $G$ is of class $(\mathcal P)$ if the following holds:
    \begin{enumerate}
        \item $\mathrm{R}_{\exp}G$ is equal to the nilradical of $G$
        \item $\mathrm{R}_{\exp}G$ is abelian
        \item $G/\mathrm{R}_{\exp} G$ is abelian
        \item $G$ is unimodular.
    \end{enumerate}
\end{definition}

\begin{theorem}[{\cite[Corollary 5.3.7]{PengCoarseII}}]\label{thm:peng-classification}
    If two groups of class $(\mathcal P)$  are quasiisometric, then they are isomorphic.
\end{theorem}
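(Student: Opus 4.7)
The plan is to follow the program of Eskin--Fisher--Whyte for $\operatorname{Sol}_3$, generalized by Peng to the entire class $(\mathcal{P})$. Given groups $G, G'$ of class $(\mathcal{P})$, write $G = U \rtimes A$ with $U = \operatorname{R}_{\exp} G$ abelian and $A$ abelian; since $G$ is standard solvable, the $A$-action on $U$ diagonalizes over $\mathbf{R}$ with weights $\lambda_1, \dots, \lambda_n \in \mathfrak{a}^*$, and these weights sum to zero by unimodularity. The weights (up to a linear change of coordinates on $\mathfrak{a}^*$) determine the group up to isomorphism, so the problem reduces to recovering the combinatorial data of the weights from the large-scale geometry of the quasiisometry $\phi: G \to G'$.

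First I would equip $G$ with its natural pair of foliations: cosets of $U$ (horospheres, transverse to the $A$-direction) and cosets of $A$ (Cartan flats of dimension $\dim A$). Each weight $\lambda_i$ defines a height function on $A$ whose level sets encode the exponential distortion along the corresponding coordinate of $U$. The first technical step is coarse differentiation, in the style of \cite{EskinFisher,PengCoarseI}: at almost every scale and basepoint, $\phi$ is close to a map which sends $A$-flats to $A'$-flats and preserves the horospherical foliation by $U$-cosets, up to a permutation of the weight directions. This provides induced boundary maps between $U$-leaves.

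The core rigidity step is then to prove that each such boundary map between two abelian $U$-leaves, equipped with the weighted sub-Riemannian quasimetrics coming from the $\lambda_i$, is forced to be linear and to intertwine the weight data of $G$ with that of $G'$. This is a higher-rank analogue of Tukia's rigidity for quasi-symmetries with commuting one-parameter dilations: the several independent height functions impose enough constraints to kill the non-linear degrees of freedom. From the resulting identification of weights one extracts a Lie algebra isomorphism $\mathfrak{g} \to \mathfrak{g}'$, and integrates to the desired Lie group isomorphism.

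The hardest part will be the coarse differentiation step when $\dim A \geq 2$. One must control the behaviour of $\phi$ simultaneously with respect to several independent height functions, and show that the preservation of horospherical foliations holds on sets of positive density at infinitely many scales. This is where unimodularity (the sum-zero condition on weights) plays an essential role, as it ensures a balanced system of height functions analogous to the $\{1,-1\}$ condition for $\operatorname{Sol}_3$; the full argument occupies the bulk of \cite{PengCoarseI,PengCoarseII}.
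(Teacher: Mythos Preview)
The paper does not supply its own proof of this statement: it is quoted verbatim as \cite[Corollary 5.3.7]{PengCoarseII} and used as a black box in Section~\ref{sec:qi-rigid}. So there is nothing to compare against on the paper's side.

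Your sketch is a reasonable high-level summary of the Eskin--Fisher--Whyte/Peng program, and you correctly identify coarse differentiation as the main technical engine and unimodularity (the weights summing to zero) as the structural hypothesis that makes it work in rank $\geqslant 2$. Two small points of imprecision: since $U$ is abelian in class $(\mathcal P)$, the induced geometry on $U$-leaves is not sub-Riemannian but rather carries a family of weighted (quasi-)norms indexed by the weights; and the passage from ``coarse differentiation at most scales'' to ``$\phi$ globally respects the horospherical foliations'' is not a single step but the heart of Peng's two papers, requiring an intricate iterative scheme to upgrade from local-at-many-scales to global. Your final paragraph acknowledges this, but calling it ``the hardest part'' understates that this \emph{is} essentially the entire content of \cite{PengCoarseI,PengCoarseII}; what you have written is an outline of intent rather than a proof.
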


\begin{theorem}[{\cite[Corollary 5.3.9]{PengCoarseII}}]\label{thm:peng-rigidity}
    If a finitely generated group is quasiisometric to a group of class $(\mathcal P)$, then it is virtually polycyclic.
\end{theorem}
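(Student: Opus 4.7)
The plan is to combine Peng's classification theorem for quasiisometries of class $(\mathcal P)$ groups (Theorem~\ref{thm:peng-classification}) with a standard quasi-action-to-action promotion argument. Let $\Gamma$ be a finitely generated group and $\phi \colon \Gamma \to G$ a quasiisometry to a class $(\mathcal P)$ group $G$, with quasi-inverse $\psi \colon G \to \Gamma$.

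First, I would form the induced quasi-action of $\Gamma$ on $G$: for each $\gamma \in \Gamma$, the map $q_\gamma := \phi \circ L_\gamma \circ \psi$ is a self-quasiisometry of $G$, where $L_\gamma$ denotes left multiplication by $\gamma$ in $\Gamma$. As $\gamma$ varies the maps $q_\gamma$ form a uniform quasi-action, in the sense that $q_\gamma \circ q_{\gamma'}$ is at uniformly bounded distance from $q_{\gamma\gamma'}$ and $q_e$ is at bounded distance from the identity.

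The key input, stronger than Theorem~\ref{thm:peng-classification} itself but in fact underlying its proof, is the structural statement that every self-quasiisometry of a class $(\mathcal P)$ group $G$ lies at uniformly bounded distance from the composition of a left translation of $G$ with an automorphism preserving the weight decomposition on $\operatorname{R}_{\exp} G$. The unimodularity condition in the definition of class $(\mathcal P)$ forces any such weight-preserving automorphism to lie in a finite group $F$ of signed permutations of weights. Proving this structural statement is the main work in Peng's papers~\cite{PengCoarseI, PengCoarseII}, where it is established by adapting and substantially extending the coarse differentiation technology of Eskin--Fisher--Whyte originally developed for $\mathrm{Sol}_3$; this step is the main obstacle and cannot be shortcut.

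Once the structural statement is in hand, it yields a map $\gamma \mapsto (\sigma_\gamma, \alpha_\gamma) \in G \rtimes F$ that is a bounded-error quasi-morphism. Passing to the finite-index subgroup $\Gamma_0 = \{\gamma : \alpha_\gamma = e\}$ and applying a standard rigidification argument (e.g.\ averaging, or iterating and taking limits as in Mostow-type arguments) upgrades this to a genuine homomorphism $\Gamma_0 \to G$ with finite kernel --- consisting of those $\gamma$ for which $q_\gamma$ is uniformly bounded --- and discrete, cocompact image. Since every cocompact lattice in a simply connected solvable Lie group is polycyclic, $\Gamma_0$ is virtually polycyclic, and hence so is $\Gamma$.
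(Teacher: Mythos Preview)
The paper does not supply its own proof of this statement: Theorem~\ref{thm:peng-rigidity} is quoted verbatim from \cite[Corollary 5.3.9]{PengCoarseII} and used as a black box in the proof of Proposition~\ref{prop:qi-rigidity}. So there is no ``paper's proof'' to compare against; the question is only whether your sketch is a faithful outline of Peng's argument.

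It is not, and the gap is in your structural statement. You claim that every self-quasiisometry of a class $(\mathcal P)$ group $G$ is at bounded distance from a left translation composed with a weight-preserving \emph{automorphism}, and that unimodularity forces the automorphism part to lie in a finite group $F$. Both assertions are false already for $\mathrm{Sol}_3$. What Peng (following Eskin--Fisher--Whyte) actually proves is that self-quasiisometries are at bounded distance from \emph{standard maps}: maps that respect the height (the $A$-coordinate) up to translation and induce a bilipschitz self-map of each weight-space leaf. A standard map on $\mathrm{Sol}_3$ has the form $(x,y,t)\mapsto (f(x),g(y),t+c)$ with $f,g$ arbitrary bilipschitz self-maps of $\mathbf R$; taking $f(x)=x+\sin x$ gives a standard map not close to any translation-times-automorphism. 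Likewise, even among genuine automorphisms, the weight-preserving ones include independent scalings of the weight spaces and so do not form a finite group.

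Because of this, the remainder of your outline collapses: there is no finite group $F$ to pass to a finite-index kernel, and the ``standard rigidification'' of a quasi-morphism into a solvable Lie group is not routine (the target is non-abelian and the bounded-cohomology trick does not apply). Peng's actual route from standard maps to virtual polycyclicity uses the induced uniform quasi-actions on the weight-space foliations, together with amenability of $\Gamma$ (inherited from $G$) and structure results for groups acting bilipschitzly and coboundedly on $\mathbf R^n$, to extract the desired virtual lattice embedding; this is genuinely additional work beyond the coarse-differentiation step.
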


\begin{remark}
The statement of Theorem~\ref{thm:peng-classification} in~\cite{PengCoarseII} is different and involves real parts of Jordan form of the adjoint action. This is because Peng's definition of class  $(\mathcal{P})$ is more general: the groups are not assumed to be standard solvable, and in particular need not be in $(\mathcal{C}_0)$ a priori.  With the current formulation it is easy to see that \cite[Corollary 5.3.7]{PengCoarseII} implies \cite[Corollary 5.3.8]{PengCoarseII}.
\end{remark}

The completely solvable groups of class $(\mathcal P)$ and dimension less or equal $5$ are $G_{3,4}$, $G_{4,2}^{-2}$, $G_{4,5}^{-(1+\delta)/2, -(1-\delta)/2}$ for $0 \leqslant \delta < 1$,
$G_{5,7}^{\alpha, \beta, \gamma}$ with $\alpha+\beta+\gamma=1$, $G_{5,9}^{-1-\delta, -1+\delta}$ with $0\leqslant \delta <1$, $G_{5,11}^{-3}$, $G_{5,15}^{-1}$, and $G_{5,33}^{-1,-1}$, which is the group $\mathrm{Sol}_5$. 

Using Peng's rigidity theorem and our Dehn functions computations (Appendix~\ref{appendix: copmutations}), we can prove that the finitely generated groups quasiisometric to $\mathrm{Sol}_5$ are almost lattices in this group.

\begin{proposition}
\label{prop:qi-rigidity}
    Let $\Gamma$ be a finitely generated group quasiisometric to $\mathrm{Sol}_5$. Then there is a finite-index subgroup $\Gamma_0$ in $\Gamma$ and a homomorphism $\Gamma_0 \to \mathrm{Sol}_5$ with finite kernel and closed co-compact image.
\end{proposition}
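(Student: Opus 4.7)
The plan is to combine Peng's rigidity theorem (Theorem~\ref{thm:peng-rigidity}) with the Dehn function classification of Proposition~\ref{PropIntro: Dehn functions of dim 4 5}, followed by a lattice-descent argument. First I would apply Theorem~\ref{thm:peng-rigidity}, noting that $\mathrm{Sol}_5 = G_{5,33}^{-1,-1}$ belongs to Peng's class $(\mathcal P)$ as recorded in the list just above, to conclude that $\Gamma$ is virtually polycyclic. By Mostow's embedding theorem I would then pass to a finite-index torsion-free subgroup $\Gamma_0 \leqslant \Gamma$ that embeds as a cocompact lattice in a simply connected solvable Lie group $S$; since the Hirsch length of $\Gamma_0$ matches both $\dim S$ and $\dim \mathrm{Sol}_5 = 5$ (each being the dimension of the common asymptotic cone), one has $\dim S = 5$.

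Next I would identify the trigshadow $\rho_0(S) \in (\mathcal C_0)$. By Theorem~\ref{th:Cornulier-thm} it is quasiisometric to $S$, hence to $\mathrm{Sol}_5$, so it has dimension $5$, cone-dimension $2$, and quadratic Dehn function (using the results of Dru\c{t}u and Leuzinger--Pittet together with quasiisometry invariance of $\delta_G$ up to $\asymp$). Since $S$ admits a cocompact lattice it is unimodular, and unimodularity is inherited by $\rho_0(S)$ via the common simply transitive action on $(S,g_{\max})$. Proposition~\ref{PropIntro: Dehn functions of dim 4 5} together with Table~\ref{tab:Dehn-functions-5} will then show that $\mathrm{Sol}_5$ is the only group in $(\mathcal C_0)$ exhibiting these four invariants, forcing $\rho_0(S) \cong \mathrm{Sol}_5$ as Lie groups.

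Finally, to extract the homomorphism I would use that, by the Jablonski/Gordon--Wilson construction, $\rho_0(S) \cong \mathrm{Sol}_5$ is a closed normal subgroup of the connected isometry group $\widehat S$ of $(S,g_{\max})$, with compact quotient $K$. As $\Gamma_0$ is cocompact in $S$ and $S$ is cocompact in $\widehat S$, the subgroup $\Gamma_0$ is a cocompact lattice in $\widehat S$; its image in $K$ is then a discrete subgroup of a compact Lie group, hence finite. Setting $\Gamma_0' := \Gamma_0 \cap \mathrm{Sol}_5$ and invoking the standard descent lemma for lattices modulo closed normal subgroups, $\Gamma_0'$ is a cocompact lattice in $\mathrm{Sol}_5$ of finite index in $\Gamma_0$ (hence in $\Gamma$). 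The inclusion $\Gamma_0' \hookrightarrow \mathrm{Sol}_5$ is the desired homomorphism, with trivial kernel and closed cocompact image.

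The hardest part is the uniqueness in the second paragraph: verifying that no other $5$-dimensional unimodular completely solvable Lie group of cone-dimension $2$ has quadratic Dehn function. This is precisely the substance of the exhaustive case analysis in the appendix via the Cornulier--Tessera criteria. The descent argument in the last step also requires some care, since the image of a lattice in a quotient is not automatically closed, but in our setup the compactness of $K$ together with cocompactness of $\Gamma_0$ in $\widehat S$ ensures the projection to $K$ lands in a finite subgroup.
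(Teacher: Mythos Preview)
Your overall strategy matches the paper's: apply Peng's theorem to get virtual polycyclicity, realize a finite-index subgroup as a lattice in a simply connected solvable Lie group $S$, then use unimodularity together with the Dehn-function tables to pin $S$ down. The paper, however, takes a more direct route in the identification step, and your final descent step has a gap.

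The paper does not pass to $\rho_0(S)$. It observes that $S$ itself (dimension $5$, cone-dimension $2$, unimodular because it carries a lattice) must appear in Tables~\ref{tab:scr_expgrowth}--\ref{scr_expgrowth_ctd}, which list \emph{all} simply connected solvable groups of dimension $5$, not only the completely solvable ones. Running through the unimodular entries with cone-dimension $2$ and checking Dehn functions leaves only $G_{5,33}^{-1,-1}=\mathrm{Sol}_5$, so $S\cong\mathrm{Sol}_5$ on the nose, and the desired homomorphism is already supplied by Raghunathan's theorem \cite[Theorem 4.28]{RagDS}. Your detour via $\rho_0(S)$ forces you afterwards to produce a map from $\Gamma_0\leqslant S$ into the \emph{different} subgroup $\rho_0(S)\subset\widehat S$, which is where the trouble arises.

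Your descent asserts that $\rho_0(S)\cong\mathrm{Sol}_5$ is \emph{normal} in the connected isometry group $\widehat S$. The Jablonski/Gordon--Wilson construction does not furnish this: it only says that $\rho_0(S)$ acts simply transitively, and simply transitive subgroups of isometry groups are generally not normal. Without normality, $\Gamma_0\cap\mathrm{Sol}_5$ could well be trivial (since $\Gamma_0\subset S$ and $S,\mathrm{Sol}_5$ are a priori distinct transitive subgroups). In fact one can show $\widehat S^{\,0}=\mathrm{Sol}_5$ here, which would force $S=\mathrm{Sol}_5$ directly and make the descent unnecessary, but that is a separate argument you do not give. A minor slip: the equality $\dim S=5$ comes from the asymptotic (Assouad--Nagata) dimension being a QI-invariant \cite{HigesPeng}, not from the dimension of the asymptotic cone, which is $2$ for $\mathrm{Sol}_5$.
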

\begin{remark}
    The group $\mathrm{Sol}_5$ does have lattices - see Remark~\ref{Rem: finite-index-subgroup} below.
\end{remark}

\begin{remark}\label{Rem: finite-index-subgroup}
    Passing to a finite index subgroup is necessary, as the following example shows. Let $K$ be a number field of degree $3$ with Galois group $\Sigma = \operatorname{Sym}_3$ and consider the group $\Lambda = \operatorname{PSL}(2,\mathcal O_K)$, which embeds as a non-uniform $\mathbf Q$-rank one lattice in $X = \mathbb H^2 \times \mathbb H^2 \times \mathbb H^2$. The group $\Sigma$ operates by automorphisms on $\Lambda$; these extend as conjugations in $G=\operatorname{Isom}(X)$ by maps of $\mathbb H^2 \times \mathbb H^2 \times \mathbb H^2$ permuting the three factors.
    If $\mathcal{H}$ is a horosphere bounding a cusp in the quotient $\Gamma \backslash X$, then by \cite[Proposition 2.1(3)]{PrasadStrongRigidity}, $\Lambda \rtimes \Sigma$ intersects $\operatorname{Isom}(\mathcal H)$ in a uniform lattice of the latter group; let us denote this lattice by $\Gamma$. One may write $\Gamma = \Gamma_0 \rtimes \Sigma$, where $\Gamma_0$ is a lattice in $\operatorname{Sol}_5 < \operatorname{Isom}(\mathcal H)$. Thus $\Gamma$ is quasiisometric to $\operatorname{Sol}_5$. However, if there was $\phi\colon \Gamma \to \operatorname{Sol}_5$ a group homomorphism with finite kernel, then
    $\ker \phi$ would contain $\Sigma$ (which is torsion), but intersect $\Gamma_0$ trivially, since $\Gamma_0<\operatorname{Sol}_5$ has no non-trivial finite subgroup; it would follow that $\ker \phi = \Sigma$ and $\Gamma$ would split as a direct product, which is not the case. 
\end{remark}

\begin{proof}
The proof is done by exhaustion.
    The group $\mathrm{Sol}_5$ is of class $(\mathcal P)$, so that by Peng's rigidity theorem~\ref{thm:peng-rigidity} we know that $\Gamma$ is virtually polycyclic; let $G$ be a simply connected solvable group such that there exists a finite-index subgroup of $\Gamma$ that surjects with finite kernel onto a uniform lattice in $G$  \cite[Theorem 4.28]{RagDS}. 
    We know that $G$ is quasiisometric to $\mathrm{Sol}_5$, so that $\dim G=5$ and $\operatorname{conedim} G =2$. 
    Therefore $G$ is among the following list of groups:
    \[ G_{5,19}^{0,\beta},\ G_{5,19}^{1,\beta},\ G_{5,20}^0,\ G_{5,20}^1,\ G_{5,27},\ G_{5,28}^1,\ G_{5,30}^1,\ G_{5,32}^0,\ G_{5,32}^{\alpha},\ G_{5,33}^{\alpha,\beta}, G_{5,34},\ G_{5,35},  \]
    \[ G_{5,36},\ G_{5,37},\ G_{4,2}^{\alpha} \times \mathbf R, G_{4,4} \times \mathbf R, G_{4,5}^{\alpha, \beta} \times \mathbf R,\ G_{4,7} \times \mathbf R,\ G_{4,8} \times \mathbf R, G_{4,9}^0 \times \mathbf R,\ G_{4,9}^\beta \times \mathbf R. \]
    We can rule out most of the groups in this list since they are not unimodular. The unimodular ones are
    \[ G_{5,19}^{1,-2},\ G_{5,20}^0,\ G_{5,33}^{-1,-1},\ G_{5,35}^{0,-2},\ G_{4,2}^{-2} \times \mathbf R,\ G_{4,5}^{\alpha, \beta \colon \alpha+\beta=-1} \times \mathbf R,\ G_{4,8} \times \mathbf R. \]
    By our work in Tables \ref{tab:groupsless5prop}-\ref{tab:Dehn-functions-5} all these except $G_{5,33}^{-1,-1} \simeq \mathrm{Sol}_5$ have exponential Dehn function; the Dehn function of $G$ is quadratic, since it is quasiisometric to $\mathrm{Sol}_5$ (by \cite[Theorem 1.1]{DrutuFillingSol}, Leuzinger-Pittet \cite[Theorem 2.1]{LeuzingerPittetQuadratic}, or the computation in Example~\ref{exm:computing-killing-g-5-33}, using Cornulier and Tessera's \cite[Theorem F]{CoTesDehn}). So $G$ must be isomorphic to $\mathrm{Sol}_5$.
\end{proof}

\begin{remark}
    According to Peng \cite[Corollary 5.3.11]{PengCoarseII}, a completely solvable group quasiisometric to $\mathrm{Sol}_5$ must be a semidirect product of the form $\mathbf R^2 \ltimes \mathbf R^3$. This would allow to rule out some of the groups above without estimating Dehn functions, but it does not rule out $G_{4,5}^{\alpha, 1-\alpha} \times \mathbf R$. 
\end{remark}

\begin{proposition}\label{prop:qi-rididity-C0}
    Let $G$ be a completely solvable group, quasiisometric to $\mathrm{Sol}_5$. Then $G$ is isomorphic as a Lie group to $\mathrm{Sol}_5$. 
\end{proposition}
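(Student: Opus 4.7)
The plan is to mirror the exhaustion argument of Proposition~\ref{prop:qi-rigidity}, adapted to the Lie-group setting: since $G$ is already a simply connected Lie group, Peng's polycyclicity theorem~\ref{thm:peng-rigidity} is no longer needed, and one runs the same classification directly against the tables of Appendix~\ref{appendix: copmutations}.

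First I would use the fact that topological dimension and cone dimension are QI-invariants among simply connected solvable Lie groups (after Cornulier~\cite{CornulierDimCone}) to force $\dim G = 5$ and $\operatorname{conedim} G = 2$. This narrows $G$ down to precisely the list of candidate groups written out in the proof of Proposition~\ref{prop:qi-rigidity}. Next, since the Dehn function is a quasiisometry invariant up to $\asymp$, and $\delta_{\mathrm{Sol}_5}(n) \asymp n^2$ by \cite{DrutuFillingSol, LeuzingerPittetQuadratic} (or by the computation in Example~\ref{exm:computing-killing-g-5-33}), one has $\delta_G \asymp n^2$, which by Tables~\ref{tab:groupsless5prop}--\ref{tab:Dehn-functions-5} rules out every candidate whose Dehn function is exponential.

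The main step is then to rule out the remaining non-unimodular candidates with quadratic Dehn function (most notably inside the $G_{5,33}^{\alpha,\beta}$-family with $\alpha$ or $\beta$ positive, but also some semidirect products of the form $G_{4,\ast} \times \mathbf R$). For this I would establish that $G$ must be unimodular. The cleanest way is to invoke the QI-invariance of unimodularity within the class of simply connected completely solvable Lie groups. Once unimodularity is granted, the short list
\[ G_{5,19}^{1,-2},\ G_{5,20}^0,\ G_{5,33}^{-1,-1},\ G_{5,35}^{0,-2},\ G_{4,2}^{-2} \times \mathbf R,\ G_{4,5}^{\alpha,\beta:\,\alpha+\beta=-1} \times \mathbf R,\ G_{4,8} \times \mathbf R \]
appearing already in the proof of Proposition~\ref{prop:qi-rigidity} contains only $G_{5,33}^{-1,-1} = \mathrm{Sol}_5$ with non-exponential Dehn function, so $G \cong \mathrm{Sol}_5$.

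The hard part will be producing unimodularity of $G$ from the quasiisometry alone. An alternative that sidesteps this is to check that $G$ lies in Peng's class $(\mathcal P)$ (standard solvable, abelian exponential radical equal to the nilradical, abelian quotient, unimodular), and then appeal directly to Theorem~\ref{thm:peng-classification} applied to the pair $(G, \mathrm{Sol}_5)$; the verification of the structural properties of class $(\mathcal P)$ still relies on matching algebraic invariants from the tables, so the two approaches essentially coincide.
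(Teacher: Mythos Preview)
Your strategy is exactly the paper's: reduce to the finite list via $\dim$ and $\operatorname{conedim}$, cut down by unimodularity, then eliminate everything but $\mathrm{Sol}_5$ using the Dehn function tables. The only substantive gap is the step you yourself flag as ``the hard part'': you assert QI-invariance of unimodularity among completely solvable groups without supplying an argument, and your proposed alternative (verify that $G$ is in class $(\mathcal P)$ and apply Theorem~\ref{thm:peng-classification}) is circular, since unimodularity is one of the defining conditions of class $(\mathcal P)$.

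The paper closes this gap via geometric amenability. $\mathrm{Sol}_5$ is amenable and unimodular, hence geometrically amenable; geometric amenability is a QI-invariant of locally compact groups \cite[\S 11, especially Corollary~11.13]{TesseraLSSobolev}; and for completely solvable groups, geometric amenability is equivalent to unimodularity. This three-line argument is the missing ingredient: once you have it, the rest of your outline goes through verbatim and coincides with the paper's proof.
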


\begin{proof}
The group $\mathrm{Sol}_5$ is amenable and unimodular, hence geometrically amenable, and geometric amenability is invariant under quasiisometry, so that $G$ must be geometrically amenable (See \S 11 in \cite{TesseraLSSobolev} and especially Corollary 11.13 therein). Moreover, since $G$ is completely solvable, it is geometrically amenable if and only if it is unimodular. So $G$ is unimodular as well, and belongs to the list of groups already considered in the proof of Proposition~\ref{prop:qi-rigidity}. The end of the proof is the same as that of Proposition~\ref{prop:qi-rigidity}.
\end{proof}

\appendix 
\renewcommand{\thesection}{\Alph{section}}
\renewcommand{\thesubsection}{\Alph{section}.\arabic{subsection}}

\section{Computing Dehn Functions of Completely Solvable Groups up to Dimension 5}  \label{appendix: copmutations}
\label{sec: Presenting format of tables and Dehn function criteria}

We begin by presenting the format of the tables and some basic terminology and context. The tools for Dehn function computations are presented in Section~\ref{sec: Dehn functions}, and elaborated examples with detailed explanations on the computations are given in Section~\ref{sec: computation examples}.

By dimension we mean the dimension of the Lie algebra over $\mathbf R$; for the simply connected solvable Lie groups this is also the asymptotic Assouad-Nagata dimension (\cite{HigesPeng}) so that, for instance, the family of completely solvable groups of dimension $5$ is QI-complete within $(\mathcal C_0)$.

By cone dimension, we mean the covering dimension of any asymptotic cone; it is given by Cornulier's formula \cite{CornulierDimCone}, and for simply connected solvable Lie groups, it is exactly the codimension of the exponential radical.
The cone dimension is obviously a quasiisometry invariant, so that the first refinement of the simply connected solvable groups in QI-complete families is the ordered pair of positive integers
\[ (\mathrm{conedim},\, \mathrm{dim}). \]
The cone dimensions of Lie groups up to dimension 4 was computed and tabulated by Kivioja, Le Donne and Nicolussi Golo \cite[Table 1]{KLDNG}. The same authors also listed the simply connected solvable Lie groups $G$ of polynomial growth of dimension $5$, and their associated $\rho_0(G)$ (that they call the real shadow of $G$).

The solvable Lie groups (or more precisely the real solvable Lie algebras) of dimensions $4$ and $5$ were completely classified by Mubarakzyanov \cite{Mubarakzyanov}; the list is also available in the more accessible \cite{PateraZassenhaus}.  We list in Table \ref{tab:scr_expgrowth-dim4} the groups $G$ in $(\mathcal C_0)$ and of dimension $2$ to $4$ and of exponential growth that do not split in direct product and their associated $\rho_1(G)$ in $(\mathcal C_1)$ (the cone dimensions can be found in \cite{KLDNG}).
In Tables \ref{tab:scr_expgrowth}--{\ref{scr_expgrowth_ctd}} we continue the list in dimension 5 to all indecomposable simply connected solvable groups of exponential growth $G$, and compute their associated $\rho_0(G)$; we also compute $\rho_1(G)$ and list the cone dimension.
There are 39 families of indecomposable real five-dimensional solvable Lie algebras, including 18 with parameters.
In \cite{Mubarakzyanov} they are named $g_{5,i}$ for $1\leqslant i \leqslant 39$. For $1\leqslant i \leqslant 7$, $g_{5,i}$ is nilpotent and for $i \in \{14,17,18,26 \}$ and certain particular values of the parameters, the corresponding group has polynomial growth; we deliberately exclude them from our tables, since the computation of $\rho_0$ was done for them in \cite[Table 3]{KLDNG}.

In order to ease the determination of whether a given irreducible simply connected solvable group belongs to $(\mathcal C_0)$ or $(\mathcal C_1)$, we always list the group in the rightmost possible column. For instance for some group $G$ in $(\mathcal C_1)$, the column below $G$ and (in Tables~\ref{tab:scr_expgrowth}--\ref{scr_expgrowth_ctd}) the column $\rho_0(G)$ will be left blank, only the column $\rho_1(G)$ will be filled with $G$.

The structure of the Lie algebra is given in \cite{Mubarakzyanov} and \cite{PateraZassenhaus} as a list of nonzero brackets; however this is not quite convenient when it comes to computing $\rho_1(G)$, and checking our computations.
It turns out that in all cases but two, namely $G_{5,38}$ and $G_{5,39}$, the nilradical is split, and the Lie algebra decomposes as $\mathfrak n \rtimes \mathbf R$ or $\mathfrak n\rtimes \mathbf R^2$, where the Lie algebra $\mathfrak n$ of the nilradical can be either $\mathbf R^d$ for $d \in \{1,\ldots, 4 \}$, the Lie algebra $\mathfrak{heis}$ of the Heisenberg group, the Lie algebra $\mathfrak{fil}$ of the 4-dimensional filiform group, or a product of $\mathfrak{heis}$ with an abelian ideal of dimension $1$. We fix bases $(e_1, \ldots, e_d)$ for all the Lie algebras among the former, in the following way: $(e_1,e_2,e_3)$ is a basis of $\mathfrak{heis}$ in which $[e_1,e_2] = e_3$ and $e_3$ is central, $(e_1,e_2,e_3,e_4)$ is the basis of $\mathfrak{fil}$ in which $[e_1,e_2] = e_3$, $[e_1,e_3] = e_4$ and $e_4$ is central; when we write the product $\mathbf R \times \mathfrak{heis}$ the nonzero bracket is $(e_2, e_3) = e_4$ while when $\mathfrak{heis} \times \mathbf R$ the nonzero bracket is $(e_1, e_2) = e_3$. 
To denote the torus of derivations, we write 
$\Delta(\mathbf b_1, \ldots, \mathbf b_r)$ for a derivation of $\mathfrak n$ which has diagonal blocks $\mathbf b_1$, \ldots ,  $\mathbf b_r$. By block we mean one of the following:
\begin{itemize}
    \item a scalar block corresponding to an eigenspace of eigenvalue $\lambda \in \mathbf R$ which we write $\mathbf b_i = (\lambda)$.
    \item a complex scalar block corresponding to 
    \[ \begin{pmatrix} \sigma & \tau \\ - \tau & \sigma \end{pmatrix} \]
    which we write $(\sigma \pm i\tau)$.
    \item a non-scalar irreducible Jordan block of a generalized eigenspace of eigenvalue $\lambda$, which we write $(\lambda^s)$ where $s$ is the dimension, e.g. $(2^2)$ corresponds to the block  
    \[ \begin{pmatrix} 2 & 1 \\ 0 & 2 \end{pmatrix} \]
\end{itemize}
Finally,
\[ \mathfrak g = \mathfrak n \rtimes \{ \Delta(\mathbf b_1, \ldots ,\mathbf b_{r_i}) \}_i \]
denotes the semidirect product of $\mathfrak n$ by the derivations listed.
For a few real Lie algebras of dimension 5, the description above is not possible and we provide it separately.

The parameters in our table are in the same range\footnote{In a few cases, a relevant range for the parameters is not clearly indicated in \cite{PateraZassenhaus}, but the invariants given there can be used to determine one. We provide explicit ranges here.} and order as in \cite{PateraZassenhaus}, who took the list in \cite{Mubarakzyanov}, but denoted $A_{5,i}^{a,b,c...}$ the corresponding algebras with parameters; however we used Greek letters for the parameters, and sometimes used different letters; this is in order to ``type'' the parameters, for instance $\tau$ always denotes an imaginary part and $\epsilon$ is a sign. We normalize as much as possible to reduce the number of parameters when applicable, but we did not reparametrize.

\begin{remark}
We did not find the Lie algebra named $\mathfrak s_{5,26}$ in \cite[p.237]{WinternitzSnobl}, for the value of the parameter $a=1$, in \cite{Mubarakzyanov} nor in \cite{PateraZassenhaus}. We named the corresponding group $S_{5,26}^1$ in our table; that is our only departure from the taxonomy of Mubarakzyanov.
    In addition, there is an entry in  \cite{PateraZassenhaus}, named $A_{5,40}$ and marked there as solvable, however it turns out that it has a nontrivial Levi decomposition; this is $\mathfrak{sl}(2,\mathbf R) \ltimes \mathbf R^2$ with the tautological representation of $\mathfrak{sl}(2,\mathbf R)$.
    There are two quasiisometry classes of Lie groups with this Lie algebra:
    \begin{enumerate}
        \item The simply connected Lie group $$ \widetilde{\mathrm{SL}(2,\mathbf R)} \ltimes \mathbf R^2$$ whose QI type is that of $\mathbf R \times G_{4,8}$;
        \item The connected Lie group $\operatorname{SL}(2,\mathbf R) \ltimes \mathbf R^2$, whose QI type is that of $G_{4,9}^{-1/2}$; we discuss this further in subsection \ref{sec: particular families}.
    \end{enumerate}
    Since they are not solvable, we do not list these groups. Otherwise, we found a few less serious inconsistencies:
    in \cite{PateraZassenhaus} one should not have a parameter $c$ in the definition of $G_{5,8}$; in $G_{5,9}^{b,c}$ we found the condition $bc \neq 0$ to be missing in both \cite{Mubarakzyanov} and \cite{PateraZassenhaus}; in the definition of $G_{5,26}^{p, \epsilon}$, resp. of $G_{5,33}^{a,b}$ one should assume $p >0$, resp. $a \leqslant b$ to avoid redundancy.
\end{remark}

\begin{remark}\label{example: non-split radical in dim 5}
    The exponential radical of $G_{5,20}^0$ is not split. This group $G_{5,20}^0$ is the group named $G$ in \cite[Example 4.2]{CornulierDimCone}. There are no completely solvable groups for which the exponential radical does not split in dimension four, so that Cornulier's example is minimal for the dimension.
    It follows from our study that $G_{5,20}^0$ is the only such group in dimension $5$, so the six-dimensional $\mathbf Q$-algebraic group with no $\mathbf Q$-split torus that has a non-split exponential radical given in \cite[Example 4.1]{CornulierDimCone} in $(\mathcal C_0)$ is minimal for the dimension among groups with all these properties.
\end{remark}

\subsection{Dehn functions}\label{sec: Dehn functions}

In tables~\ref{tab:groupsless5prop}--\ref{tab:Dehn-functions-5} we compute, as accurately as we can,  the Dehn functions of the groups listed in Tables~\ref{tab:scr_expgrowth-dim4}-\ref{scr_expgrowth_ctd}. To this end we used the following list of criteria: 

\begin{itemize}
    \item Gromov-hyperbolicity \cite{Heintze}; 
    \item Azencott-Wilson criterion~\cite{azencott1976homogeneous};
    \item Standard solvability~\cite{CoTesDehn};
    \item SOL obstruction~\cite{CoTesDehn};
    \item $2$-homological obstruction~\cite{CoTesDehn};
     \item Vanishing of the zero weight subspace in the Killing module~\cite{CoTesDehn};
     \item Bound for generalized tame groups~\cite{CoTesDehn}; 
    \item The distortion of all central extensions as Lie group (Section~\ref{sec: distortion in Extension to Dehn}).

\end{itemize}
To the best of our knowledge, this list exhausts the general, or algorithmic, criteria for estimating Dehn functions. For each group the table states how we obtain this Dehn function, so our computations could be easily verified. We now explain how we obtain our estimates based on the above criteria. 
\begin{itemize}
    \item The Azencott-Wilson criterion checks whether a Lie group acts simply transitively on a non-positively curved Riemannian manifold \cite{azencott1976homogeneous}. In particular such groups have Dehn function at most quadratic. 
    \item The definitions of \emph{standard solvable}, \emph{{\rm SOL} obstruction} and \emph{$2$-homological obstruction} are given by Cornulier and Tessera. They prove the following result for a completely solvable group $G$ (\cite{CoTesDehn}, Theorem E): 
    \begin{itemize}
        \item $G$ has exponential Dehn function if and only if it admits either the SOL obstruction or the $2$-homological obstruction. Otherwise its Dehn function is polynomially bounded. 
        \item If $G$ does not satisfy the SOL or $2$-homological obstruction and is standard solvable, then it has at most cubic Dehn function.  
    \end{itemize}
    The group $G$ is \emph{generalized tame} if it can be written as $G=U\rtimes N$, N nilpotent compactly generated with some element $c\in N$ acting as a compaction on $U$ (see~\cite[Section 6.E]{CoTesDehn}). Theorem~6.E.2 of~\cite{CoTesDehn} states that in this case, the Dehn functions of $G$ and $N$ are almost equivalent:  $\delta_N\preccurlyeq\delta_G \preccurlyeq \widehat{\delta_N}$, where $\widehat{\delta_N}$ is any regular function larger than $\delta_N$ (see Theorem~\ref{Introthm: Dehn bounds via rho_1} for the definition of a regular function). In all cases of completely solvable groups of dimensions 4 and 5, $\widehat{\delta_N}$ can be taken to equal $\delta_N$ so $\delta_G\asymp \delta_N$.
    
    \item For standard solvable groups there are two ways of concluding they have at most quadratic Dehn function. The first is  using Theorem D in~\cite{CoTesDehn} stating that for some strong version of standard solvable groups, not having the SOL obstruction implies a quadratic upper bound on the Dehn function.  The second is using Theorem~10.E.1 in~\cite{CoTesDehn}, which involves computing the zero weight subspace in  the \emph{Killing module}. Finally, if a standard solvable group has a co-dimension 1 exponential radical and the group does not satisfy the SOL obstruction, it is hyperbolic (\cite{CoTesDehn}, Corollary E.3.a).

    \item Section~\ref{sec: distortion in Extension to Dehn} allows to derive Dehn function estimates for a group $G$ using its possible central extensions. The $2$-cohomology of a group classifies its central extensions. We compute it, and for each possible extension check its distortion. In the tables we list the names of the cohomology classes that give the maximal degree distortions. The name of a cohomology class is given with respect to the ordered basis in which the group is presented in tables~\ref{tab:scr_expgrowth-dim4}--\ref{tab:scr_expgrowth}. 
    
\end{itemize}

For readability we only write down the Dehn function and the Reason column, that depicts which criteria were used to concluded the Dehn function. The reason is enough in order to completely determine which criteria led us to this decision. For example in $G_{5,10}$ we write $n^4$ D.Ex $\{\omega_{15},\omega_{23}\}$. This means that the distorted central extensions corresponding to the cohomology classes of $\omega_{15}$ and $\omega_{23}$ are $n^4$ distorted and that there are no higher degree distorted extension; here $\omega_{ij}$ is dual to $e_i \wedge e_j$ in the given basis of the Lie algebra. 
The fact that the group is not standard solvable and does not admit the Azencott-Wilson criterion follows from these facts. 

In the Reason column, we use the following abbreviations for our reasoning: 

\begin{itemize}
    \item SOL: $G$ admits the SOL obstruction.
    \item Hyp: $G$ is standard solvable with co-dimension $1$ exponential radical and does not admit the SOL obstruction. 
    
    \item not Hyp: a sufficient condition to check non-hyperbolicity is when the cone dimension is larger than $1$.
    
    \item $\rho_1$: The Dehn functions of $G$ and $\rho_1(G)$ are related by Cornulier's Theorem~\ref{th:Cornulier-thm} and Proposition~\ref{cor: logSBE implies log distortion in Dehn functions}.
    
    \item $\rho_1=\rho_0$: When $\rho_1=\rho_0$ then $G$ is quasiisometric to $\rho_1(G)$ and all Dehn functions equal. 
    
    \item A-W: $G$ admits Azencott-Wilson criterion, and so its Dehn function is either linear (if and only if $G$ is hyperbolic), or quadratic. 
    
    \item $n^k$ D.Ex.: The group admits a $n^k$-distorted central extension, and does not admit extensions of higher degree distortions.

    \item C-T: A group that is standard solvable and does not admit SOL or $2$-homological obstructions has at most cubic Dehn function. 

    \item K: A standard solvable group without SOL or $2$-homological obstruction can admit the Killing module criterion, by which its Dehn function is at most quadratic.

    \item GT: If $G=U\ltimes N$ is generalized tame, then the Dehn function of $G$ can be estimated from the Dehn function of its largest nilpotent quotien
\end{itemize}

\begin{table}[t]
    \begin{center}
    \begin{tabular}{lllccc}
    \toprule
         $G$ & $\rho_1(G)$ & structure of $\mathfrak g = \operatorname{Lie}(G)$ &  \scriptsize{$\operatorname{conedim}$} \\
         \midrule
         - & $A_2$ & $\mathbf R \rtimes \Delta(1)$ & 1  \\
         $G_{3,2}$ & $G_{3,3}$ & $\mathbf R^2 \rtimes \Delta(1^2)$ & 1\\
         - & $G_{3,3}$ & $\mathbf R^2 \rtimes \Delta(1,1)$ & 1  \\
         - & $G_{3,4}$ & $\mathbf R^2 \rtimes \Delta(1,-1)$ & 1 \\
         - & $G_{3,5}^\alpha$ & $\mathbf R^2 \rtimes \Delta(1,\alpha)$, $-1 < \alpha < 1$, $\alpha \neq 0$. & 1 \\
         $G_{4,2}^\alpha$  & $G_{4,5}^{1, \alpha'}$\textsuperscript{($\dagger$)}  & $\mathbf R^3 \rtimes \Delta(1^2,\alpha)$, $\alpha \neq 0$. & 1 \\
        - &  $G_{4,3}$ &  $\mathbf R^3 \rtimes \Delta(1,0^2)$ & 3 \\
        $G_{4,4}$& $G_{4,5}^{1,1}$ &  $\mathbf R^3 \rtimes \Delta(1^3)$ & 1   \\
        - & $G_{4,5}^{\alpha, \beta}$ &  $\mathbf R^3 \rtimes \Delta(1,\alpha, \beta)$,  & 1   \\
        & &  $-1\leqslant \alpha \leqslant \beta \leqslant 1$, $\alpha\beta \neq 0$. \\
        $G_{4,7}$ & $G_{4,9}^1$ & $\mathfrak{heis} \rtimes \Delta(1^2,2)$ & 1  \\
        - & $G_{4,8}$ & $\mathfrak{heis} \rtimes \Delta(1,-1,0)$ & 1  \\
         $G_{4,9}^0$ & $\mathbf{R} \times G_{3,3}$ & $\mathfrak{heis} \rtimes \Delta(1,0, 1)$.  & 2   \\
        - & $G_{4,9}^\beta$ &   $\mathfrak{heis} \rtimes \Delta(1,\beta, 1+\beta)$, $-1 < \beta \leqslant 1$, $\beta \neq 0$ & 1  \\
         \bottomrule
    \end{tabular}
    \end{center}
    
    \footnotesize{\textsuperscript{($\dagger$)} $\alpha'$ may differ from $\alpha$.}
  
    \caption{The completely solvable, indecomposable Lie groups of exponential growth and dimension $d$, $2 \leqslant d \leqslant 4$.}
    \label{tab:scr_expgrowth-dim4}
\end{table}

\begin{table}[H]
    \centering
    \begin{tabular}{llllc}
    \toprule
         $G$ & $ \rho_0(G) $ & $\rho_1(G)$ & structure of $\mathfrak g = \operatorname{Lie}(G)$ & $\operatorname{conedim}(G)$ \\
         \midrule
         - &  - & $G_{5,7}^{\alpha, \beta, \gamma}$ & $\mathbf R^4\rtimes \operatorname{diag}(1, \alpha, \beta, \gamma)$,  & 1 \\
         & & & $- 1\leqslant \gamma \leqslant \beta \leqslant \alpha \leqslant 1,  \alpha \beta \gamma \neq 0$. & \\
         - & - & $G_{5,8}^\gamma$ & $\mathbf R^4 \rtimes \Delta(0^2,1,\gamma) $, & 3 \\
         & & & $- 1\leqslant \gamma\leqslant 1,  \gamma \neq 0$. & \\
         - &$G_{5,9}^{\beta,\gamma}$ & $G_{5,7}^{1,\beta, \gamma}$ & $\mathbf R^4 \rtimes \Delta(1^2, \beta, \gamma)$, $\beta\gamma \neq 0$, $\beta \leqslant \gamma$. & 1 \\
         - & - & $G_{5,10}$ & $\mathbf R^4 \rtimes \Delta(0^3,1)$ & 4 \\
         - & $G_{5,11}^\gamma$ & $G_{5,7}^{1,1,\gamma}$ & $\mathbf R^4 \rtimes \Delta(1^3,\gamma)$, $\gamma \neq 0$. & 1\\
         - & $G_{5,12}$ & $G_{5,7}^{1,1,1}$ & $\mathbf R^4 \rtimes \Delta(1^4)$ & 1\\
         $G_{5,13}^{\alpha, 0, 1}$ & - & $\mathbf R^2 \times G_{3,5}^\alpha$ & $\mathbf R^4 \rtimes \Delta(1,\alpha, \pm i)$, $\alpha \neq 0$. & 3 \\
         $G_{5,13}^{\alpha, \beta, \tau}$ & - & $G_{5,7}^{ \alpha, \beta, \beta}$ & $\mathbf R^4 \rtimes \Delta(1,\alpha, \beta \pm  i \tau)$,  & 1 \\
         & & & $-1\leqslant \alpha \leqslant 1, \,\alpha \beta \tau \neq 0$. & \\
         $G_{5,14}^\alpha$ & - & $G_{5,8}^1$ & $\mathbf R^4 \rtimes \Delta(0^2,\alpha \pm i)$, $\alpha \neq 0$. & 3 \\
         - & $G_{5,15}^0$ & $G_{5,8}^1$ & $\mathbf R^4 \rtimes \Delta(0^2,1^2)$   & 3 \\
         - & $G_{5,15}^\beta$ & $G_{5,7}^{1,\beta, \beta}$ & $\mathbf R^4 \rtimes \Delta(1^2,\beta^2)$, $\beta \neq 0$.& 1 \\
         $G_{5,16}^{0, \tau}$ & - & $\mathbf R^2 \times G_{3,3}$ & $\mathbf R^4 \rtimes \Delta(\pm i\tau,1^2)$, $\tau \neq 0$. & 3 \\
         $G_{5,16}^{\beta, 1}$ & - & $G_{5,7}^{1,\beta, \beta}$ & $\mathbf R^4 \rtimes \Delta(1^2,\beta \pm i)$, $\beta \neq 0$. & 1 \\
         $G_{5,17}^{\tau, 0,1}$ & - & $\mathbf R^2 \times G_{3,3}$ & $\mathbf R^4 \rtimes \Delta(\pm i, 1 \pm i \tau)$, $\tau \neq 0$. &  3 \\
         $G_{5,17}^{\tau, \alpha, \beta}$ & - & $G_{5,7}^{1, \beta/\alpha, \beta/\alpha}$ & $\mathbf R^4 \rtimes (\alpha \pm i, \beta \pm i\tau) $ & 1\\
         $G_{5,18}^{\alpha}$ & $ G_{5,11}^{1}$ & $G_{5,7}^{1,1,1}$  & $\mathbf R^4 \rtimes \Delta((\alpha \pm i)^2)$, $\alpha \neq 0$ & 1 \\
         - & - & $G_{5,19}^{0, \beta}$ & $(\mathfrak {heis} \times \mathbf R) \rtimes \Delta(1,-1,0,\beta)$, $\beta \neq 0$ & 2 \\
          - & $G_{5,19}^{1, \beta}$ & $\mathbf R \times G_{4,5}^{1,\beta}$ & $(\mathfrak {heis} \times \mathbf R) \rtimes \Delta(1,0,1,\beta)$, $\beta \neq 0$ & 2 \\
         - & - & $G_{5,19}^{\alpha, \beta}$ & $(\mathfrak {heis} \times \mathbf R) \rtimes \Delta(1,\alpha-1,\alpha,\beta)$, & 1 \\
         & & &  $(\alpha -1) \beta \neq 0$. & \\
         - & $G_{5,20}^0$ & $\mathbf R \times G_{4,8}$ & $(\mathfrak {heis} \times \mathbf R) \rtimes \Delta(1,-1,0^2) $ & 2 \\
         - & $G_{5,20}^1$ & $G_{5,19}^{1,1}$ & $(\mathfrak {heis} \times \mathbf R) \rtimes \Delta(1,0,1^2) $ & 2 \\
         - & $G_{5,20}^\alpha$ & $G_{5,19}^{\alpha, \alpha}$ & $(\mathfrak {heis} \times \mathbf R) \rtimes \Delta(1,\alpha-1,\alpha^2) $, $\alpha \neq 1$. & 1 \\
         
         \bottomrule
         
    \end{tabular}
    \caption{The simply connected, real, indecomposable, solvable Lie groups of exponential growth and dimension five (to be continued on Table~\ref{scr_expgrowth_ctd}).}
    \label{tab:scr_expgrowth}
\end{table}

\begin{table}[H]
    \centering
    \begin{tabular}{llllc}
    \toprule
     $G$ & $ \rho_0(G) $ & $\rho_1(G)$ & structure of $\mathfrak{g}$ & $\operatorname{conedim}(G)$ \\
     \midrule
     - & $G_{5,21}$ & $G_{5,19}^{2, 1}$ & $(\mathbf R \times \mathfrak {heis}) \rtimes \Delta(1^3,2)$ & 1 \\
         - & - & $G_{5,22}$ & $(\mathbf R \times \mathfrak {heis}) \rtimes \Delta(1,0^2,0)$ & 4  \\
     - &$G_{5,23}^{\beta}$ & $G_{5,19}^{2,\beta}$ &  $(\mathfrak {heis} \times \mathbf R) \rtimes \Delta(1^2,2,\beta)$, $\beta \neq 0$ &   1 \\
         - & $G_{5,24}^\epsilon$ & $G_{5,19}^{2,2}$ & $(\mathfrak{heis} \times \mathbf R) \rtimes \phi_{5,24}^\epsilon$, see Example~\ref{exm:G524}. & 1  \\
         $G_{5,25}^{1, 0}$& - & $\mathbf {Heis} \times A_2$ & $(\mathfrak{heis} \times \mathbf R) \rtimes \Delta(\pm i, 0, 1) $ & 4 \\
         $G_{5,25}^{\beta, \alpha}$& - & $G_{5,19}^{2,\beta/\alpha}$ & $(\mathfrak{heis} \times \mathbf R) \rtimes \Delta( \alpha \pm i, 2\alpha, \beta), \alpha \neq 0 $ & 1 \\
         
     - &
         $S_{5,26}^1$ & $G_{5,19}^{2,2}$ & $(\mathfrak{heis} \times \mathbf R) \rtimes \Delta(1,1,2^2)$ & 1 \\
     $G_{5,26}^{\alpha, \epsilon}$ &
         $S_{5,26}^1$ & $G_{5,19}^{2,2}$ & $(\mathfrak{heis} \times \mathbf R) \rtimes \phi_{5,26}^{\alpha,\epsilon}$, & 1 \\
         & & &  $\alpha > 0$, $\epsilon = \pm 1$; see Example~\ref{exm:G526}. & \\
         - & $G_{5,27}$ & $\mathbf R \times G_{4,5}^{1,1} $ & $(\mathfrak{heis} \times \mathbf R) \rtimes \phi_{5,27}$; see Example~\ref{exm:G527} & 2 \\
         - & $G_{5,28}^1$ & $\mathbf R \times G_{4,5}^{1,1} $ & $(\mathbf R \times \mathfrak{heis}) \rtimes \Delta(1^2, 0, 1)$  & 2 \\
         - & $G_{5,28}^\alpha$ & $G_{5,19}^{\alpha,1}$ & $(\mathbf R \times \mathfrak{heis}) \rtimes \Delta(1^2, \alpha -1, \alpha)$, $\alpha >1$  & 1 \\
         - & $G_{5,29}$ & $G_{5,8}^1$ & $(\mathbf R \times \mathfrak{heis}) \rtimes \Delta(0^2, 1,1)$  & 3 \\
         - & - & $G_{5,30}^{-1}$ & $\mathfrak{fil} \rtimes \Delta(1,-2 , -1, 0)$ & 1 \\
         - & - & $G_{5,30}^0$ & $\mathfrak{fil} \rtimes \Delta(1,-1 ,0, 1)$ & 1 \\
         - & $G_{5,30}^1$ & $\mathbf R \times G_{4,9}^1$  & $\mathfrak{fil} \rtimes \Delta(1,0 ,1, 2) $ & 2  \\
         - & - & $G_{5,30}^\alpha$ & $\mathfrak{fil} \rtimes \Delta(1,\alpha -1 , \alpha, \alpha +1)$ & 1 \\
         - & $G_{5,31}$ & $G_{5,30}^2$ & $\mathfrak{fil} \rtimes \Delta(1^2,2, 3)$ & 1 \\
         - & $G_{5,32}^0$ & $\mathbf R \times G_{4,5}^{1,1}$  & $\mathfrak{fil} \rtimes \Delta(0,1,1,1)$ & 2 \\
         - & $G_{5,32}^\alpha$ & $\mathbf R \times G_{4,5}^{1,1}$ & $\mathfrak{fil} \rtimes \phi_{5,32}^\alpha$; see Example~\ref{exm:G532} & 2 \\
         - & - & $G_{5,33}^{0,\beta}$ & $\mathbf R^3 \rtimes \{ \Delta(0,1,0), \Delta(1,0,\beta) \}$, $\beta \neq 0$. & 2\\
         - & - & $G_{5,33}^{\alpha, \beta}$ & $\mathbf R^3 \rtimes \{ \Delta(0,1,\alpha), \Delta(1,0,\beta) \}$, $\alpha \leqslant \beta$, $\alpha \neq 0$ & 2\\
        
         - & - & $G_{5,34}^\alpha$ & $\mathbf R^3 \rtimes \{ \Delta(\alpha,1,1), \Delta(1,0,1) \}$ & 2 \\
         $G_{5,35}^{\alpha, \beta}$ & - & $G_{5,33}^{\alpha, \beta}$ & $\mathbf R^3 \rtimes \{ \Delta(\alpha,\pm i), \Delta(\beta,1,1) \}$, $\alpha \neq 0$. & 2\\
         $G_{5,35}^{0, \beta}$ & - & $\mathbf R \times G_{4,5}^{1,\beta} $ & $\mathbf R^3 \rtimes \{ \Delta(0,\pm i), \Delta(\beta,1,1) \}$, $\beta \neq 0$ & 2 \\
         - & - & $G_{5,36}$\textsuperscript{($\ddagger$)} & $\mathfrak{heis} \rtimes \{ \Delta(1,0,1), \Delta(-1,1,0) \}$ & 2 \\
         $G_{5,37}$ & & $\mathbf R \times G_{4,9}^1$ & $\mathfrak{heis} \rtimes \{\Delta(1,1,2), \Delta(\pm i, 0)\}$ & 2 \\ 
         & & $G_{5,38}$ & $\mathbf R^2 \rtimes \mathfrak{heis}$; see Example~\ref{exm:G53839} & 3 \\
         $G_{5,39}$ & - & $G_{5,8}^1$ &  see Example~\ref{exm:G53839} & 3 \\
         \bottomrule
    \end{tabular}
    
    \begin{footnotesize}
    \textsuperscript{($\ddagger$)} $G_{5,36}$ is the maximal completely solvable subgroup of the rank two simple group $\operatorname{SL}(3,\mathbf R)$. 
    \end{footnotesize}
    \caption{The simply connected, real, indecomposable, solvable Lie groups of exponential growth and dimension five (started on Table \ref{tab:scr_expgrowth}).}
    \label{scr_expgrowth_ctd}
\end{table}

\begin{table}[H]
    \centering
    
    \begin{tabular}{lllllllll}
        \toprule
        Group & $\rho_1$ & S.S. & SOL & HOM & AW & D. Ex. & $\delta(n)$ & Reason \\
        \midrule
         $G_{3,2}$ & $G_{3,3}$ & $\checkmark$ & $\times$ &  &  &  & $n$ & Hyp \\
        
        $G_{3,3}$ & $G_{3,3}$ &  & &  &  &  & $n$ & $\rho_1$ \\
        
        $G_{3,4}$ & $G_{3,4}$ & $\checkmark$ & $\checkmark$ & & & & $\exp(n)$ & SOL \\
        
        $G_{3,5}^{\alpha>0}$ & $G_{3,5}^\alpha$ & $\checkmark$ & & &  & & $n$ & Hyp \\
        
        $G_{3,5}^{\alpha<0}$ & $G_{3,5}^\alpha$ & $\checkmark$ & $\checkmark$ & &  & & $\exp(n)$ & SOL \\ 
        
        $G_{4,2}^{\alpha>0}$ & $G_{4,5}^{1,\alpha}$ & $\checkmark$ & $\times$ & &  &  & $n$ & Hyp  \\
        
        $G_{4,2}^{\alpha<0}$ & $G_{4,5}^{1,\alpha}$ & $\checkmark$ & $\checkmark$ & &  & & $\exp(n)$ & SOL\\
        
        $G_{4,3}$ & $G_{4,3}$ & $\times$ & $\times$ & $\times$ & $\times$ & $\{\omega_{2,3},\omega_{2,4}\}$ & $n^3$ & $n^3$ D.Ex., GT \\
        
        $G_{4,4}$ & $G_{4,5}^{1,1}$ & $\checkmark$ & $\times$ & & &  & $n$ & Hyp \\
        
        $G_{4,5}^{0<\alpha\leq \beta}$& $G_{4,5}^{\alpha,\beta}$ & $\checkmark$ & $\times$ & & &  & $n$ & Hyp \\

        $G_{4,5}^{\alpha<0,\beta\ne 0}$& $G_{4,5}^{\alpha,\beta}$ & $\checkmark$ & $\checkmark$ & & &  & $\exp(n)$ & SOL \\
        
        $G_{4,7}$ & $G_{4,9}^1$ & $\checkmark$ & $\times$ &  &  &  & $n$ & Hyp \\
        
        $G_{4,8}$ & $G_{4,8}$ & $\checkmark$ & $\checkmark$ &  &  & & $\exp(n)$ & SOL \\
        
        $G_{4,9}^0$ & $\mathbf R\times G_{3,3}$ & $\checkmark$ &  &  & $\times$  &  & $n^2$ & $\rho_1$, K \\
        
        $G_{4,9}^{0<\beta\leq 1}$& $G_{4,9}^\beta$ & $\checkmark$ & $\times$ &  & & & $n$ & HYP \\
        
        $G_{4,9}^{-1\leq\beta<0}$ & $G_{4,9}^\beta$ & $\checkmark$ & $\checkmark$ & & &  & $\exp(n)$ & SOL \\
           \bottomrule
          
    \end{tabular}
     \caption{Indecomposable completely solvable groups of exponential growth and dimension less than $5$ and their Dehn functions.}
     \label{tab:groupsless5prop}
\end{table}

\begin{table}[htbp]

\begin{small}
    \centering
    \begin{tabular}{|lll|lll|}
    \hline
    Group & $\delta(n)$ & Reason & Group & $\delta(n)$ & Reason \\
    \hline
    $G_{5,7}^{\alpha\geq\beta\geq\gamma>0}$ & $n$ & Hyp & $G_{5,23}^{\beta>0}$ & $n$ & Hyp \\
    $G_{5,7}^{0\ne \alpha,0\ne \beta,\gamma<0}$ & $\exp(n)$ & SOL & $G_{5,23}^{\beta<0}$ & $\exp(n)$ & SOL \\
    $G_{5,8}^{-1\leq\gamma<0}$ & $\exp(n)$ & SOL & $G_{5,24}^{\epsilon}$ & $n$ & $\rho_1$ \\
    $G_{5,8}^{0<\gamma\leq 1}$ & $n^3$ & $n^3$ D.Ex. $\{\omega_{51}\}$, GT & $G_{5,25}^{1,0}$ & $n^3$ & $\rho_1=\rho_0$ \\
    $G_{5,9}^{0<\beta\leq \gamma}$ & $n$ & Hyp & $G_{5,25}^{\beta,\alpha,\beta\alpha<0}$ & $\exp(n)$ & $\rho_1$ \\
    $G_{5,9}^{0>\beta,\gamma\ne 0}$ & $\exp(n)$ & SOL & $G_{5,25}^{\beta,\alpha,\beta\alpha>0}$ & $n$ & $\rho_1=\rho_0$ \\
    $G_{5,10}$ & $n^4$ & $n^4$ D.Ex. $\{\omega_{15},\omega_{23}\}$, GT & $S_{5,26}^1$ & $n$ & $\rho_1$ \\
    $G_{5,11}^{\gamma>0}$ & $n$ & Hyp & $S_{5,26}^{\alpha,\epsilon}$ & $n$ & $\rho_1$ \\
    $G_{5,11}^{\gamma<0}$ & $\exp(n)$ & SOL & $G_{5,27}$ & $n^2$ & $\rho_1$, K \\
    $G_{5,12}$ & $n$ & $\rho_1$ & $G_{5,28}^1$ & $n^2$ & $\rho_1$, K \\
    $G_{5,13}^{1>\alpha,0,1}$ & $\exp(n)$ & $\rho_1$ & $G_{5,28}^{\alpha>1}$ & $n$ & $\rho_1$ \\
    $G_{5,13}^{1<\alpha,0,1}$ & $n^2$ & $\rho_1=\rho_0$ & $G_{5,28}^{\alpha<1}$ & $\exp(n)$ & $\rho_1$ \\
    $G_{5,13}^{\alpha\leq \beta,\tau:\alpha<0}$ & $\exp(n)$ & $\rho_1$ & $G_{5,29}$ & $n^3$ & $n^3$ D.Ex. $\{\omega_{12},\omega_{15}\}$, GT \\
    $G_{5,13}^{0<\alpha\leq\beta,\tau}$ & $n$ & $\rho_1$ & $G_{5,30}^{-1}$ & $\exp(n)$ & SOL \\
    $G_{5,14}^{\alpha\ne 0}$ & $n^3$ & $\rho_1=\rho_0$ & $G_{5,30}^{0}$ & $\exp(n)$ & SOL \\
    $G_{5,15}^0$ & $n^3$ & $\rho_1=\rho_0$ & $G_{5,30}^{1}$ & $n^2$ & $\rho_1$, K \\
    $G_{5,15}^{\beta<0}$ & $\exp(n)$ & $\rho_1$ & $G_{5,30}^{1>\alpha\notin\{-1,0\}}$ & $\exp(n)$ & SOL \\
    $G_{5,15}^{\beta>0}$ & $n$ & $\rho_1$ & $G_{5,30}^{1<\alpha}$ & $n$ & Hyp \\
    $G_{5,16}^{0,\tau\ne 0}$ & $n^2$ & $\rho_1=\rho_0$ & $G_{5,31}$ & $n$ & $\rho_1$ \\
    $G_{5,16}^{0>\beta,1}$ & $\exp(n)$ & $\rho_1$ & $G_{5,32}^\alpha$ & $n^2$ & $\rho_1$, K \\
    $G_{5,16}^{0<\beta,1}$ & $n$ & $\rho_1$ & $G_{5,33}^{0,\beta<0}$ & $\exp(n)$ & SOL \\
    $G_{5,17}^{0\ne\tau,0,1}$ & $n^2$ & $\rho_1=\rho_0$ & $G_{5,33}^{0,\beta>0}$ & $n^2$ & A-W, not Hyp \\
    $G_{5,17}^{0\ne\tau,\alpha,\beta:\alpha\beta>0}$ & $n$ & $\rho_1$ & $G_{5,33}^{\alpha < \beta=0}$ & $\exp(n)$ & SOL \\
    $G_{5,17}^{0\ne\tau,\alpha,\beta:\alpha\beta<0}$ & $\exp(n)$ & $\rho_1$ & $G_{5,33}^{0<\alpha,\beta=0}$ & $n^2$ & C-T, not Hyp, K \\
    $G_{5,18}^{\alpha\ne 0}$ & $n$ & $\rho_1$ & $G_{5,33}^{0<\alpha\leq \beta}$ & $n^2$ & A-W, not Hyp \\
    $G_{5,19}^{0,\beta\ne 0}$ & $\exp(n)$ & SOL & $G_{5,33}^{\alpha\leq \beta<0}$ & $n^2$ & C-T, not Hyp, K \\
    $G_{5,19}^{1,\beta<0}$ & $\exp(n)$ & SOL & $G_{5,34}^\alpha$ & $n^2$ & A-W, not Hyp \\
    $G_{5,19}^{1,\beta>0}$ & $n^2$ & $\rho_1$, K & $G_{5,35}^{0,\beta<0}$ & $\exp(n)$ & $\rho_1=\rho_0$ \\
    $G_{5,19}^{\alpha,\beta:(\alpha-1)\beta<0}$ & $\exp(n)$ & SOL & $G_{5,35}^{0,\beta>0}$ & $n^2$ & $\rho_1=\rho_0$ \\
    $G_{5,19}^{\alpha,\beta:(\alpha-1)\beta>0}$ & $n$ & Hyp & $G_{5,35}^{\alpha < \beta=0}$ & $\exp(n)$ & $\rho_1=\rho_0$ \\
    $G_{5,20}^0$ & $\exp(n)$ & SOL & $G_{5,35}^{0<\alpha,\beta=0}$ & $n^2$ & $\rho_1=\rho_0$ \\
    $G_{5,20}^1$ & $n^2$ & $\rho_1$, K & $G_{5,35}^{0<\alpha\leq \beta}$ & $n^2$ & $\rho_1=\rho_0$ \\
    $G_{5,20}^{\alpha:(\alpha-1)\alpha<0}$ & $\exp(n)$ & $\rho_1$ & $G_{5,35}^{\alpha\leq \beta<0}$ & $n^2$ & $\rho_1=\rho_0$ \\
    $G_{5,20}^{\alpha:(\alpha-1)\alpha>0}$ & $n$ & $\rho_1$ & $G_{5,36}$ & $n^2$ & A-W, not Hyp \\
    $G_{5,21}$ & $n$ & $\rho_1$ & $G_{5,37}$ & $n^2$ & $\rho_1=\rho_0$ \\
    $G_{5,22}$ & $n^4$ & $n^4$ D.Ex. $\{\omega_{25},\omega_{34}\}$, GT & $G_{5,38}$ & $n^3$ & $n^3$ D. Ex. $\{ \omega_{35}; \omega_{34} \}$, GT \\
    & & & $G_{5,39}$ & $n^3$ & $\rho_1 = \rho_0$ \\
    \bottomrule
    \end{tabular}
    \end{small}
    \caption{Dehn functions of $5$-dimensional simply connected indecomposable solvable Lie groups.}   \label{tab:Dehn-functions-5}
\end{table}

\subsection{Computations}\label{sec: computation examples}
In this section we explain how to compute the data and criteria described above. We do not give the complete computations, but rather provide full details on a few selected examples in each category. 

\subsubsection{Computing $\rho_1(G)$ for simply connected solvable $G$} Beware that we do not take the same bases as in \cite{Mubarakzyanov,PateraZassenhaus} when working in the Lie algebras: often, the order and the sign of the vectors is changed according to our needs.

\begin{example}\label{exm:G43}
    The Lie algebra of the group $G_{4,3}$ is $\mathbf R^3 \rtimes \Delta(1,0^2)$. Let $(e_1,\ldots e_4)$ be its basis. 
    The derived subalgebra is $C^2 \mathfrak g = \langle e_1, e_2 \rangle$, and the exponential radical is $C^3 \mathfrak g = \langle e_1 \rangle$. 
    The quotient $\mathfrak g_{4,3} / \operatorname{R}_{\exp} \mathfrak g_{4,3}$ is isomorphic to $\mathfrak{heis}$, the exponential radical is split, and the action on it is $\mathbf R$-diagonalizable. So $G_{4,3}$ is in $(\mathcal C_1)$.
\end{example}

\begin{example}
    The Lie algebra of the group $G_{4,7}$ is $\mathfrak{heis} \rtimes \Delta(1^2,2)$, which means that it has a basis $(e_1,e_2,e_3,e_4)$ where $e_1, e_2$ and $e_3$ generate a Heisenberg ideal, with $[e_1,e_2] = e_3$, and that
   \[ \operatorname{ad}_{e_4} = \begin{pmatrix}
        1 & 1 & 0 \\
        0 & 1 & 0 \\
        0 & 0 & 2
    \end{pmatrix} \]
    in the basis $(e_1,e_2,e_3)$. We have $C^3 G_{4,7} = C^2 G_{4,7}$, so the exponential radical is the nilradical.
    To compute $\rho_1(G_{4,7})$ we remove the nilpotent part in the Jordan decomposition of $\operatorname{ad}_{e_4}$, this gives a new Lie algebra law which is that of $\mathfrak{heis} \rtimes \Delta(1,1,2)$. We find that it is $G_{4,9}^1$.
    ($G_{4,7}$ is QI rigid within $(\mathcal C_0)$ by the combination of \cite{CoTesContracting} and \cite{CarrascoSequeira}).
\end{example}

\begin{example}\label{exm:G524}
    The Lie algebra of the group $G_{5,24}^\epsilon$ has a nilradical $\mathfrak n = \mathfrak{heis} \times \mathbf R$, with basis $(e_1,e_2,e_3,e_4)$ such that $[e_1,e_2] = e_3$, and
    \[ \operatorname{ad}_{e_5} = \phi_{5,24}^\epsilon := \begin{pmatrix}
        1 & 1 & 0 & 0 \\
        0 & 1 & 0 & 0 \\
        0 & 0 & 2 & \epsilon \\
        0 & 0 & 0 & 2
    \end{pmatrix}, \quad \epsilon = \pm 1. \]
    (Beware the basis in \cite{Mubarakzyanov} would rather be $(e_2,e_1,-e_3,e_4,-e_5)$). Since $\operatorname{ad}_{e_5}$ is nondegenerate on $\mathfrak n$, the exponential radical is again equal to the nilradical.
    If $\epsilon = 1$ then $\phi_{5,24}^\epsilon$ has Jordan normal form in the given basis, and its diagonal part is that of type $\Delta(1,1,2,2)$. If $\epsilon =-1$, then $\phi_{5,24}^\epsilon$ has its standard Jordan form in the basis $(f_1,f_2,f_3,f_4):=(e_1,e_2,-e_3,e_4)$ and the diagonal part the same. (Note that $[f_1,f_2] = -f_3$, which is why we could not express the structure of $\mathfrak g_{5,24}^\epsilon$ in the table.) In both cases, $\rho_1(\mathfrak g_{5,24}^{\epsilon}) = (\mathfrak {heis} \times \mathbf R) \rtimes \Delta(1,1,2,2) = \mathfrak g_{5,19}^{2,2}$.
\end{example}

\begin{example}\label{exm:G526}

The Lie algebra of the group $G_{5,26}^{\alpha,\epsilon}$ has a basis $(e_1,\ldots, e_5)$ with $[e_1,e_2] = e_3$, $[e_5,e_1] = \alpha e_1 +e_2$, $[e_5,e_2] = \alpha e_2 - e_1$, $[e_5,e_3] = 2\alpha e_3$ and $[e_5,e_4]= 2\alpha e_4 +\epsilon e_3$. Without loss of generality the parameter $\alpha$ is positive\footnote{We might as well define the group for $\alpha < 0$. However, exchanging $e_1$ and $e_2$ while turning $e_3$ and $e_4$ into their opposite we see that $G_{5,26}^{-\alpha, \epsilon} \simeq G_{5,26}^{\alpha, -\epsilon}$; there are no further isomorphism in this family thanks to the invariant given in \cite{PateraZassenhaus}, where $\alpha$ is denoted by $p$.}; set
    \[ (f_1, f_2,f_3,f_4,f_5) = 
     \left( \frac{\epsilon}{\sqrt{\alpha}} e_1, \frac{1}{\sqrt{\alpha}}e_2, \frac{\epsilon}{\alpha} e_3, e_4, \frac{1}{\alpha} e_5 \right). \]
    Then, with $\tau = \alpha^{-1/2}$,
     \[ \left[ \operatorname{ad}_{f_5} \right]_{(f_1,f_2,f_3,f_4)} = \phi_{5,26}^{\alpha, \epsilon} =\begin{pmatrix}
        1 & - \tau & 0 & 0 \\
        \tau & 1 & 0 & 0 \\
        0 & 0 & 2 & \epsilon \\
        0 & 0 & 0 & 2
    \end{pmatrix}, \quad \epsilon = \pm 1. \]
    The derivation $\phi_{5,26}^{\alpha, \epsilon}$ has Jordan type $\Delta(1+i\tau, 2^2)$, so $\rho_0(G_{5,26}^{\alpha, \epsilon})$ is $S_{5,26}^1$.
\end{example}

\begin{example}\label{exm:G527}

    The Lie algebra of $G_{5,27}$ has nonzero brackets $[e_1,e_2] = e_3$, $[e_5,e_3] = e_3$, $[e_5,e_2] = e_2 + e_4$, $[e_5,e_4] = e_3+e_4$. Thus $C^2 G_{5,27} = C^3 G_{5,27} = \langle e_2, e_3, e_4 \rangle$ and this is the exponential radical. The cone dimension is therefore $2$. Moreover, the exponential radical splits, and in the basis $(e_3,e_4,e_2)$, $\operatorname{ad}_{e_5}$ has type $\Delta(1^3)$ while $\operatorname{ad}_{e_1}$ is nilpotent. From this we deduce that $\rho_1(G_{5,27})$ is $\mathbf R \times G_{4,5}^{1,1}$.
\end{example}

\begin{example}\label{exm:G532}
    The Lie algebra $\mathfrak g_{5,32}^\alpha$ has a basis $(e_1, \ldots, e_5)$ where $[e_1,e_2] = e_3, [e_1, e_3] = e_4$ and the matrix of $\operatorname{ad}_{e_5}$ in this basis is
    \[
    [\operatorname{ad}_{e_5}] = \phi_{5,32}^\alpha =
\begin{pmatrix}
   0 & 0 & 0 & 0 \\
   0 & 1 & 0 & 0 \\
   0 & 0 & 1 & 0 \\
   0 & \alpha & 0 & 1
\end{pmatrix}
    \]
   The derived subgroup is $\langle e_2, \ldots,  e_4\rangle$ and this is the exponential radical. $e_1$ acts nilpotently on the exponential radical. Hence the Lie algebra of $\rho_1(G_{5,32}^\alpha)$ is, in the same basis, $\langle e_2, e_3, e_4 \rangle \rtimes \langle e_1, e_5 \rangle$, where $\operatorname{ad}_{e_1} = 0$ and $\operatorname{ad}_{e_5} = 1$. This further splits as $(\langle e_2, e_3, e_4 \rangle \rtimes \langle e_5 \rangle) \times \langle e_1 \rangle$, with the isomorphism type of $G_{4,5}^{1,1} \times \mathbf R$.
\end{example}

\begin{example}[$G_{5,38}$ and $G_{5,39}$]\label{exm:G53839}
The Lie algebra $\mathfrak g_{5,38}$ has a two-dimensional abelian ideal $\mathfrak r$, generated by the basis element $(e_1,e_2)$, and splits as a semidirect product $\mathfrak r \rtimes \mathfrak{heis}$, where a section of $\mathfrak{heis}$ is generated by $(e_3,e_4,e_5)$, where $[e_5,e_4] = e_3$, 
\[ \operatorname{ad}_{e_4} = \begin{pmatrix}
    1 & 0 \\ 0 & 0 
\end{pmatrix} \quad \text{and} \quad \operatorname{ad}_{e_5} = \begin{pmatrix}
    0 & 0 \\ 0 & 1
\end{pmatrix} \]
in the basis $(e_1,e_2)$ of $\mathfrak r$.  
The nilradical is $\mathfrak n = \mathfrak r + \mathbf Re_5$, it is not split.
We compute that $C^2 \mathfrak g_{5,38} = \mathfrak n$ and $C^3 \mathfrak g_{5,38} = C^4 \mathfrak g_{5,38} = \mathfrak r$, so that $\mathfrak r$ is the exponential radical. The action of $\operatorname{span}(e_3,e_4,e_5)$ being diagonal, $\mathfrak g_{5,38}$ is in $(\mathcal C_1)$.

Let us now turn to $\mathfrak g_{5,39}$
The structure of the Lie algebra $\mathfrak g_{5,39}$ is also $\mathfrak r \rtimes {\mathfrak{heis}}$ but this time the adjoint action of $e_4$ and $e_5$ on $\mathfrak r$  are, instead,
\[ \operatorname{ad}_{e_4} = \begin{pmatrix}
    1 & 0 \\ 0 & 1 
\end{pmatrix} \quad \text{and} \quad \operatorname{ad}_{e_5} = \begin{pmatrix}
    0 & 1 \\  -1 & 0
\end{pmatrix} \]
in the basis $(e_1,e_2)$ of $\mathfrak r$. Again, the nilradical $\mathfrak n = \mathfrak r + \mathbf Re_3$ is not split, and $\mathfrak r$ is the exponential radical. However the action of $e_5$ on the nilradical has a purely imaginary type, so that, $\rho_0(\mathfrak g_{5,39}) = \mathfrak r \rtimes \mathfrak{heis}$ with $e_5$ centralizing $\mathfrak r$. Further, $\rho_0(\mathfrak g_{5,39})$ admits the following description: $\operatorname{span}(e_1,e_2,e_3,e_5)$ is an abelian ideal and, in the basis $(-e_3,e_5,e_1,e_2)$, $\operatorname{ad}_{e_{4}}$ has matrix of type $\Delta(0^2,1,1)$. Consequently, $\rho_0(\mathfrak g_{5,39})=\mathfrak g_{5,8}^{1}$.
\end{example}

\subsubsection{Standard Solvability.} \label{sec: standard solvability}
We need to check whether $G$ splits as $G=U\rtimes A$, where $U$ is the exponential radical of $G$, $A$ is abelian and the action of $A$ on $U/[U,U]$ has no fixed points (Proposition~\ref{prop: UA standard solvable can take u to be ExpRad}). When $A$ is $1$-dimensional this becomes a very easy task. 
In the general case, in the presence of an abelian complement $A$ to $U$ in $G$, it is quite straight forward to check the eigenvalues of the action of $A$ on $U/[U,U]$. When $G/U$ is non-abelian, this rules out the possibility of $U$ having an abelian complement in $G$.

\begin{example}\label{exm: non Standard Solvable}
    Let us check that $G_{5,8}^\gamma$ is not standard solvable. The nonzero Lie brackets are:
    $$[e_5,e_2]=e_1, [e_5,e_3]=e_3, [e_5,e_4]=\gamma e_4;$$

    The exponential radical $\mathfrak{u}$ is generated by $\{e_3,e_4\}$ and is isomorphic to $\mathbf{R}^2$. Any complement of this must be isomorphic to the quotient of $\mathfrak{g}$ by $\mathfrak{u}$, which is the Heisenberg algebra. In particular there is no abelian complement, and $G_{5,8}^\gamma$ is not standard solvable.
\end{example}

\subsubsection{Azencott-Wilson criterion} The Azencott-Wilson criterion from \cite{azencott1976homogeneous} states that a completely solvable Lie group $G$ admits a left-invariant nonpositively curved Riemannian metric if the following conditions on its Lie algebra $\mathfrak g$ are met: 
\begin{enumerate}
    \item $\mathfrak{g}=\mathfrak{n}\oplus\mathfrak{a}$, where $\mathfrak{n}$ is its nilradical and $\mathfrak{a}$ is abelian. 
    \item For every root $\alpha$ and $H\in\mathfrak{a}$ with $\alpha(H)=0$, $\operatorname{ad}_{H \mid \mathfrak n^\alpha}$ is semisimple. 
    \item The set of roots that are different from $0$ lie in an open half-space.
    \item The zero weight space is central in $\mathfrak{n}$.
    \item For each root $\alpha$, let
    $$\mathfrak{n}_\alpha^0=\{X\in \mathfrak{n}_\alpha\}\mid [X,\mathfrak{n}_\beta]=0\  \text{whenever $\gamma$ is a root linearly independent of $\alpha$}\}.$$
    Then for all $\alpha$, the space $\mathfrak{n}_\alpha^0$ is $\mathfrak{a}$-invariant and admits an $\mathfrak{a}$-invariant complement on which $\mathfrak{a}$ acts semisimply.
\end{enumerate}

\begin{example}
   We show that $G_{5,36}$ admits the Azencott-Wilson criterion. We remark that this group is the Borel subgroup of $\operatorname{SL_3(\mathbf{R})}$ so the fact that it acts on a nonpositively curved space is well known. We give this example in detail only to familiarize the reader with the Azencott-Wilson criterion.

   The Lie algebra $\mathfrak{g}_{5,36}$ is given by the brackets: 
   $$[e_1,e_2]=e_3, [e_4,e_1]=e_1, [e_4,e_3]=e_3,[e_5,e_1]=-e_1,[e_5,e_2]=e_2$$
\end{example}
The nilradical of $\mathfrak{g}_{5,36}$ is $\mathfrak{n}=\langle e_1,e_2,e_3\rangle$, isomorphic to $\mathfrak{heis}$. It has a natural abelian complement in $\mathfrak{a}:=\langle e_4,e_5\rangle$. The action of all elements of $\mathfrak{a}$ is semisimple, so condition $2$ is met. The set of roots of $\mathfrak{a}$ is $\{(1,-1),(0,1)(1,0)\}$, all lying in an open half-space of $\mathbf{R}^2$. The $0$-root space is trivial and in particular central in $\mathfrak{n}$. Condition $(5)$ is the most involved: For the root $\alpha=(1,-1)$, $\mathfrak{n}_\alpha=\langle e_1 \rangle$. Since $[e_1,e_2]\ne 0$, the space 
$\mathfrak{n}_\alpha^0=\{0\}$. Obviously this space is $\mathfrak{a}$-invariant, its complement in $\mathfrak{n}$ is the whole $\mathfrak{n}$ which is $\mathfrak{a}$-invariant. Finally, $\mathfrak{a}$ acts semisimply on $\mathfrak{n}$, so condition $(5)$ is met for the root $(-1,1)$. The same argument works for the root $(0,1)$, as $\mathfrak{n}_{(0,1)}^0=\{0\}$. For the root $(1,0)$ we have $\mathfrak{n}_{(1,0)}^0=\langle e_3\rangle$. But also this space is easily seen to satisfy the requirements of condition $(5)$. We conclude that $\mathfrak{g}_{5,36}$ admits the Azencott-Wilson criterion. 
\begin{example}
    We show that the group $G_{5,30}^1$ does not admit the Azencott-Wilson criterion. Notice that this group does have Dehn function $n^2$, and could possibly act on a nonpositively curved space.

    The Lie algebra is defined by the filiform brackets $[e_1,e_2]=e_3, [e_1,e_3]=e_4$, and non-zero action of $e_5$ given by $[e_5,e_1]=e_1, [e_5,e_3]=e_3, [e_5,e_4]=2e_4$. We see that the nilradical is spanned by $\{e_1,e_3,e_4\}$ and that the complement spanned by $\{e_2,e_5\}$ is abelian. However, $e_2$ acts on the nilradical with ordered basis $(e_3,e_1,e_4)$ via the following matrix: 
    \[
\begin{pmatrix}
0 & 1 & 0 \\
0 & 0 & 0 \\
0 & 0 & 0
\end{pmatrix}.
\]
Considering the root $\alpha$ such that $\alpha(e_5) = 1$ and $\alpha(e_2)=0$, we observe that $e_2$ does not act semisimply on the space $\mathfrak n^\alpha = \langle e_3, e_1 \rangle$. This violates condition (2) of the criterion. 
\end{example}

\subsubsection{{\rm SOL} obstruction} 
Propositions 4.C.3 and 4.9.D of~\cite{CoTesDehn} state that a group admits the SOL obstruction if and only if its exponential radical admits two quasi-opposite principal weights. By weight we  mean elements of $\operatorname{Hom}(G/U,\mathbf{R})$ with non-zero eigenspaces; weights are principal if they are weights of the action on $U/[U,U]$.

\begin{example}
    We show that $G_{5,23}^{\beta<0}$ admits the SOL obstruction. The defining Lie brackets are
    $$[e_1,e_2]=e_3,[e_5,e_1]=e_1,[e_5,e_2]=e_1+e_2,[e_5,e_3]=2e_3,[e_5,e_4]=\beta e_4$$
    The Lie algebra of the exponential radical is $\mathfrak{u}= \langle e_1,e_2,e_3,e_4\rangle$, a direct product of $\mathfrak{heis}$ and $\mathbf{R}$. Therefore $\mathfrak{u}/[\mathfrak{u},\mathfrak{u}]=\langle e_1,e_2,e_4\rangle$, which is $\mathbf{R}^3$. The action of $\mathfrak{a}=\langle e_5\rangle$ on this space is given by the roots $(1),(1),(\beta)$. Since $\beta<0$, we see that $0$ is in the convex hull of the principal roots, and conclude $G_{5,23}^{\beta<0}$ admits the SOL obstruction. 
\end{example}

\subsubsection{$2$-homological obstruction}
Let $\mathfrak{u}$ be the Lie algebra of the exponential radical $U$, and consider the action of $G/U$ on $U$. This action extends to an action on $H_2(\mathfrak{u})$, the second homology of $\mathfrak{u}$, defined by linearly extending the action on $2$-vectors given by $t.(v_1\wedge v_2)=t.v_1\wedge v_2+v_1\wedge t.v_2$. 
The $2$-homological obstruction states that if this action on $\operatorname{H}_2(\mathfrak{u})$ has a non-trivial zero eigenspace, then $G$ has exponential Dehn function. The computation of this condition is algorithmic. We give one example to manifest it.

\begin{example}
    We show the group $G_{5,29}$ does not have the $2$-homological obstruction. The defining brackets are: 
    $$[e_2,e_3]=e_4,[e_5,e_2]=e_1,[e_5,e_3]=e_3,[e_5,e_4]=e_4$$
    The Lie algebra of the exponential radical is $\mathfrak{u}:=\langle e_3,e_4\rangle$ which is isomorphic to $\mathbf{R}^2$. 
    The second homology group of $\mathbf{R}^2$ is one dimensional generated by $e_3\wedge e_4$. 
    Since it is $1$-dimensional, it is enough to find one element of $\mathfrak a$ which acts non-trivially on it. 
    Indeed, $e_5.(e_3\wedge e_4)=2e_3\wedge e_4$. 
    We conclude that $G_{5,29}$ does not admit the $2$-homological obstruction. 
\end{example}

We remark that in \cite[Section 1.5.3]{CoTesDehn}, Cornulier and Tessera give an example of a group that admits the $2$-homological obstruction but not the SOL obstruction. We note that in all groups we checked (i.e.\ up to dimension $5$) there is no such group.

\subsubsection{Generalized tame groups}
Assume $G=U\rtimes N$. An element $c\in N$ acts as a \emph{compaction} on $U$ if there is a compact set $\Omega\subset U$ such that  for every compact subset $K\subset U$ there is $n\geq 0$ such that $c^n(K)\subset \Omega$.

\begin{definition}[\cite{CoTesDehn}, Definition~6.E.1]
    A locally compact group $G$ is generalized tame if it has a semi-direct product decomposition $G=U\rtimes N$ where some element $c$ of $N$ acts on $U$ as a compaction, and $N$ is nilpotent and compactly generated. 
\end{definition}

\begin{example}
Consider $G_{5,8}^\gamma$ for $\gamma\in(0,1]$. The defining brackets are: 
$$[e_5,e_1]=0, [e_5,e_2]=e_1,[e_5,e_3]=e_3, [e_5,e_4]=\gamma e_4$$

The exponential radical is $\langle e_3,e_4\rangle$, and the quotient by it is $\langle e_1,e_2,e_5\rangle$, isomorphic to the Heisenberg group where $e_1$ is central. The element $\exp(e_5)$ acts as a compaction on the exponential radical. The Dehn function of the Heisenberg group is cubic, therefore $G$ has cubic Dehn function. 

\end{example}

\subsubsection{Central Extensions} See example in Section~\ref{sec: example of central extension}.

\subsubsection{Computing $\operatorname{Kill}(\operatorname{R}_{\exp} \mathfrak g)_0$}

When a non-hyperbolic standard solvable group $G$ does not have the SOL nor the 2-homological obstruction, \cite[Theorem F]{CoTesDehn} gives a sufficient condition for the Dehn function of $G$ to be quadratic. 
This condition is given by the vanishing of the zero weight submodule in the Killing module of $\operatorname{R}_{\exp} \mathfrak g$, that is, in the quotient of the symmetric square of $\operatorname{R}_{\exp} \mathfrak g$ by the submodule spanned by elements of the form $[x,y] \odot z - x \odot [y,z]$ for $x,y,z \in \operatorname{R}_{\exp} \mathfrak g$. When the exponential radical is abelian, this is not a proper quotient and we will still denote the elements of the Killing module by their representatives in the symmetric square.

\begin{example}
    Let us prove that $\operatorname{Kill}(\operatorname{R}_{\exp} \mathfrak g_{4,9}^{0})_0= 0$. 
    The nonzero Lie brackets in $\mathfrak g_{4,9}^{0}$ are $[e_1,e_2] = e_3$, $[e_4,e_1] = e_1$. The exponential radical is spanned by $e_1$ and $e_3$; it is abelian, so that $\operatorname{Kill}(\operatorname{R}_{\exp}\mathfrak g_{4,9}^0)$ is three dimensional, spanned by $e_1 \odot e_1$, $e_1\odot e_3=e_3 \odot e_1$, and $e_3\odot e_3$. Using $e_{i} \cdot (e_j \odot e_k)=[e_i,e_j]\odot e_k + e_j \odot [e_i, e_k]$ for $i\in \{2,4\}$ and $j,k \in \{1,3\}$, we obtain that 
    \begin{align*}
        & e_2 \cdot (e_1 \odot e_1) = - 2 e_1 \odot e_3 
        & e_4 \cdot (e_1 \odot e_1) = 2 e_1 \odot e_1 \\
        & e_2 \cdot (e_1 \odot e_3) = - e_3 \odot e_3 
        & e_4 \cdot (e_1 \odot e_3) = 2e_1 \odot e_3  \\
        & e_2 \cdot (e_3 \odot e_3) = 0 
        & e_4 \cdot (e_3 \odot e_3) = 2e_3 \odot e_3,
    \end{align*}
    so that $\operatorname{Kill}(\operatorname{R}_{\exp} \mathfrak g_{4,9}^{0})_0= 0$.
\end{example}

\begin{example}[An example with non-abelian exponential radical]
    Let us check that $\operatorname{Kill}(\operatorname{R}_{\exp} \mathfrak g_{5,30}^{1})_0= 0$. 
    The nonzero Lie brackets are $[e_1,e_2] = e_3$, $[e_1,e_3] = e_4$, $[e_5,e_1] = e_1$, $[e_5,e_3]=e_3$, $[e_5,e_4] = 2e_4$. The exponential radical is spanned by $e_1,e_3,e_4$; it is non abelian, and the Killing module is a quotient of its symmetric square by the submodule spanned by all the symmetric tensors involving $e_4$; indeed,
    \begin{align*}
        [e_1,e_3] \odot e_3 - e_1 \odot [e_3,e_4] & = e_4 \odot e_4; \\
        [e_1,e_3] \odot e_3 - e_1 \odot [e_3,e_3] & = e_3 \odot e_4; \\
        [e_1,e_3] \odot e_1 - e_1 \odot [e_3,e_1] & = e_1 \odot e_4 - (e_1 \odot - e_4) = 2 e_1 \odot e_4.
    \end{align*}
    Now 
    \begin{align*}
        & e_2 \cdot [e_1 \odot e_1] = - 2 [e_1 \odot e_3] 
        & e_5 \cdot [e_1 \odot e_1] = 2 [e_1 \odot e_1] \\
        & e_2 \cdot [e_1 \odot e_3] = - [e_1 \odot e_3]
        & e_5 \cdot [e_1 \odot e_3] = 2 [e_1 \odot e_3] \\
        & e_2 \cdot [e_3 \odot e_3] = 0
        & e_5 \cdot [e_3 \odot e_3] = 2 [e_3 \odot e_3],
    \end{align*}
    finishing the proof that $\operatorname{Kill}(\operatorname{R}_{\exp} \mathfrak g_{5,30}^{1})_0= 0$. The group $G_{5,30}^1$ is standard solvable, and sublinear bilipschitz equivalent to $G_{4,9}^1 \times \mathbf R$, so that its Dehn function was a priori between $n^2$ and $n^2 \log^4 n$ by Corollary~\ref{cor: logSBE implies log distortion in Dehn functions}; the computation of the zero weight subspace in the Killing module above raises this indetermination, and the Dehn function of $G_{5,30}^1$ is quadratic. 
\end{example}

\begin{example}[An example with parameters]
\label{exm:computing-killing-g-5-33}
    Let us compute $\operatorname{Kill}(\operatorname{R}_{\exp} \mathfrak g^{\alpha, \beta}_{5,33})_0$ depending on $\alpha$ and $\beta$.
    The nonzero Lie brackets are as follows: 
    \begin{equation*}
        [e_4,e_1] = 0,\,  [e_5,e_1] = e_1,\, 
        [e_4,e_2] = e_2,\,   [e_5,e_2] = 0,\,  
        [e_4,e_3] = \alpha e_3,\,  [e_5,e_3] = \beta e_3  .
    \end{equation*}
    We can assume that $(\alpha, \beta) \neq (0,0)$, otherwise $\langle e_3 \rangle$ becomes a direct factor, and $\alpha \leqslant \beta$ without loss of generality.
    Since $\mathfrak u = \operatorname{R}_{\exp} \mathfrak g^{\alpha, \beta}_{5,33} = \operatorname{span}(e_1,e_2,e_3)$ is abelian, $\operatorname{Kill}(\mathfrak u) = \mathfrak u \odot \mathfrak u$.
    We compute that
    \begin{align*}
        & e_4 \cdot (e_1 \odot e_1) = 0 
        & e_4 \cdot (e_2 \odot e_2) = 2 e_2 \odot e_2 \\
        & e_4 \cdot (e_1 \odot e_2) = e_1 \odot e_2
        & e_4 \cdot (e_2 \odot e_3) = (1+\alpha) e_2 \odot e_3 \\
        & e_4 \cdot (e_1 \odot e_3) = \alpha e_1 \odot e_3 
        & e_4 \cdot (e_3 \odot e_3) = 2\alpha e_3 \odot e_3 
    \end{align*}
    and similarly (by exchanging $e_1$ and $e_2$, $\alpha$ and $\beta$)
    \begin{align*}
        & e_5 \cdot (e_1 \odot e_1) = 2 e_1 \odot e_1 &  e_5 \cdot (e_2 \odot e_2) = 0  \\
        & e_5 \cdot (e_1 \odot e_2) = e_1 \odot e_2 &   e_5 \cdot (e_2 \odot e_3) = \beta e_2 \odot e_3  \\
        & e_5 \cdot (e_1 \odot e_3) = (1+\beta) e_1 \odot e_3 &   e_5 \cdot (e_3 \odot e_3) = 2\beta e_3 \odot e_3  .
    \end{align*}
    Thus we get that 
    \begin{equation*}
        \operatorname{Kill}(\operatorname{R}_{\exp} \mathfrak g^{\alpha, \beta}_{5,33})_0 =
        \begin{cases}
        \langle e_2 \odot e_3 \rangle & (\alpha,\beta) = (-1,0) \\
        0 & \text{otherwise}.
        \end{cases}
    \end{equation*}
    (The case $(\alpha, \beta) = (-1,-1)$ is treated in \cite[1.5.2]{CoTesDehn}.)
    Note that when $\alpha>0$, the fact that $\delta_G(n) \asymp n^2$ follows already from the Azencott-Wilson criterion.
\end{example}

\bibliographystyle{alpha}
\bibliography{gabriel}

\end{document}